\newcommand{\BRANCHEDFACTOR}{0.85}
\newcommand*{\relrelbarsep}{.386ex}
\newcommand*{\relrelbar}{%
  \mathrel{%
    \mathpalette\@relrelbar\relrelbarsep
  }%
}
\newcommand*{\@relrelbar}[2]{%
  \raise#2\hbox to 0pt{$\m@th#1\relbar$\hss}%
  \lower#2\hbox{$\m@th#1\relbar$}%
}
\providecommand*{\rightrightarrowsfill@}{%
  \arrowfill@\relrelbar\relrelbar\rightrightarrows
}
\providecommand*{\leftleftarrowsfill@}{%
  \arrowfill@\leftleftarrows\relrelbar\relrelbar
}
\providecommand*{\xrightrightarrows}[2][]{%
  \ext@arrow 0359\rightrightarrowsfill@{#1}{#2}%
}
\providecommand*{\xleftleftarrows}[2][]{%
  \ext@arrow 3095\leftleftarrowsfill@{#1}{#2}%
}
\newcommand{\mb}{\overline{\mathcal M}}
\newcounter{problemcounter}
\newcounter{subproblemcounter}
\newcommand{\Z}{\ensuremath{\mathbb{Z}}}
\newcommand{\Aut}{\text{Aut}}
\newcommand{\Hb}{\overline{\mathcal H}}
\renewcommand{\a}{\mathfrak a}
\renewcommand{\b}{\mathfrak b}
\renewcommand{\P}{\mathbb P}
\theoremstyle{definition}
\newtheorem{thm}{Theorem}[section]
\newtheorem{dfn}[thm]{Definition}
\newtheorem{prop}[thm]{Proposition}
\newtheorem{lem}[thm]{Lemma}
\newtheorem{eg}[thm]{Example}
\definecolor{myblue}{RGB}{120,94,240}
\definecolor{mygreen}{RGB}{255,194,10}
\definecolor{myred}{RGB}{0,158,115}
\definecolor{mypink}{RGB}{220,38,127}
\title{Reverse Hurwitz counts of genus $1$ curves}
\author{Michael Mueller}
\begin{document}

\maketitle

\abstract{
    In this paper, we study a problem that is in a sense a reversal of the Hurwitz
counting problem. The Hurwitz problem asks: for a
generic {\it target} --- $\P^1$ with a list of $n$ points  $q_1,\dots,q_n\in \P^1$ --- and
partitions $\sigma_1,\dots,\sigma_n$ of $d$, how many degree
$d$ covers $C\to\P^1$ are there with specified ramification $\sigma_i$ over $q_i$?
We ask: for a generic {\it source} --- an $r$-pointed curve
$(C,p_1,\dots,p_r)$ of genus $1$ --- and
partitions $\mu,\sigma_1,\dots,\sigma_n$ of $d$ with $\ell(\mu)=r$,
how many degree $d$ covers $C\to\P^1$ are there with ramification profile $\mu$ over $0$
corresponding to a fiber $\{p_1,\dots,p_r\}$ and
elsewhere ramification profiles $\sigma_1,\dots,\sigma_n$?

While the enumerative invariants we study bear a similarity to
generalized Tevelev degrees, they are more difficult to express in
closed form in general. Nonetheless, we establish key results:
after proving a closed form result in the case where
the only non-simple unmarked ramification profiles $\sigma_1$ and $\sigma_2$ are ``even'' (consisting of $2,\dots,2$), we go on to establish
recursive formulas to compute invariants where each unmarked ramification
profile is of the form $(x,1,\dots,1)$.
A special case asks: given a generic $d$-pointed genus $1$ curve $(E,p_1,\dots,p_d)$, how many degree $d$ covers $(E,p_1,\dots,p_d)\to(\P^1,0)$ are there with $d-2$ unspecified points of $E$ having ramification index $3$? We show that the answer is an explicit quartic in $d$.

}

\tableofcontents

\section{Introduction}

\subsection{Admissible covers}

We pose the question: given an $n$-pointed curve
$(C,p_1,\dots,p_r)$ of genus $g$ and
partitions $\mu,\sigma_1,\dots,\sigma_n$ of $d$ with $\ell(\mu)=r$,
how many degree $d$ covers $C\to\P^1$ are there with ramification profile $\mu$ over $0$
corresponding to a fiber $\{p_1,\dots,p_r\}$ and
elsewhere ramification profiles $\sigma_1,\dots,\sigma_n$?
In this paper we mostly specialize to the case $g=1$.

\begin{eg}
  \label{eg:firsteg}
Given a generic elliptic curve $(E,p)$, how many degree $4$ covers $E\to\P^1$ are there
with $p$ fully ramified over $0$ and ramification profiles $(2,2)$, $(2,2)$ and $(2,1,1)$
elsewhere?
(See Figure~\ref{fig:firsteg}; only the pink, bolded point $p$ is fixed.)
\end{eg}

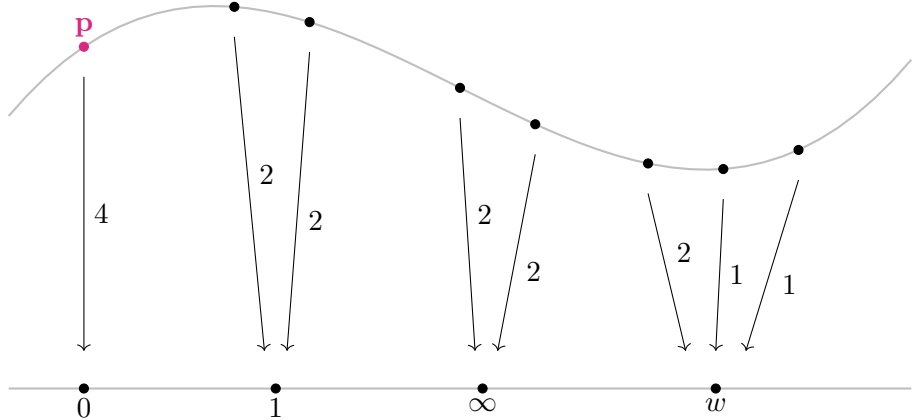
\begin{figure}[h]
  \caption{Degree $4$ covers $(E,p)\to(\P^1,0)$ with full ramification at $p$ and ramification profiles $(2,2)$, $(2,2)$ and $(2,1,1)$ elsewhere.}
  \centering
\begin{tikzpicture}
  \draw[domain=-6:6,samples=50,color=gray!50,thick] plot (\x, \x^3/64 - \x/2);
  \foreach \x/\p/\l in {-5/$\mathbf{p}$/4}
  {
    \filldraw[mypink] (\x,\x^3/64 - \x/2) circle (0.6mm) node[above] {\p};
    \draw[->] (\x,\x^3/64 - \x/2 - .4) to["$\l$"] (-5, -3.5);
  }
  \foreach \x/\p/\l in {-3//2, -2//2}
  {
    \filldraw[black] (\x,\x^3/64 - \x/2) circle (0.6mm) node[above] {\p};
    \draw[->] (\x,\x^3/64 - \x/2 - .4) to["$\l$"] (-1.7+.3*\x, -3.5);
  }
  \foreach \x/\p/\l in {0//2, 1//2}
  {
    \filldraw[black] (\x,\x^3/64 - \x/2) circle (0.6mm) node[above] {\p};
    \draw[->] (\x,\x^3/64 - \x/2 - .4) to["$\l$"] (.2+.3*\x, -3.5);
    }
  \foreach \x/\p/\l in {2.5//2, 3.5//1, 4.5//1}
  {
    \filldraw[black] (\x,\x^3/64 - \x/2) circle (0.6mm) node[above] {\p};
    \draw[->] (\x,\x^3/64 - \x/2 - .4) to["$\l$"] (2+.4*\x, -3.5);
    }

    \draw[domain=-6:6,samples=50,color=gray!50,thick] plot (\x, -4);
  \foreach \x/\q in {-5/$0$, -2.45/$1$, .3/$\infty$, 3.4/$w$}
    \filldraw[black] (\x,-4) circle (0.6mm) node[below] {\q};
\end{tikzpicture}

\label{fig:firsteg}
\end{figure}

As we will need to count covers,
we begin by reviewing some relevant background in the theory of
(admissible) covers and enumerative counts known in the literature.

Recall the moduli of stable $n$-pointed genus $g$ curves, known as $\overline{\mathcal M}_{g,n}$ (see \cite{Moduli} for background).
The properties of stable pointed curves are often reflected in their associated dual graphs (defined in \cite{Vakil}, p.\ 6).

In order to count covers $C\to\P^1$, it will be useful
to have a compact moduli space of covers of $\P^1$ to work with.
There are multiple ways to compactify moduli of smooth covers;
one common one is the moduli of stable maps $\overline{\mathcal M}_{g,n}(\P^1,\beta)$
(see \cite{StableMaps} for notes on the subject). While this moduli
space is frequently used in Gromov-Witten theory,
we will not use it here.
As in \cite{Generalized} we will use the {\it moduli of admissible covers}, originally
treated in \cite{Admissible}. Admissible covers are defined in \cite{Moduli} (Definition 3.149).
Letting
$\sigma=(\sigma_1,\dots,\sigma_n)$ be a list of
partitions of a fixed integer $d$, we consider the
Hurwitz moduli space $\Hb_{g,\sigma}$ consisting of admissible covers
$C\to X$ and marked points $p_1,\dots,p_N\in C$ where $C$ is
of genus $g$, $X$ is of genus $0$,
$N=\sum_{i=1}^n\ell(\sigma_i)$, and the marked points
comprise the ramified fibers with profiles given by $\sigma_1,\dots,\sigma_n$.

For an admissible cover of curves $C\to X$, there are associated dual
graphs $\Gamma$ to $C$ and $\Gamma'$ to $X$. Between these two graphs
there is a type of map, known as an admissible cover of graphs:

\begin{dfn}[\cite{Lian}, Definition 2.10]
  An admissible cover of stable graphs $\gamma:\Gamma\to\Gamma'$
  of degree $d$ is a collection of maps on vertices, half-edges, and
  legs compatible with all of the attachment
  data, in addition to the data of a degree
  $d_v$ at each vertex $v\in V(\Gamma)$,
  and a ramification index $d_e$ at each
  edge $e\in E(\Gamma)$ such that:
  \begin{itemize}
  \item if $v\in V(\Gamma)$ and $h'\in H(\Gamma')$
    is a half-edge attached to $\gamma_V(v)$, then
    the sum of the ramification indices
    at the half-edges attached to $v$ living
    over $h'$ is equal to $d_v$, and
  \item
    if $v'\in V(\Gamma')$, then the sum
    of the degrees at vertices over $v'$ is
    equal to $d$.
    \end{itemize}
\end{dfn}

\begin{eg}

  We depict an admissible map of graphs $\Gamma\to\Gamma'$, where the upper three vertices are
  in graph $\Gamma$ and the lower two vertices are in graph $\Gamma'$.

  \begin{figure}[h]
  \caption{Illustration of an admissible cover of graphs.}
  \centering
    \scalebox{\BRANCHEDFACTOR}{
      \begin{tikzpicture}[thick,amat/.style={matrix of nodes,nodes in empty cells,
  row sep=2.5em,rounded corners,
  nodes={draw,solid,circle,minimum size=1.0cm}},
  dmat/.style={matrix of nodes,nodes in empty cells,row sep=2.5em,nodes={minimum size=1.0cm},draw=myred},
  fsnode/.style={fill=myblue},
  ssnode/.style={fill=mygreen}]

                                      \matrix[amat,nodes=fsnode] (mat1) {$1$\\
                                      $0$\\};

  \matrix[dmat,left=1cm of mat1] (degrees1) {$2$\\
  $1$\\};

 \matrix[amat,right=4cm of mat1,nodes=ssnode] (mat2) {$0$\\};

 \matrix[dmat,right=1cm of mat2] (degrees2) {$3$\\};

 \draw  (mat1-1-1) edge["$2$"] (mat2-1-1)
 (mat1-2-1) edge["$1$"] (mat2-1-1);

 \draw (mat1-2-1) -- +(160:.7) node[anchor=east] {$p_1$}
 (mat1-2-1) -- +(45:.7) node[anchor=west] {$p_2$};

 \draw (mat2-1-1) -- +(345:.7) node[anchor=west] {$p_{3}$};

 \matrix[amat,nodes=fsnode,below=2cm of mat1] (mat3) {$0$\\};

 \matrix[amat,nodes=ssnode,below=3cm of mat2] (mat4) {$0$\\};

  \draw (mat3-1-1) -- +(160:.7) node[anchor=east] {$q_1$}
  (mat3-1-1) -- +(45:.7) node[anchor=west] {$q_2$};
   
  \draw (mat4-1-1) -- +(345:.7) node[anchor=west] {$q_3$};

 \draw  (mat3-1-1) edge[] (mat4-1-1);

 \draw  (mat1-2-1) edge[->,shorten <= 0.3cm, shorten >= 0.3cm] (mat3-1-1);
 \draw  (mat2-1-1) edge[->,shorten <= 0.3cm, shorten >= 0.3cm] (mat4-1-1);

      \end{tikzpicture}}
  \end{figure}
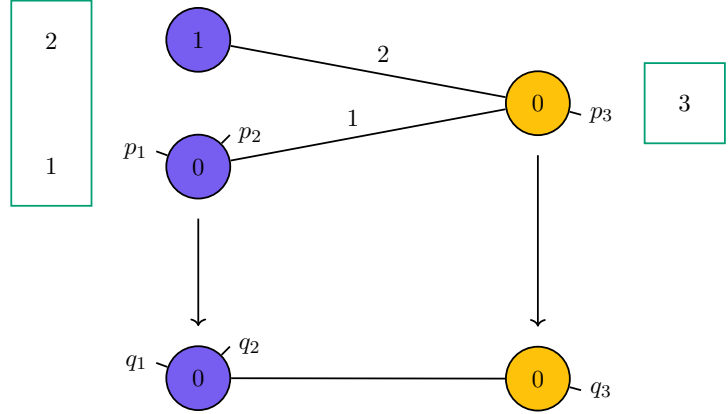

                                      The numbers in green boxes are the degrees $d_v$ for each vertex of the
                                      source, while the labels of edges
                                      are the ramification indices $d_e$.
                                      Note that the sum of degrees of
                                      vertices over the purple target
                                      vertex is $2+1=3$. (In future, we will omit the arrows, and use the convention that vertices in the source map to the vertex in
  the target of the same color. When no target is drawn, we assume it can be described by a purple and an orange vertex with an edge between them.)

\end{eg}

We will also refer later to the combinatorial concept
of an $A$-structure on $\Gamma$ where $A$ and $\Gamma$
are stable graphs with the same leg set, to describe
specializations of a curve. This notion is
defined in \cite{Schmitt} (Definition 2.5).

\subsection{Hurwitz theory}

For partitions $\sigma_1,\dots,\sigma_n$ of $d$
satisfying the Riemann-Hurwitz condition, there is a natural map
\[
\epsilon_0:\Hb_{g,\sigma}\to\overline{\mathcal M}_{0,n}
\]
Hurwitz theory (\cite{Cela}) tells us that this map is finite and we have a marked connected Hurwitz number
\[\overline H^{connected}(\sigma_1,\dots,\sigma_n)=\text{deg}(\epsilon_0)\]
The unmarked Hurwitz number is obtained by dividing by automorphisms of the ramification points upstairs:
\[
H^{connected}(\sigma_1,\dots,\sigma_n)=\frac{\overline H^{connected}(\sigma_1,\dots,\sigma_n)}{|\Aut(\sigma_1,\dots,\sigma_n)|}
\]
There is a more general notion of Hurwitz number $H(\sigma_1,\dots,\sigma_n)$, in which the source of
the cover may be disconnected.
The following well-known characterization can be used to produce the results
 in Appendix~\ref{appendix:hurwitz}:
\begin{prop}[\cite{Completed}, p.\ 523]
  Let $\sigma_1,\dots,\sigma_n$ be partitions of $d$. The Hurwitz number $H(\sigma_1,\dots,\sigma_n)$ is given by
  \[
  H(\sigma_1,\dots,\sigma_n)=\frac 1{d!}\cdot |\{(s_1,\dots,s_n)\in S_d^n:s_i\text{ has cycle type }\sigma_i,\ s_1\cdots s_n=1\in S_d\}|
  \]
  and the connected version $H^{connected}(\sigma_1,\dots,\sigma_n)$ is defined
  similarly, except that the subgroup of $S_d$ generated by $s_1,\dots,s_n$
  must act transitively on $\{1,\dots,d\}$.
\end{prop}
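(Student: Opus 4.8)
The plan is to deduce this from the classical dictionary between branched covers of $\P^1$ and tuples of permutations --- the Riemann Existence Theorem together with monodromy. Since the Hurwitz number is the degree of the finite map $\epsilon_0:\Hb_{g,\sigma}\to\overline{\mathcal M}_{0,n}$ (divided by automorphisms of the ramification data) and $\overline{\mathcal M}_{0,n}$ is connected, it suffices to compute the fiber over one convenient point, i.e.\ over a fixed generic configuration $q_1,\dots,q_n\in\P^1$. Fix a basepoint $x_0$ and set $U=\P^1\setminus\{q_1,\dots,q_n\}$. The first step is to recall the standard presentation of $\pi_1(U,x_0)$ by loops $\gamma_1,\dots,\gamma_n$, where $\gamma_i$ encircles $q_i$, subject to the single relation $\gamma_1\cdots\gamma_n=1$; thus $\pi_1(U,x_0)$ is free of rank $n-1$, and a homomorphism $\rho:\pi_1(U,x_0)\to S_d$ is the same thing as a tuple $(s_1,\dots,s_n)\in S_d^n$ with $s_1\cdots s_n=1$.

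Next I would set up the monodromy correspondence. A degree $d$ topological cover of $U$ together with a bijection of its fiber over $x_0$ with $\{1,\dots,d\}$ is the same data as such a homomorphism $\rho$, with $s_i:=\rho(\gamma_i)$ the monodromy permutation around $q_i$; forgetting the bijection replaces the tuple by an arbitrary simultaneous conjugate. Over a small punctured disc around $q_i$ the cover is a disjoint union of cyclic covers $z\mapsto z^k$, one per cycle of $s_i$ of length $k$, so the cover extends canonically across $q_i$ to a branched cover of compact surfaces whose ramification profile over $q_i$ is the cycle type of $s_i$; by the Riemann Existence Theorem (equivalently, by normalizing $\P^1$ in the function field of the cover) this extends uniquely to a finite morphism $C\to\P^1$ of smooth projective curves, and conversely every such morphism branched only over the $q_i$ arises this way. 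The cover is connected exactly when $\langle s_1,\dots,s_n\rangle$ acts transitively on $\{1,\dots,d\}$, and the deck group $\Aut(C\to\P^1)$ is identified with the group of $\pi_1$-equivariant automorphisms of the fiber, namely the centralizer $Z_{S_d}(\langle s_1,\dots,s_n\rangle)$.

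Now I would do the counting. Let $T=\{(s_1,\dots,s_n):s_i\text{ of cycle type }\sigma_i,\ s_1\cdots s_n=1\}$, on which $S_d$ acts by simultaneous conjugation. By the previous paragraph the $S_d$-orbits on $T$ are the isomorphism classes of covers $C\to\P^1$ with the prescribed branching, and the stabilizer of a tuple $\tau$ is precisely $Z_{S_d}(\langle\tau\rangle)=\Aut(C_\tau\to\P^1)$; so by orbit--stabilizer each orbit has size $d!/|\Aut(C_\tau\to\P^1)|$, and summing over orbits gives $|T|=d!\sum_{C\to\P^1}1/|\Aut(C\to\P^1)|$. Restricting to transitive tuples gives the same identity with only connected covers on the right. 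The last step is to match $\sum 1/|\Aut(C\to\P^1)|$ with the paper's Hurwitz number: the space $\Hb_{g,\sigma}$ additionally records a labeling of the $\ell(\sigma_i)$ points in the fiber over $q_i$, so the (stacky) fiber of $\epsilon_0$ over $(q_1,\dots,q_n)$ counts covers-with-labeled-ramification weighted by their automorphisms, and dividing by $|\Aut(\sigma_1,\dots,\sigma_n)|=\prod_i|\Aut(\sigma_i)|$ exactly strips off the labeling data, yielding $H^{connected}(\sigma)=\sum_{\text{connected }C\to\P^1}1/|\Aut|$ and hence the claimed formula; the disconnected $H(\sigma)$ is by definition the analogous weighted count over all covers.

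I expect the only real obstacle to be this last bookkeeping step: getting the factor $1/d!$ and the quotient by $|\Aut(\sigma_1,\dots,\sigma_n)|$ to line up exactly with the paper's definitions of $\overline H^{connected}$, $H^{connected}$ and $H$ --- in particular, being careful about whether the fiber of $\epsilon_0$ is counted with stacky automorphism weights, and reconciling ``automorphisms of the labeled admissible cover'' with ``centralizer of the monodromy tuple.'' Everything else (the presentation of $\pi_1$, filling in branch points, the transitivity criterion, orbit--stabilizer) is routine, and independence of the count on the configuration $(q_1,\dots,q_n)$ is immediate from finiteness of $\epsilon_0$ and connectedness of $\overline{\mathcal M}_{0,n}$.
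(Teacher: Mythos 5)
Your argument is correct, but there is nothing in the paper to compare it against: the paper does not prove this proposition at all, it simply quotes it from the cited reference (\cite{Completed}) as a known characterization. What you have written is the classical Hurwitz/monodromy proof --- the presentation of $\pi_1(\P^1\setminus\{q_1,\dots,q_n\})$, the Riemann Existence Theorem to pass between tuples $(s_1,\dots,s_n)$ with $s_1\cdots s_n=1$ and branched covers, transitivity for connectedness, and orbit--stabilizer for the simultaneous conjugation action, identifying the stabilizer with the deck group --- and all of these steps are sound. The one place where, as you yourself note, care is needed is the final bookkeeping: the stacky degree of $\epsilon_0$ counts covers with \emph{labeled} ramification points weighted by $1/|\Aut|$, and the passage to the unlabeled count is another orbit--stabilizer argument, applied to the action of $\Aut(C\to\P^1)$ on the set of $\prod_i|\Aut(\sigma_i)|$ labelings of the ramified fibers; this gives $\overline H^{connected}(\sigma)=\prod_i|\Aut(\sigma_i)|\cdot\sum_C 1/|\Aut(C\to\P^1)|$, so dividing by $|\Aut(\sigma_1,\dots,\sigma_n)|=\prod_i|\Aut(\sigma_i)|$ does exactly what you claim, and your identification $H^{connected}(\sigma)=\sum_C 1/|\Aut(C\to\P^1)|=\frac1{d!}|T^{\mathrm{transitive}}|$ follows. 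So your proposal is a complete and essentially standard proof of a statement the paper leaves to the literature.
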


\subsection{Generalized Tevelev degree}

Whereas Hurwitz numbers count covers of $\P^1$ with fixed branch points,
{\it generalized Tevelev degrees} \cite{Generalized} fix
a pointed source curve and certain branch points of the target $\P^1$ before counting.
For instance:

\begin{eg}
  \label{eg:tevelev}
  Fix a generic genus $1$ curve $C$ with points $p_1,\dots,p_6\in C$. Also fix generic points
  $q_1,\dots,q_4\in\P^1$. We can consider degree $4$ maps $C\to\P^1$ where $p_1$ and $p_2$ are unramified points in the same fiber over $q_1$, $p_3$ and $p_4$ are unramified points in the same fiber over $q_2$, and $p_5$ and $p_6$ are ramified of index $2$ over the points $q_3$ and $q_4$ respectively.

  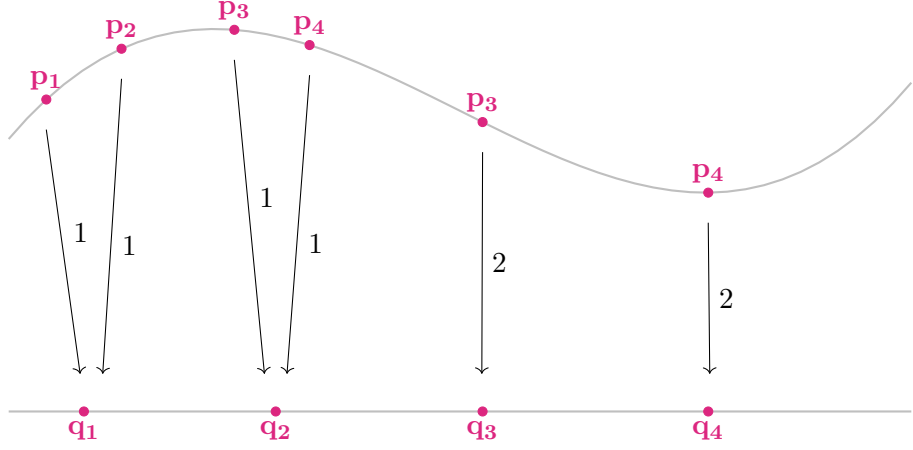
\begin{figure}[h]
  \caption{Illustration of the covers in Example~\ref{eg:tevelev}. Note that each depicted fiber has two unramified points not shown, and there are $5$ simply branched points not shown. Each pink, bolded point is fixed.}
  \centering
  \begin{tikzpicture}
  \draw[domain=-6:6,samples=50,color=gray!50,thick] plot (\x, \x^3/64 - \x/2);
  \foreach \x/\p/\l in {-5.5/$\mathbf{p_1}$/1, -4.5/$\mathbf{p_2}$/1}
  {
    \filldraw[mypink] (\x,\x^3/64 - \x/2) circle (0.6mm) node[above] {\p};
    \draw[->] (\x,\x^3/64 - \x/2 - .4) to["$\l$"] (-3.4+.3*\x, -3.5);
  }
  \foreach \x/\p/\l in {-3/$\mathbf{p_3}$/1, -2/$\mathbf{p_4}$/1}
  {
    \filldraw[mypink] (\x,\x^3/64 - \x/2) circle (0.6mm) node[above] {\p};
    \draw[->] (\x,\x^3/64 - \x/2 - .4) to["$\l$"] (-1.7+.3*\x, -3.5);
  }
  \foreach \x/\p/\l in {0.3/$\mathbf{p_3}$/2}
  {
    \filldraw[mypink] (\x,\x^3/64 - \x/2) circle (0.6mm) node[above] {\p};
    \draw[->] (\x,\x^3/64 - \x/2 - .4) to["$\l$"] (.2+.3*\x, -3.5);
    }
  \foreach \x/\p/\l in {3.3/$\mathbf{p_4}$/2}
  {
    \filldraw[mypink] (\x,\x^3/64 - \x/2) circle (0.6mm) node[above] {\p};
    \draw[->] (\x,\x^3/64 - \x/2 - .4) to["$\l$"] (2+.4*\x, -3.5);
    }

    \draw[domain=-6:6,samples=50,color=gray!50,thick] plot (\x, -4);
  \foreach \x/\q in {-5/$\mathbf{q_1}$, -2.45/$\mathbf{q_2}$, .3/$\mathbf{q_3}$, 3.3/$\mathbf{q_4}$}
    \filldraw[mypink] (\x,-4) circle (0.6mm) node[below] {\q};
\end{tikzpicture}

\label{fig:tevelev}
\end{figure}

  In the notation of \cite{Generalized} we have $g=1$, $d=4$, $\ell=2$, $k=4$, $\mu_1=\mu_2=(1,1)$, $\mu_3=\mu_4=(2)$ and $n=6$. By Theorem 9 of that paper, the enumerative count is
  \[
  \text{Tev}_{1,2,(1,1),(1,1),(2),(2)}=2^1=2.
  \]
\end{eg}

\subsection{Problem statement}

In order to study the problem posed in the abstract, we will find it useful to set up a more general problem, in which
multiple fibers in the source curve may contain fixed points:

Fix a positive integer $d$, a nonnegative integer $g$, ordered partitions $\sigma_1,\dots,\sigma_n$ of $d$, and subpartitions $\nu_1,\dots,\nu_n$ (i.e., $\nu_i$ is a subset of $\sigma_i$).
Let $\Hb_{g,\sigma}$ be the moduli space consisting of degree $d$ admissible covers $C\to X$ where $C$ is a genus $g$ prestable curve and $X$ is a genus $0$ prestable curve, marked points $p_{i,j}\in C$ for every $i\in\{1,\dots,n\}$ and
$j\leq \ell(\sigma_i)$, 
marked points $q_1,\dots,q_n\in X$ such that
the fiber over $q_i$ is $\{p_{i,1},\dots,p_{i,\ell(\sigma_i)}\}$ with
ramification profile $\sigma_i$, and the map is \'etale away from $q_1,\dots,q_n$ and any nodes of the target. Our situation is described in Figure \ref{fig:hurwitz}.

Let $m=\sum\ell(\nu_i)$ and $N=\sum\ell(\sigma_i)$.
We have natural maps:

~\\
\begin{figure}[h]
  \caption{Maps from the Hurwitz moduli space}
  \centering
  \[\begin{tikzcd}
	{\overline{\mathcal H}_{g,\sigma}} & {\overline{\mathcal M}_{g,N}} \\
	{\overline{\mathcal M}_{0,n}} & {\overline{\mathcal M}_{g,m}}
	\arrow["{\lambda_g}", from=1-1, to=1-2]
	\arrow["{\epsilon_0}"', from=1-1, to=2-1]
	\arrow["{\pi_{\nu}}"', from=1-1, to=2-2]
	\arrow["\iota", from=1-2, to=2-2]
\end{tikzcd}\]
\label{fig:maps}
\end{figure}

Here $\pi_{\nu}$ remembers the source curve and the points corresponding to $\nu$; $\epsilon_0$ remembers the target with its marked
branch points; $\lambda_g$ remembers the source curve and all points
corresponding to $\sigma$; and $\iota$ is the forgetful map, which forgets
the points of the source corresponding to $\sigma-\nu$ and stabilizes.

\begin{lem}
  \label{lem:dim}
  The source and target of $\pi_{\nu}$ are of the same dimension when $3g+m=n$,
  and thus if $\pi_{\nu}$ is finite this condition must hold.
\end{lem}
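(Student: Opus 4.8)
The plan is to compute the dimensions of the source and target of $\pi_{\nu}$ directly and compare. The target is easy: $\dim \mb_{g,m}=3g-3+m$ as usual. For the source $\Hb_{g,\sigma}$, I would invoke the finiteness of
$\epsilon_0\colon \Hb_{g,\sigma}\to\mb_{0,n}$
recalled above from Hurwitz theory. Over a point $[X,q_1,\dots,q_n]\in\mb_{0,n}$, the fiber of $\epsilon_0$ is the (finite, by Hurwitz theory) set of admissible covers of $X$ branched over the $q_i$ with profiles $\sigma_i$, together with a labeling of the $\ell(\sigma_i)$ points of each special fiber by $p_{i,1},\dots,p_{i,\ell(\sigma_i)}$; since only finitely many labelings occur, $\epsilon_0$ is finite, so $\dim\Hb_{g,\sigma}=\dim\mb_{0,n}=n-3$. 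I would also note here that $\Hb_{g,\sigma}$ is nonempty with a component of this expected dimension precisely because the $\sigma_i$ are assumed to satisfy Riemann--Hurwitz, so Hurwitz existence applies and $\epsilon_0$ is in fact surjective onto $\mb_{0,n}$.

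Comparing, $\dim\Hb_{g,\sigma}=\dim\mb_{g,m}$ holds if and only if $n-3=3g-3+m$, i.e. $3g+m=n$, which is the first assertion.

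For the ``and thus'' clause I would argue as follows. A finite morphism is closed and does not raise dimension: for each irreducible component $Z$ of $\Hb_{g,\sigma}$, the restriction $\pi_{\nu}|_Z$ is finite, so $\pi_{\nu}(Z)$ is an irreducible closed subset of $\mb_{g,m}$ of dimension $\dim Z$; hence finiteness already forces $\dim\Hb_{g,\sigma}\le\dim\mb_{g,m}$. Conversely, the enumerative count in question is $\deg(\pi_{\nu})$, which vanishes unless $\pi_{\nu}$ dominates $\mb_{g,m}$; a finite morphism with dense image is surjective, so a finite $\pi_{\nu}$ producing a nonzero count satisfies $\pi_{\nu}(\Hb_{g,\sigma})=\mb_{g,m}$, giving the reverse inequality and hence equality $3g+m=n$. (Equivalently, if $3g+m\ne n$ then over a generic pointed curve in $\mb_{g,m}$ there are either no covers at all or a positive-dimensional family, so no finite count exists.)

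I do not expect a genuine obstacle here. The only points needing care are the precise meaning attached to ``$\pi_{\nu}$ is finite'' — I would make explicit that finiteness alone gives one inequality and that surjectivity (needed for a nonzero count) gives the other — and the assertion that $\Hb_{g,\sigma}$ carries a component of dimension exactly $n-3$ rather than something smaller, which is exactly where one leans on the Hurwitz-theoretic input that $\epsilon_0$ is not merely finite but surjective.
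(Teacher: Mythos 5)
Your proposal is correct and follows essentially the same route as the paper: both use the finiteness of $\epsilon_0$ from Hurwitz theory to get $\dim\Hb_{g,\sigma}=n-3$ and compare with $\dim\mb_{g,m}=3g-3+m$. Your extra care about the ``and thus'' clause (finiteness giving one inequality, dominance/surjectivity for a nonzero degree giving the other) is a reasonable elaboration of a point the paper's one-line proof leaves implicit.
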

\begin{proof}
  By Hurwitz theory, we know that the map $\epsilon_0$ is finite, and so $\dim(\Hb_{\sigma,\mu})=n-3$. The condition that
  $\dim(\Hb_{\sigma,\mu})=\dim(\mb_{g,m})$ is equivalent to
  \[
  3g-3+m=n-3\iff 3g+m=n.
  \]
\end{proof}

\begin{figure}[h]
  \caption{Depiction of a Hurwitz cover with notation used in this paper. Pink, bolded points are the ones fixed when counting $N_{g,\sigma,\nu}$.}
  \centering
\begin{tikzpicture}
  \draw[domain=-6:6,samples=50,color=gray!50,thick] plot (\x, \x^3/64 - \x/2);
  \foreach \x/\p/\c in {-5.5/$\mathbf{p_{1,1}}$/mypink, -4.5/$\mathbf{p_{1,\ell(\nu_1)}}$/mypink, -3/$p_{1,\ell(\sigma_1)}$/black}
  {
    \filldraw[\c] (\x,\x^3/64 - \x/2) circle (0.6mm) node[above] {\p};
    \draw[->] (\x,\x^3/64 - \x/2 - .4) -- (-3.15+.3*\x, -3.5);
    }
  \foreach \x/\p/\c in {2.5/$\mathbf{p_{n,1}}$/mypink, 3.5/$\mathbf{p_{n,\ell(\nu_n)}}$/mypink, 4.5/$p_{n,\ell(\sigma_n)}$/black}
  {
    \filldraw[\c] (\x,\x^3/64 - \x/2) circle (0.6mm) node[above] {\p};
    \draw[->] (\x,\x^3/64 - \x/2 - .4) -- (2+.3*\x, -3.5);
    }

    \draw[domain=-6:6,samples=50,color=gray!50,thick] plot (\x, -4);
  \foreach \x/\q in {-4.5/$q_1$, 3/$q_n$}
    \filldraw[black] (\x,-4) circle (0.6mm) node[below] {\q};
\end{tikzpicture}

\label{fig:hurwitz}
\end{figure}
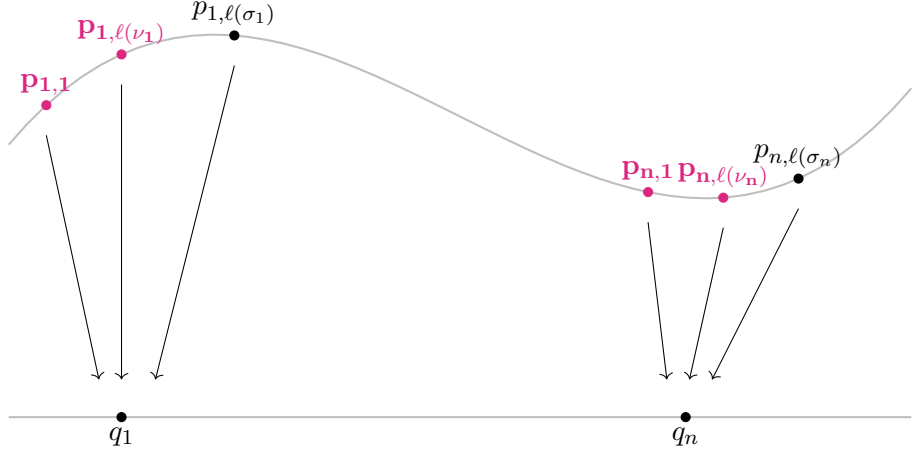

When the condition of Lemma~\ref{lem:dim} is satisfied, we can define:
\begin{dfn}
  For a genus $g$, list of partitions $\sigma$ of the
  same positive integer $d$, and subpartitions $\nu$,
\[
W_{g,\sigma,\nu}=\deg(\pi_{\nu})
\]
where this is an orbifold degree.
\end{dfn}
In order to account for relabeling of marked points (any of the points $q_n$ with identical data, as well as any of the unmarked points in fibers with
identical data), we define
\[
N_{g,\sigma,\nu}=\frac{W_{g,\sigma,\nu}}{|\Aut(\sigma-\nu)|}
\]
where $\Aut(\sigma-\nu)$ denotes automorphisms of $\sigma_i-\nu_i$ for each $i$ together with automorphisms of $\{\sigma_i-\nu_i\}$, i.e.
\[
|\Aut(\sigma-\nu)|=|\Aut(\{(\sigma_i,\nu_i):1\leq i\leq n\})|\cdot \prod_i|\Aut(\sigma_i-\nu_i)|.
\]
\begin{eg}
  When $g=1$, $\sigma=((2,2),(3,1),(2,2),(2,1,1),(2,1,1))$ and $\nu=((2,2),\emptyset,\emptyset,\emptyset,\emptyset)$, we have
  \[
  N_{g,\sigma,\nu}=\frac{W_{g,\sigma,\nu}}{|\Aut(3,1)|\cdot |\Aut(2,2)|\cdot |\Aut(2,1,1)|^2\cdot |\Aut((3,1),(2,2),(2,1,1),(2,1,1))|}=\frac{W_{g,\sigma,\nu}}{1\cdot 2\cdot 2^2\cdot 2}
  \]
  Here $\Aut(x_1,\dots,x_k)$ consists of permutations $\tau:\{1,\dots,k\}\to\{1,\dots,k\}$ such that
  $x_{\tau(i)}=x_i$.
  \end{eg}

One way to think about $N_{g,\sigma,\nu}$ is as follows: the map
$\pi_{\nu}:\Hb_{g,\sigma}\to\mb_{g,m}$ factors through
$\Hb_{g,\sigma}/\Aut(\sigma-\nu)$, and $N_{g,\sigma,\nu}$ is the degree of the map
\[
\Hb_{g,\sigma}/\Aut(\sigma-\nu)\to\mb_{g,m}.
\]

We will assume when not otherwise specified that $g=1$, and will also often deal with the case where
$\sigma_1=\nu_1=\mu$ and $\nu_3=\nu_4=\dots=\emptyset$.
\begin{dfn}
  Write
\[
W_{\sigma_1,\dots,\sigma_n}(\mu)=W_{1,(\mu,\sigma_1,\dots,\sigma_n),(\mu,\emptyset,\dots,\emptyset)}
\]
for the relevant labeled invariant, and
\[
N_{\sigma_1,\dots,\sigma_n}(\mu)=N_{1,(\mu,\sigma_1,\dots,\sigma_n),(\mu,\emptyset,\dots,\emptyset)}
\]
for the unlabeled invariant.\end{dfn}
In particular,
\[
N_{\sigma_1,\dots,\sigma_n}(\mu)=\frac{W_{\sigma_1,\dots,\sigma_n}(\mu)}{|\Aut(\sigma_2,\dots,\sigma_n)|\cdot\prod_{i=2}^n|\Aut(\sigma_i)|}
\]
If $\mu,\sigma_1,\dots,\sigma_n$ do not have ramification sufficient to satisfy Riemann-Hurwitz,
we let \[N_{\sigma_1,\dots,\sigma_n}(\mu) = N_{\sigma_1,\dots,\sigma_n,(2,1^{d-2}),\dots}(\mu).\]
The primary task of this paper is to understand the invariants $N_{\sigma_1,\dots,\sigma_n}(\mu)$
in some significant special cases.

\subsection{Comparison of invariants}

Hurwitz numbers, generalized Tevelev degrees, and our invariants $N_{g,\sigma,\nu}$
all enumerate covers of $\P^1$. We review the differences between them, using the following diagram as reference:

\[\begin{tikzcd}[ampersand replacement=\&]
	{\overline{\mathcal H}_{g,\sigma}} \& {\overline{\mathcal M}_{g,N}} \\
	{\overline{\mathcal M}_{0,n}} \& {\overline{\mathcal M}_{g,m}}
	\arrow["{\lambda_g}", from=1-1, to=1-2]
	\arrow["{\epsilon_0}"', from=1-1, to=2-1]
	\arrow["{\pi_{\nu}}"', from=1-1, to=2-2]
	\arrow["\iota", from=1-2, to=2-2]
\end{tikzcd}\]

\begin{itemize}
\item Hurwitz numbers ($H=\deg(\epsilon_0)$) count covers with a fixed {\bf target};
\item Generalized Tevelev degrees count covers with a partially fixed {\bf target} and a partially fixed {\it source};
\item Our invariants $N_{g,\sigma,\nu}=\frac{\deg(\pi_{\nu})}{|\Aut(\sigma-\nu)|}$ count Hurwitz covers with a partially fixed {\it source}.
  
\end{itemize}

Section 1.5 of \cite{Cela} notes that
Hurwitz numbers and generalized Tevelev degrees both give partial information about the pushforward $\tau_*[\overline{\mathcal H}_{g,\sigma}]$, where
                                                                                                                                                                                                                            $\tau=(\lambda_g,\epsilon_0):\overline{\mathcal H}_{g,\sigma}\to\overline{\mathcal M}_{g,N}\times\overline{\mathcal M}_{0,n}$. We can see that the invariants $N_{g,\sigma,\nu}$ do so as well.

\subsection{Main results}

In Section~\ref{section:twos}, we study a generalization of Example~\ref{eg:firsteg}: given
  an even degree $d=2k$, a partition $\mu=(\mu_1,\dots,\mu_r)$ of $d$, and a generic
  $r$-pointed genus $1$ curve $(C,p_1,\dots,p_r)$, we ask how many maps
  $f:C\to\P^1$ exist with $f^{-1}(0)=\{p_1,\dots,p_r\}$, ramification index $\mu_i$
  at $p_i$, and ramification profiles $(2,\dots,2)$ over both $1$ and $\infty\in\P^1$.

  \begin{thm}
    \label{thm:twos}
      Let $d=2k$ be any even integer greater than $2$. For any $\mu$ with sum $|\mu|=d$, $N_{(2^k),(2^k)}(\mu)=3\cdot 2^{\ell(\mu)-1}$.
  (In particular, there are $3$ maps of
    the kind considered in Example~\ref{eg:firsteg}.)
    \end{thm}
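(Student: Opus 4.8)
The plan is to reinterpret $N_{(2^k),(2^k)}(\mu)$ via Lemma~\ref{lem:dim} as an honest enumeration: with $r:=\ell(\mu)$, it counts the degree $2k$ covers $f\colon E\to\P^1$ from a \emph{generic} $r$-pointed genus $1$ curve $(E,p_1,\dots,p_r)$ with $f^{-1}(0)=\sum_i\mu_i p_i$ (ramification index $\mu_i$ at $p_i$) and ramification profile $(2^k)$ over each of $1$ and $\infty$; a standard transversality argument over the generic point of $\mb_{1,r}$ forces the remaining $r$ branch points to be simple. So the task is to enumerate these $f$.

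The engine will be the evenness of the ramification over $1$ and $\infty$. Let $\rho\colon\P^1\to\P^1$ be the (essentially unique) degree $2$ cover branched over $\{1,\infty\}$, with deck involution $\iota$, $\rho^{-1}(0)=\{+,-\}$, and $\rho^{-1}(1),\rho^{-1}(\infty)$ each a single branch point. Given $f$, I would form $\widetilde E$, the normalization of $E\times_{\P^1}\P^1$: since $\rho$ is branched only over points where $f$ is everywhere ramified of index exactly $2$, the fiber product acquires only nodes there, so $\widetilde E\to E$ is étale of degree $2$, classified by a class $\varepsilon\in\mathrm{Pic}^0(E)[2]$, and the other projection $g\colon\widetilde E\to\P^1$ has degree $2k$, is unramified over $\rho^{-1}(1)$ and $\rho^{-1}(\infty)$, and has profile $\mu$ over each of $+$ and $-$ (at the two preimages of each $p_i$), with $2r$ simple branch points falling into $r$ $\iota$-conjugate pairs. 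When $\varepsilon=0$ the curve splits as $\widetilde E=E\sqcup E$ and $f$ \emph{factors} as $\rho\circ g_0$ with $g_0\colon E\to\P^1$ of degree $k$, reducing to a lower-degree count of generalized-Tevelev type (to be iterated). When $\mu$ is even one can run the same argument with the covers branched over $\{0,\infty\}$ or $\{0,1\}$ instead; the three $2$-torsion classes so produced sum to zero, so a generic such $f$ factors through exactly one of the three covers, which is where the factor $3$ should become visible. In the remaining (and, for non-even $\mu$, only) case $\varepsilon\ne 0$, and $\widetilde E$ is a genuinely $2$-isogenous — hence again generic — elliptic curve, on which the maps $g$ must be analyzed directly.

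To close the argument uniformly, I would degenerate $(E,p_1,\dots,p_r)$ into the boundary of $\mb_{1,r}$ and enumerate the limiting admissible covers combinatorially, using admissible covers of stable graphs as in the definition recalled above; since $N_{(2^k),(2^k)}(\mu)=\deg\pi_\nu$ is constant in families, any such degeneration computes it. The expected outcome is a recursion whose value is independent of $k$, the surviving discrete data being a labelling of how each marked point is distributed relative to the double cover (contributing $2^{\ell(\mu)}$, cut to $2^{\ell(\mu)-1}$ by the $\iota$-symmetry) together with a choice of nonzero $2$-torsion class (contributing $3$; this $2$-torsion geometry is already visible in Example~\ref{eg:firsteg}, where the three covers arise from the three nontrivial $2$-torsion translates of $p$), yielding $3\cdot 2^{\ell(\mu)-1}$. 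The hard part will be the non-factoring case $\varepsilon\ne 0$: with no genuine sub-cover to exploit, one must control the admissible-cover combinatorics on the isogenous curve uniformly in $k$ — including checking connectedness of the relevant covers — well enough to see that the count depends neither on $k$ nor on the individual parts of $\mu$, and to rigorously pin down the constant $3$.
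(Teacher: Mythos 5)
Your write-up is a strategy outline rather than a proof, and the decisive steps are exactly the ones left open. The paper's argument has two concrete ingredients: a base case $N_{(2^k),(2^k)}(d)=3$, proved in Theorem~\ref{thm:combinatorial} by showing every such cover factors as a degree $2$ map $E\to\P^1$ followed by a degree $k$ map $\P^1\to\P^1$ (with the Hurwitz counts of Appendix~\ref{appendix:hurwitz} pinning down the orbifold numbers), and an explicit recursion (the theorem preceding Theorem~\ref{thm:twos}) obtained from Theorem~\ref{thm:admissible} by degenerating two of the points $p_{1,1},p_{1,2}$ onto a genus $0$ tail, with every boundary graph enumerated, every coefficient $c_\Gamma$, automorphism factor, and Hurwitz number computed, and the vanishing statements (Lemma~\ref{lem:twoscycles}) used to kill all but a few strata; induction on $\ell(\mu)$ then closes the argument. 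In your proposal the analogous content is deferred: you say the degeneration ``should'' produce a recursion independent of $k$, you do not identify the boundary strata or their multiplicities, you do not prove the base case, and you explicitly flag the non-split case $\varepsilon\neq 0$ as ``the hard part'' to be controlled later. Since the whole theorem lives in those computations, this is a genuine gap, not a stylistic difference.

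There is also a specific heuristic in your sketch that does not hold up and is not how the factor $3$ actually arises. The three $2$-torsion classes obtained from the double covers of the target branched at $\{1,\infty\}$, $\{0,\infty\}$, $\{0,1\}$ do sum to zero, but that does not imply a generic $f$ factors through exactly one of them: classes summing to zero can all be nonzero (the three nontrivial classes of $E[2]$ sum to zero), and for non-even $\mu$ the covers branched at $\{0,\infty\}$ or $\{0,1\}$ are not even available. Moreover, in the case $\mu=(d)$ your split case $\varepsilon=0$ would force $f=\rho\circ g_0$ with $g_0$ of degree $k$, which is incompatible with full ramification of index $2k$ at $p$ since $\rho$ is unramified over $0$; so for the base case your mechanism for the $3$ cannot be the factorization through a target double cover. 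In the paper the $3$ comes from the source side: the cover factors through the degree $2$ quotient of $E$, and the three choices correspond to the three nontrivial $2$-torsion points (the three possible positions of $q_4$ in Section~\ref{section:comb}, equivalently the $\binom 32$-type counts $n_{a,b,d'}\in\{3,6\}$ in the recursion). If you want to salvage your approach, you would need to carry out the admissible-cover degeneration in full, which in effect reproduces the paper's Theorem~\ref{thm:admissible} computation.
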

  
  In Section~\ref{section:Sab}, we study a different problem: given
  a degree $d$, a partition $\mu=(\mu_1,\dots,\mu_k)$ of $d$, multisets of positive integers $\a=\{a_1,\dots,a_r\}$ and $\b=\{b_1,\dots,b_s\}$,
  and a generic $(k+s)$-pointed genus $1$ curve $(C,p_1,\dots,p_k,P_1,\dots,P_s)$,
  we ask how many maps $f:C\to\P^1$ exist with $f^{-1}(0)=\{p_1,\dots,p_k\}$, ramification
  index $\mu_i$ at $p_i$, ramification index $b_j$ at $P_j$, and other (unmarked) points
  with ramification indices $a_1,\dots,a_r$. (Call the answer $S_{\a,\b}(\mu)$.)
  While we do not find a closed form solution, we develop recursive relations that suffice
  for effective computation:

  \begin{thm}
    \label{thm:Sabcomputable}
  Every invariant $S_{\a,\b}(\mu)$ can be computed by
  a finite
  sequence of applications of Theorems~\ref{thm:reduceb}, \ref{thm:reducea} and \ref{thm:Sabbase}.
  \end{thm}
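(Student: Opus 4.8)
The plan is to prove the statement by a termination (well-founded induction) argument: the three cited results are organized as two recursion steps, Theorems~\ref{thm:reduceb} and~\ref{thm:reducea}, together with one terminal evaluation, Theorem~\ref{thm:Sabbase}. Since an index-$1$ part of $\a$ or $\b$ imposes no ramification condition, I will normalize so that every part of $\a$ and $\b$ is at least $2$. The content to be extracted from the three theorems is then: (a) each of \ref{thm:reduceb} and \ref{thm:reducea} rewrites $S_{\a,\b}(\mu)$ as a finite $\Q$-linear combination of invariants $S_{\a',\b'}(\mu')$ that are \emph{strictly smaller} in a fixed complexity measure, plus terms that are $0$ or (disconnected) Hurwitz numbers computable as in Appendix~\ref{appendix:hurwitz}; and (b) \ref{thm:Sabbase} gives a closed value for every triple that is minimal in that measure. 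Strong induction on the measure then yields the result.

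For the complexity measure I would take the lexicographically ordered tuple
\[
\Phi(\a,\b,\mu)=\Bigl(\,|\b|,\ \sum_{b\in\b}(b-1),\ \sum_{a\in\a}(a-1)\,\Bigr)\in\N^{3},
\]
possibly refined by further coordinates such as $\ell(\mu)$ or $d=|\mu|$ if a recursion turns out to alter the fiber over $0$ (any change in $d$ under the recursions can only decrease it); the exact packaging is dictated by how the recursions of Sections~\ref{section:twos}--\ref{section:Sab} redistribute parts. Because the Riemann--Hurwitz balance for a genus-$1$ source of degree $d$ forces $\sum_{a\in\a}(a-1)+\sum_{b\in\b}(b-1)\le d+\ell(\mu)$, the measure $\Phi$ ranges over a finite set, so the lexicographic order on it is trivially well-founded and the induction needs nothing beyond strict decrease.

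The verification then has three parts. First, Theorem~\ref{thm:reduceb} should apply whenever $\b\neq\emptyset$, and I must check that every invariant on its right-hand side has either strictly smaller $|\b|$ or equal $|\b|$ and strictly smaller $\sum_{b\in\b}(b-1)$ --- in particular that it never enlarges $\b$ and that any new unmarked ramification it creates affects only the (lexicographically dominated) third coordinate. Second, Theorem~\ref{thm:reducea} should apply whenever $\b=\emptyset$ and $\a$ has a part $\ge 3$, and I must check its right-hand side consists only of invariants with $\b=\emptyset$ (no marked point is reintroduced) and strictly smaller $\sum_{a\in\a}(a-1)$, with the boundary-stratum correction terms again of this shape, Hurwitz numbers, or zero by a dimension count. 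Third, the $\Phi$-minimal triples are exactly those with $\b=\emptyset$ and $\a=(2^{t})$ after deleting $1$'s; these are precisely the inputs of Theorem~\ref{thm:Sabbase}, which evaluates them (ultimately via Theorem~\ref{thm:twos} and known generalized Tevelev degrees \cite{Generalized}). Granting these, one applies \ref{thm:reduceb} until $\b=\emptyset$, then \ref{thm:reducea} until $\a$ has only parts $\le 2$, then \ref{thm:Sabbase}.

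The main obstacle is exactly this bookkeeping of the ``off-diagonal'' output of the two recursions: the substance is not that each theorem produces \emph{some} simpler invariants but that it produces \emph{only} strictly $\Phi$-smaller ones, with no growth in an earlier lexicographic coordinate and no cycling between $\a$- and $\b$-reductions. Pinning down the precise degeneration formulas so that this holds --- and choosing $\Phi$ to match them --- is where the real work lies; everything else is then a finite combinatorial check.
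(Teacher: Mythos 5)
Your overall shape (well-founded induction on a lexicographic complexity measure, with the two recursions as descent steps and Theorem~\ref{thm:Sabbase} as the terminal evaluation) is the same as the paper's, but the specific measure and termination target you chose do not work, and the point you flag as ``where the real work lies'' is exactly where the argument breaks. The second sum in Theorem~\ref{thm:reducea} produces terms $S_{\a-\{z_1,z_2\},\,\b+\{z_1+z_2-n-m-3\}}(\mu)$, i.e.\ it \emph{adds} an element to $\b$; so your claim that its right-hand side ``consists only of invariants with $\b=\emptyset$'' is false, and your plan of first emptying $\b$ via Theorem~\ref{thm:reduceb} and then staying in the $\b=\emptyset$ regime cannot be carried out. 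With your $\Phi=(|\b|,\sum_{b\in\b}(b-1),\sum_{a\in\a}(a-1))$ the leading coordinate strictly increases on these terms, and even Theorem~\ref{thm:reduceb}'s second term can raise the middle coordinate (the new part $y+z-n-2$ exceeds the removed $y$ whenever $z>n+2$) while $|\b|$ stays fixed. The coordinate you relegate to an optional trailing refinement, $\ell(\mu)$, is the one that must come \emph{first}: both recursions strictly decrease the triple $(\ell(\mu),\ell(\b),\ell(\a))$ in lexicographic order (reducea drops $\ell(\mu)$ by $1$ or $2$; reduceb either drops $\ell(\b)$ with $\ell(\mu)$ fixed or drops $\ell(\mu)$), and this is the order the paper inducts on.

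The termination target is also misidentified. By Lemma~\ref{lem:Sabdim} one always has $|\a|=\ell(\mu)+2$, so the base stratum reached by the recursions is $\ell(\mu)=1$, $\b=\emptyset$, $|\a|=3$ --- precisely the input $S_{\{a_1,a_2,a_3\},\emptyset}(d)$ of Theorem~\ref{thm:Sabbase} --- not ``$\b=\emptyset$ and $\a=(2^t)$''; moreover that base case is computed by genus reduction and the Hurwitz counts of Appendix~\ref{appendix:hurwitz}, not via Theorem~\ref{thm:twos} or generalized Tevelev degrees. The correct procedure is: if $\ell(\mu)>1$ apply Theorem~\ref{thm:reducea} (regardless of $\a$ or $\b$); if $\ell(\mu)=1$ and $\b\neq\emptyset$ apply Theorem~\ref{thm:reduceb}; if $\ell(\mu)=1$ and $\b=\emptyset$ apply Theorem~\ref{thm:Sabbase}; $\ell(\mu)=0$ is handled by convention. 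As written, your proposal does not establish strict descent for any measure it actually specifies, so the induction does not go through.
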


  While our primary interest is in the case when $\b=\emptyset$, in keeping with the question posed in the abstract,
  Theorem~\ref{thm:Sabcomputable} makes use of the more general invariants to recursively compute $S_{\a,\emptyset}(\mu)$.
  We will establish a closed form in an interesting special case:

  \begin{thm}
    \label{thm:deg4}
    Given a degree $d\geq 2$ and a generic $d$-pointed genus $1$ curve $(C,x_1,\dots,x_d)$,
    the number of degree $d$ maps $f:C\to\P^1$ such that $f^{-1}(0)=\{p_1,\dots,p_d\}$ and
    $d-2$ points of $C$ have ramification index $3$ is given by
    \[\frac{d(d-1)(2(d-1)^2+1)}{6}\]
    \end{thm}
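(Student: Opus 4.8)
The quantity is the invariant $S_{\a,\b}(\mu)$ of Section~\ref{section:Sab} with $\mu=(1^d)$, $\a=\{3^{d-2}\}$ and $\b=\emptyset$; equivalently, writing $N(d):=N_{(3,1^{d-3}),\dots,(3,1^{d-3})}\bigl((1^d)\bigr)$ for the invariant with $d-2$ unmarked profiles $(3,1^{d-3})$ (the Riemann--Hurwitz deficit being made up by four simple branch points, per the convention following the problem statement), we must show $N(d)=\tfrac{d(d-1)(2(d-1)^2+1)}{6}$. By Theorem~\ref{thm:Sabcomputable} this is computable from Theorems~\ref{thm:reduceb}, \ref{thm:reducea} and \ref{thm:Sabbase}; the content of Theorem~\ref{thm:deg4} is that along this one-parameter family the recursion collapses to a recurrence with a polynomial solution. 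The plan is therefore: (i) turn the recursion into a relation expressing $N(d)$ through $N(d-1),N(d-2),\dots$ and explicit constants; (ii) solve it, or guess-and-verify the quartic; (iii) pin down the base cases.

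I would organize step (i) through a geometric dictionary. Since $g=1$, fixing $f^{-1}(0)=\{x_1,\dots,x_d\}$ forces $f^{\ast}\mathcal O(1)\cong L:=\mathcal O_C(x_1+\dots+x_d)$, so a degree-$d$ map with that fiber over $0$ is the same as a point $[s_1]\in\mathbb B:=\P\bigl(H^0(L)/\langle s_0\rangle\bigr)\cong\P^{d-2}$, where $s_0$ has divisor $\sum x_i$. Its ramification divisor is the Wronskian $W(s_0,s_1)=s_0\,ds_1-s_1\,ds_0$, a section of $L^{\otimes2}$ (using $\omega_C\cong\mathcal O_C$), and $[s_1]\mapsto[W(s_0,s_1)]$ realizes $\mathbb B$ as a linear $\P^{d-2}$ inside $|L^{\otimes2}|\cong\P^{2d-1}$. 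Thus $N(d)$ counts the $[s_1]\in\mathbb B$ for which $W(s_0,s_1)$ acquires $d-2$ double points (a codimension $d-2$ condition). The naive intersection of $\mathbb B$ with the locus $\Sigma_m\subset|L^{\otimes2}|$ of divisors with $\ge m$ double points overcounts, though: along each of the $d$ hyperplanes $\{s_1(x_i)=0\}\subset\mathbb B$ the pencil $[s_0:s_1]$ acquires a base point at $x_i$, so $W=t_{x_i}^{2}W(s_0',s_1')$ has a spurious double point there and, after clearing $t_{x_i}$, reduces to the $d{-}1$ instance of the problem. Running inclusion--exclusion over the subsets $S\subseteq\{x_1,\dots,x_d\}$ that become base points --- each contributing a copy of $N(d-|S|)$, for generic $C$ where all these intersections are proper --- yields
\[
\deg\Sigma_{d-2}\;=\;\sum_{k=0}^{d}\binom{d}{k}\,N(d-k),\qquad N(0)=N(1)=0 .
\]
This is exactly the shape of the admissible-covers reduction, with $\deg\Sigma_{d-2}$ the leading term. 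It is a classical de Jonqui\`eres-type count, which I would evaluate as $\tfrac{1}{(d-2)!}\int_{C^{d-2}}s_{d-2}(\mathcal F)$ for the rank-$4$ sheaf $\mathcal F=\pi_{\ast}\!\bigl(\rho^{\ast}L^{\otimes2}(-2{\textstyle\sum_{i=1}^{d-2}}\delta_i)\bigr)$ on $C^{d-2}$ ($\delta_i$ the graph of the $i$-th projection); Grothendieck--Riemann--Roch, with trivial Todd term since $\omega_C\cong\mathcal O_C$, expresses $\operatorname{ch}(\mathcal F)$ in the pullbacks $\operatorname{pr}_i^{\ast}L^{\otimes2}$ and the diagonals, whose products on an elliptic curve are elementary ($\delta_i^{2}=0$, and the self-intersection of the diagonal of $C^2$ vanishes), so $\deg\Sigma_{d-2}$ comes out explicitly --- e.g. $\deg\Sigma_0=1$, $\deg\Sigma_1=4d$, and $\deg\Sigma_2=80$ at $d=4$.

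Step (ii) is then the inverse binomial transform, $N(d)=\sum_{j}(-1)^{d-j}\binom{d}{j}\deg\Sigma_{j-2}$ (with $\deg\Sigma_m:=0$ for $m<0$), and one checks the result is the claimed quartic --- a finite polynomial verification once $\deg\Sigma_m$ is in hand; alternatively one simply confirms that this quartic satisfies the recurrence produced by Theorems~\ref{thm:reduceb}--\ref{thm:Sabbase}. For step (iii) the base cases are immediate: $N(2)=1$ (the unique map attached to $\mathcal O_C(x_1+x_2)$) and $N(3)=9$ --- embed $C$ in $\P^2$ by $|L|$, note that a degree-$3$ map with $f^{-1}(0)=\{x_1,x_2,x_3\}$ is projection from a point of the line $\{s_0=0\}$, and that such a projection has a totally ramified point exactly when its center lies on the tangent at a flex of $C$, so the count is the number of flexes of a plane cubic. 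The main obstacle is making step (i) rigorous: transversality for generic $C$, the identification of the spurious contributions with the strictly smaller invariants with multiplicity one, and the verification that no further degenerate strata contribute --- equivalently, that the recursion of Theorems~\ref{thm:reduceb}--\ref{thm:Sabbase} closes with exactly these coefficients --- together with the Grothendieck--Riemann--Roch bookkeeping that evaluates $\deg\Sigma_{d-2}$. Everything downstream of that is mechanical.
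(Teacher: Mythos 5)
Your strategy is the ``direct evaluation via the line bundle characterization'' that the paper explicitly mentions but does not pursue, and in outline it is coherent: the dictionary between maps with unramified fiber $\{x_1,\dots,x_d\}$ over $0$ (taken up to automorphisms of the target fixing $0$, which is what the Hurwitz count records) and points of $\mathbb B=\P\bigl(H^0(L)/\langle s_0\rangle\bigr)$, the Wronskian embedding into $|L^{\otimes 2}|$, the inclusion--exclusion over base-point subsets $S$ (a base point at $x_i$ forces a spurious double point of $W$ and the residual pencil solves the degree $d-|S|$ problem), and the de Jonqui\`eres/Segre-class evaluation of $\deg\Sigma_m$ are all consistent with small cases: at $d=3$ one gets $12=9+3\cdot 1$, and at $d=4$ the Segre computation on $C^2$ indeed gives $80=38+4\cdot 9+6\cdot 1$. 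This is a genuinely different route from the paper, which stays inside its admissible-covers formalism: it introduces $U_{d,a}=S_{\{3\}^{d-2}\cup\{2\}^{5-a},\emptyset}(a,1^{d-a})$, extracts from Theorem~\ref{thm:reducea} the difference equations $U_{d,a}=U_{d,a+1}+U_{d-1,a}$ for $a>1$ and $U_{d,1}=U_{d,2}+U_{d-2,1}$, computes the base values $U_{5,5}=16$, $U_{4,4}=20$, $U_{3,3}=8$, $U_{2,2}=1$ from Theorem~\ref{thm:Sabbase} (with $U_{d,a}=0$ for $a\geq 6$, $U_{1,1}=0$, $U_{2,1}=1$), and observes that the claimed quartic satisfies the same finite-difference characterization.

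However, as written your argument has genuine gaps, the largest of which you name yourself. First, the quantitative engine is missing: you never derive a general formula for $\deg\Sigma_m$, only the method and the sample values $1$, $4d$, $80$; without it the inverse binomial transform cannot be executed, so the quartic is neither derived nor verified for general $d$. Second, the identity $\deg\Sigma_{d-2}=\sum_k\binom dk N(d-k)$ requires (i) properness of $\mathbb B\cap\Sigma_{d-2}$ for generic $(C,x_1,\dots,x_d)$ --- $\mathbb B$ is a highly non-generic linear subspace, so ``degree equals number of intersection points'' is not automatic; (ii) multiplicity one both at honest solutions and along the spurious strata $\{s_1(x_i)=0,\ i\in S\}$, where $\mathbb B$ lies inside special hyperplanes and a tangent-space computation (using $W(s_0,u)(x_i)=-u(x_i)s_0'(x_i)$ together with the tangent hyperplanes $\{u(y_j)=0\}$ to $\Sigma_{d-2}$ at its smooth points) must actually be carried out; and (iii) that $\mathbb B$ avoids the deeper strata (triple points, extra double points, colliding branch points --- the last also needed because the Hurwitz count requires the $n$ branch points to be distinct). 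These are precisely where the enumerative content sits; in the paper they are absorbed by the admissible-covers machinery (finiteness of the Hurwitz maps, Theorem~\ref{thm:admissible}, Lemma~\ref{lem:branchpoints}). Finally, your fallback --- ``confirm the quartic satisfies the recurrence produced by Theorems~\ref{thm:reduceb}--\ref{thm:Sabbase}'' --- is in substance the paper's actual proof, but you do not derive those recurrences or their base values, so it cannot stand in for the missing steps. In short: a viable and genuinely different plan, checked at $d\le 4$, but not yet a proof.
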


The primary technique we use to count covers of interest is to
count covers over a non-generic pointed {\it stable} curve. This method is not new; it was used originally in \cite{Cela} thanks to a technical result of \cite{Lian}, and is also used in \cite{Generalized}. We adapt this method to our context in Section~\ref{section:admissible}, allowing us to turn our central question into a combination of graph combinatorics and solving recurrence relations. It is also sometimes possible to compute our invariants through direct evaluation, by using the line bundle characterization of maps to $\P^1$ to recast the problem as an enumerative question about subvarieties of projective space, but we do not use this method here.

A few potential next steps could be pursued to build on our work.
One would be to study invariants $N_{g,\sigma,\nu}$ with
$g>1$ (or $g=0$); the technique in Section~\ref{section:admissible} will continue
to apply, though with some modifications. Another would be to establish results about the
``virtual'' version of the invariants studied here, in which admissible covers are replaced with stable maps in the definitions.

\subsection{Acknowledgements}

This paper is an adaptation of my dissertation completed as a PhD student at the University of Michigan \cite{Thesis}.
I received support from NSF RTG grant DMS 1840234 in summer and fall of 2022.

My advisor, Aaron Pixton, played an essential role as he guided me through the course of this work; many ideas
came out of our regular meetings. My other committee members -- Vilma Mesa, Alex Perry, and David Speyer -- reviewed my work and gave useful comments. I am also thankful to fellow PhD students, including Nawaz Sultani, Qiusheng Zhao, Suchitra Pande, Sayantan Khan, and Malavika Mukundan, for helpful conversations.

\section{Admissible covers}
\label{section:admissible}

Having defined our invariants of intersect in the previous section, we now turn
to a
known method in the literature that we will use
to recursively compute invariants $W_{g,\sigma,\nu}$. Cela and Lian use this method to prove Proposition 6 in \cite{Generalized}, relying on a result from an earlier paper of Lian's \cite{Lian}. We begin by motivating the method, and then illustrate the use of the method as adapted to the specific context of this paper. This adaptation takes the form of Theorem~\ref{thm:admissible}, which we will use repeatedly in later sections. We will also contribute an observation that facilitates easier use of Theorem~\ref{thm:admissible}, stated as Lemma~\ref{lem:branchpoints}

Recall that $W_{g,\sigma,\nu}$ is the degree of the map $\pi_{\nu}:\Hb_{g,\sigma}\to\mb_{g,m}$
where $\sum\ell(\nu_i)=m$; our enumerative interpretation is that, for a fixed general $m$-pointed genus $g$ curve $(C,p_1,\dots,p_m)$, $W_{g,\sigma,\nu}$ counts the number of Hurwitz covers of the correct form with source $(C,p_1,\dots,p_m)$. One can hope (and we will confirm) that the same is roughly true for
a specific $m$-pointed genus $g$ {\it stable} curve lying in a suitable boundary divisor of $\mb_{g,m}$.

We first begin with a rough illustration of what we intend to do.

\begin{eg}
  \label{eg:fourtorsion}
  Suppose we are interested in calculating the genus $1$ invariant $W_{(3),(2,1),(2,1),(2,1)}(2,1)$, i.e.\ the degree of the map
  \[
  \pi:\Hb_{1,\sigma}\to\mb_{1,2}
  \]
  
  Let $A$ be a stable graph consisting of a genus $1$ vertex connected to a genus $0$ vertex, with two legs on the
  genus $0$ vertex:

    \scalebox{\BRANCHEDFACTOR}{
  \begin{tikzpicture}[thick,amat/.style={matrix of nodes,nodes in empty cells,
  row sep=2.5em,rounded corners,
  nodes={draw,solid,circle,minimum size=1.0cm}},
  dmat/.style={matrix of nodes,nodes in empty cells,row sep=2.5em,nodes={minimum size=1.0cm},draw=myred},
  fsnode/.style={fill=myblue},
  ssnode/.style={fill=mygreen}]

  \matrix[amat,nodes=fsnode] (mat1) {$1$\\};

 \matrix[amat,right=4cm of mat1,nodes=ssnode] (mat2) {$0$\\};

 \draw  (mat1-1-1) edge (mat2-1-1);
 
 \draw  (mat2-1-1) -- +(30:.7) node[anchor=west] {$p_{1,1}$}
 (mat2-1-1) -- +(330:.7) node[anchor=west] {$p_{1,2}$};

              \end{tikzpicture}}

  Intuitively we want to count covers from a generic stable curve with dual graph $A$ to a genus $0$
  curve, such that $p_{1,1}$ and $p_{1,2}$ are in the same fiber (with ramification indices $2$ and $1$),
  and there is marked ramification elsewhere with ramification profiles $(3),(2,1),(2,1),(2,1)$.
  Remembering the marked points, the source should be a stable curve in $\mb_{1,9}$ with dual graph $\hat A$, where $\hat A$
  becomes $A$ after applying a forgetful map.

  One possible graph $\hat A_1$, obtained by putting $p_{5,1}$ and $p_{5,2}$ on the genus $0$ component and other
  marked points on the genus $1$ component, is the source of a type of admissible cover in $\Hb_{1,\sigma}$:

    \scalebox{\BRANCHEDFACTOR}{
                  \begin{tikzpicture}[thick,amat/.style={matrix of nodes,nodes in empty cells,
  row sep=2.5em,rounded corners,
  nodes={draw,solid,circle,minimum size=1.0cm}},
  dmat/.style={matrix of nodes,nodes in empty cells,row sep=2.5em,nodes={minimum size=1.0cm},draw=myred},
  fsnode/.style={fill=myblue},
  ssnode/.style={fill=mygreen}]

  \matrix[amat,nodes=fsnode] (mat1) {$1$\\};

    \matrix[dmat,left=1.3cm of mat1] (degrees1) {$3$\\};

 \matrix[amat,right=4cm of mat1,nodes=ssnode] (mat2) {$0$\\};

 \matrix[dmat,right=1.3cm of mat2] (degrees2) {$3$\\};

 \draw  (mat1-1-1) edge["$3$"] (mat2-1-1);

 \draw (mat1-1-1) -- +(45:.7) node[anchor=west] {$p_{2,1}$}
 (mat1-1-1) -- +(245:.7) node[anchor=north] {$p_{3,1}$}
 (mat1-1-1) -- +(295:.7) node[anchor=north] {$p_{3,2}$}
 (mat1-1-1) -- +(165:.7) node[anchor=east] {$p_{4,1}$}
 (mat1-1-1) -- +(190:.7) node[anchor=east] {$p_{4,2}$};

 \draw  (mat2-1-1) -- +(10:.7) node[anchor=west] {$p_{1,1}$}
 (mat2-1-1) -- +(345:.7) node[anchor=west] {$p_{1,2}$}
 (mat2-1-1) -- +(50:.7) node[anchor=south] {$p_{5,1}$}
 (mat2-1-1) -- +(90:.7) node[anchor=east] {$p_{5,2}$};

                  \end{tikzpicture}}

                  Another possible graph $\hat A_2$, obtained by putting $p_{2,1}$ on the genus $0$ component
                  and splitting the other marked points across a genus $1$ and a new genus $0$ component,
                  is the source of another type of admissible cover in $\Hb_{1,\sigma}$:

                  \scalebox{\BRANCHEDFACTOR}{
                  \begin{tikzpicture}[thick,amat/.style={matrix of nodes,nodes in empty cells,
  row sep=2.5em,rounded corners,
  nodes={draw,solid,circle,minimum size=1.0cm}},
  dmat/.style={matrix of nodes,nodes in empty cells,row sep=2.5em,nodes={minimum size=1.0cm},draw=myred},
  fsnode/.style={fill=myblue},
  ssnode/.style={fill=mygreen}]

                                      \matrix[amat,nodes=fsnode] (mat1) {$1$\\
                                      $0$\\};

  \matrix[dmat,left=1cm of mat1] (degrees1) {$2$\\
  $1$\\};

 \matrix[amat,right=4cm of mat1,nodes=ssnode] (mat2) {$0$\\};

 \matrix[dmat,right=1cm of mat2] (degrees2) {$3$\\};

 \draw  (mat1-1-1) edge["$2$"] (mat2-1-1)
 (mat1-2-1) edge["$1$"] (mat2-1-1);

 \draw (mat1-1-1) -- +(35:.7) node[anchor=west] {$p_{5,1}$}
 (mat1-2-1) -- +(35:.7) node[anchor=west] {$p_{5,2}$}
 (mat1-1-1) -- +(220:.7) node[anchor=north] {$p_{3,1}$}
 (mat1-2-1) -- +(220:.7) node[anchor=north] {$p_{3,2}$}
 (mat1-1-1) -- +(165:.7) node[anchor=east] {$p_{4,1}$}
 (mat1-2-1) -- +(165:.7) node[anchor=east] {$p_{4,2}$};

 \draw  (mat2-1-1) -- +(10:.7) node[anchor=west] {$p_{1,1}$}
 (mat2-1-1) -- +(345:.7) node[anchor=west] {$p_{1,2}$}
 (mat2-1-1) -- +(50:.7) node[anchor=south] {$p_{2,1}$};

                \end{tikzpicture}}

                                    We hope that $W_{(3),(2,1),(2,1),(2,1)}(2,1)$ can be expressed
                                    as a sum of terms corresponding to these kinds of
                                    admissible covers:

                                    There are three graphs similar to $\hat A_1$, corresponding to a choice of $i\in\{3,4,5\}$ for which $p_{i,1}$ and $p_{i,2}$ appear on the genus $0$ component. The contribution from each graph should be a product
                                    \[
                                    W_{(3),(2,1),(2,1)}(3)\cdot \overline H((3),(2,1),(2,1))=W_{(3),(2,1),(2,1)}(3)\cdot 1=2N_{(3),(2,1),(2,1)}(3)=2\cdot 8=16
                                    \]
                                    There is only a single graph like $\hat A_2$,
                                    and its contribution is
                                    \[
                                    W_{(2),(2),(2)}(2)\cdot \overline H((3),(2,1),(2,1))=3!\cdot N_{(2),(2),(2)}(2)\cdot 1=6
                                    \]
                                    Therefore our expectation is that
                                    \[
                                    W_{(3),(2,1),(2,1),(2,1)}(2,1)=3\cdot 16+6=54\implies N_{(3),(2,1),(2,1),(2,1)}(2,1)=\frac{54}{3!}=9.
                                    \]
  \end{eg}

                \subsection{Setup}

                We now turn to a general explanation of this technique loosely applied in the example above, and in following \cite{Generalized} we will write an explicit application of the technique to our specific invariants.

Let $A$ be a stable graph
corresponding to a boundary stratum in $\mb_{g,m}$.
Following Proposition 3.2 of \cite{Lian}, we have a commutative diagram
\[\begin{tikzcd}
\amalg \mathcal H_{(\Gamma,\Gamma')} & \Hb_{g,\sigma} \\
\amalg \mb_{\hat A} & \mb_{g,N} \\
	\mb_A & \mb_{g,m}
	\arrow[from=1-1, to=1-2]
	\arrow[from=1-1, to=2-1, "\amalg \phi_{(\Gamma,\Gamma')}"]
        \arrow[from=2-1, to=3-1]
	\arrow[from=1-2, to=2-2]
	\arrow[from=2-1, to=2-2]
        \arrow[from=2-2, to=3-2]
        \arrow[from=3-1, to=3-2, "\xi_A"]
        \arrow[from=1-2, to=3-2, bend left=52, "\pi_{\nu}"]
        \arrow[from=1-1, to=3-1, bend right=52, "\phi"]
\end{tikzcd}\]

Here the index set of the top left object
consists of pairs $(\Gamma,\Gamma')$
where $\Gamma$ and $\Gamma'$ are stable graphs (the latter being genus $0$), $\Gamma\to\Gamma'$ is an admissible cover of stable graphs, $\Gamma$
comes with a chosen $\hat A$-structure (i.e., a specialization to $\hat A$),
and $\hat A$ is any stable graph reducing to $A$ under the forgetful map.
The index set of $\amalg\overline{\mathcal M}_{\hat A}$ consists of
all stable graphs reducing to $A$ under the forgetful map.

The outside square is Cartesian on the level of closed points. By
Proposition 3.3 of \cite{Lian}, if $\mathcal H_{(\Gamma,\Gamma')}$ is of
expected dimension, its contribution to $\xi_A^*(\pi_{\nu})_*([\Hb_{g,\sigma}])$
is a nonzero multiple of $\phi_*([\mathcal H_{(\Gamma,\Gamma')}])$ on $\overline{\mathcal M}_A$, and in fact the coefficient is given by the product of labels of edges contracted in the $A$-structure of $\Gamma$ divided by the size of the automorphism group.

\begin{thm}
  \label{thm:admissible}
  If $A$ is a stable graph corresponding to a boundary divisor in $\mb_{g,m}$, then
  \[
  \frac 1{|\Aut(A)|}W_{g,\sigma,\nu}=\sum_{\hat A\rightsquigarrow A}\sum_{(\Gamma,\Gamma')}\frac 1{|\Aut(\Gamma)|}c_{\Gamma}W_{(\Gamma,\Gamma')}
  \]
  where $\hat A$ becomes $A$ after forgetting points, $\Gamma'$ is a stable genus $0$ graph with one edge, $\Gamma$ is a stable graph with a chosen $\hat A$-structure and an admissible
  cover to $\Gamma'$, $c_{\Gamma}$ is the
  product of labels of contracted edges in the $\hat A$-structure, and
  $W_{(\Gamma,\Gamma')}$ is a product of terms $W_{g',\sigma',\nu'}$ corresponding to each vertex of $\Gamma$.
\end{thm}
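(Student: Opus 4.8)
The plan is to extract the theorem as a numerical consequence of the commutative diagram reproduced just before the statement, whose content is supplied by Propositions 3.2 and 3.3 of \cite{Lian}. First I would fix $A$, a stable graph corresponding to a boundary divisor of $\mb_{g,m}$, so that $\dim\overline{\mathcal M}_A = \dim\mb_{g,m} - 1$. The strategy is to intersect both sides of an equation of cycle classes on $\mb_{g,m}$ with $[\overline{\mathcal M}_A]$ (equivalently, to pull back under $\xi_A$), and to read off multiplicities. Concretely, $\pi_\nu$ is generically finite of degree $W_{g,\sigma,\nu}$ (after passing to the appropriate orbifold normalization), so $(\pi_\nu)_*[\Hb_{g,\sigma}] = W_{g,\sigma,\nu}\,[\mb_{g,m}]$; to make the automorphism bookkeeping transparent I would instead work with the identity $(\pi_\nu)_*[\Hb_{g,\sigma}] = \frac{W_{g,\sigma,\nu}}{|\Aut(A)|}\,|\Aut(A)|\,[\mb_{g,m}]$ and track where the factor $\tfrac{1}{|\Aut(A)|}$ enters via the normalization of $\xi_A$ (the map $\xi_A:\mb_A\to\mb_{g,m}$ has degree $|\Aut(A)|$ onto its image as an orbifold map).

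Next I would use that the outer square of the diagram is Cartesian on closed points, together with Proposition 3.3 of \cite{Lian}: each stratum $\mathcal H_{(\Gamma,\Gamma')}$ of the fiber product, when it has expected dimension, contributes to $\xi_A^*(\pi_\nu)_*[\Hb_{g,\sigma}]$ a class $c_{(\Gamma,\Gamma')}\,\phi_*[\mathcal H_{(\Gamma,\Gamma')}]$, with coefficient equal to the product of the ramification labels of the edges of $\Gamma$ that get contracted in passing to the chosen $\hat A$-structure, divided by $|\Aut(\Gamma)|$ — this is exactly $c_\Gamma/|\Aut(\Gamma)|$ in the notation of the theorem. Summing over the index set — pairs $(\Gamma,\Gamma')$ where $\hat A\rightsquigarrow A$ and $\Gamma\to\Gamma'$ is an admissible cover of stable graphs with $\Gamma'$ having a single edge — gives
\[
\xi_A^*(\pi_\nu)_*[\Hb_{g,\sigma}] \;=\; \sum_{\hat A\rightsquigarrow A}\ \sum_{(\Gamma,\Gamma')}\frac{c_\Gamma}{|\Aut(\Gamma)|}\,\phi_*[\mathcal H_{(\Gamma,\Gamma')}].
\]
On the left side, $\xi_A^*(\pi_\nu)_*[\Hb_{g,\sigma}] = \tfrac{1}{|\Aut(A)|}W_{g,\sigma,\nu}\,[\overline{\mathcal M}_A]$ once the orbifold degree of $\xi_A$ is accounted for. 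On the right side, for each $(\Gamma,\Gamma')$ the space $\mathcal H_{(\Gamma,\Gamma')}$ is itself a product of Hurwitz spaces $\Hb_{g',\sigma',\nu'}$, one factor per vertex of $\Gamma$, and $\phi$ maps it (generically finitely) onto $\overline{\mathcal M}_{\hat A}$, hence onto a substack of $\overline{\mathcal M}_A$ of the same dimension; the degree of this map is precisely the product $W_{(\Gamma,\Gamma')} = \prod_v W_{g_v,\sigma_v,\nu_v}$, because away from the locus where the admissible cover acquires extra nodes, fixing the source stable curve with dual graph $\hat A$ amounts to fixing, independently on each vertex, a pointed source curve of the appropriate genus with the marked points dictated by $\nu$. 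Matching coefficients of $[\overline{\mathcal M}_A]$ then yields the stated formula.

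The main obstacle I expect is the dimension/transversality bookkeeping: one must check that every stratum $\mathcal H_{(\Gamma,\Gamma')}$ appearing with nonzero coefficient actually has the expected dimension (so that Proposition 3.3 of \cite{Lian} applies and there are no excess-intersection corrections), that the only contributing $\Gamma'$ are those with exactly one edge (which is what forces $A$ to be a \emph{divisor} rather than a higher-codimension stratum — an extra edge in $\Gamma'$ would push the stratum into codimension $\geq 2$), and that $\phi_*[\mathcal H_{(\Gamma,\Gamma')}]$ is genuinely $W_{(\Gamma,\Gamma')}$ times $[\overline{\mathcal M}_{\hat A}]$ with the correct reduced structure, i.e.\ no hidden multiplicity from the étale-locally-trivial gluing of admissible covers at the node. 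This requires verifying that, over a general point of $\overline{\mathcal M}_{\hat A}$, the admissible covers compatible with $(\Gamma,\Gamma')$ are counted with multiplicity one per vertex-wise choice, which follows because over a general boundary point the only degenerations are the prescribed ones and the ramification indices $d_e$ are the sole extra discrete data at the node. A secondary, purely combinatorial point is to confirm that the index set "$\hat A\rightsquigarrow A$ together with admissible covers $\Gamma\to\Gamma'$, $|E(\Gamma')|=1$" is finite and matches the index set of the Cartesian diagram; this is immediate from stability and the degree bound $d$, but should be stated cleanly.
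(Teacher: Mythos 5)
Your proposal follows essentially the same route as the paper: pull back $(\pi_{\nu})_*[\Hb_{g,\sigma}]$ along $\xi_A$, invoke Propositions 3.2 and 3.3 of \cite{Lian} for the stratum-by-stratum coefficients $c_{\Gamma}/|\Aut(\Gamma)|$, and identify $W_{(\Gamma,\Gamma')}$ vertex-wise. The one step you flag as an obstacle --- that only $\Gamma'$ with a single edge contribute --- is exactly what the paper's proof supplies, via the observation that $\mathcal H_{(\Gamma,\Gamma')}\to\mb_{\Gamma'}$ is finite by Hurwitz theory, so dominance over $\mb_A$ forces $\dim(\mb_{\Gamma'})=\dim(\mb_A)=\dim(\mb_{0,n})-1$; your codimension heuristic is the same count.
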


\begin{proof}
  Note that
  \[
  \xi_A^*(\pi_{\nu})_*([\mathcal H_{g,\sigma,\nu}]) =
  \xi_A^*(W_{g,\sigma,\nu}[\mb_{g,m}])=
  W_{g,\sigma,\nu}[\mb_A]
  \]
  Therefore, $W_{g,\sigma,\nu}$ is a sum of
  terms $\frac 1{|\Aut(\Gamma)|}c_{\Gamma}W_{(\Gamma,\Gamma')}$, for
  any $(\Gamma,\Gamma')$ such that
  $\mathcal H_{(\Gamma,\Gamma')}\to\mb_A$
  is dominant.

  We have a natural map $\mathcal H_{(\Gamma,\Gamma')}\to \mb_{\Gamma'}$ remembering the target, which is finite by Hurwitz theory. Therefore,
  \[
  \dim(\mathcal H_{(\Gamma,\Gamma')})=\dim(\mb_{\Gamma'})
  \]
  For $\mathcal H_{(\Gamma,\Gamma')}\to\mb_A$ to be
  dominant means therefore that
  \[
  \dim(\mb_{\Gamma'})=\dim(\mb_A)=
  \dim(\mb_{g,m})-1=\dim(\mb_{0,n})-1
  \]
  or equivalently that $\Gamma'$ has a single edge.
\end{proof}

\subsection{Genus reduction}

We can apply this technique to compute invariants $W_{\sigma_1,\dots,\sigma_n}(d)$ by reduction to
genus $0$ invariants, and illustrate with the following example.

\begin{eg}
  \label{eg:genusreduction}
  Suppose we are interested in calculating $W_{(4),(2,1,1),(2,1,1)}(4)$, the number of degree $4$ covers from a fixed elliptic curve $(E,p_1)$ to $\P^1$ with full ramification over $p_1$ and another point, with ramification points labeled.
  
  Let $A$ be a stable graph consisting of a single genus $0$ vertex with
  a loop and a leg labeled $p_{1,1}$; and let $\hat A$ be a graph with
  additional legs corresponding to the other fibers of ramification.

  While no such admissible cover
  can have actual source of the form described by $\hat A$ (due to the loop), there are in fact
  covers whose source {\it stabilizes} to the kind we want.
  Consider the following picture:

    \scalebox{\BRANCHEDFACTOR}{
              \begin{tikzpicture}[thick,amat/.style={matrix of nodes,nodes in empty cells,
  row sep=2.5em,rounded corners,
  nodes={draw,solid,circle,minimum size=1.0cm}},
  dmat/.style={matrix of nodes,nodes in empty cells,row sep=2.5em,nodes={minimum size=1.0cm},draw=myred},
  fsnode/.style={fill=myblue},
  ssnode/.style={fill=mygreen}]

  \matrix[amat,nodes=fsnode] (mat1) {$0$\\};

 \matrix[amat,right=4cm of mat1,nodes=ssnode] (mat2) {$0$\\};

 \draw  (mat1-1-1) edge[bend left] (mat2-1-1)
 (mat1-1-1) edge[bend right] (mat2-1-1);

 \draw (mat1-1-1) -- +(95:.7) node[anchor=south] {$p_{1,1}$}
 (mat1-1-1) -- +(215:.7) node[anchor=east] {$p_{3,1}$}
 (mat1-1-1) -- +(245:.7) node[anchor=north] {$p_{3,2}$}
 (mat1-1-1) -- +(285:.7) node[anchor=west] {$p_{3,3}$};

 \draw  (mat2-1-1) -- +(60:.7) node[anchor=west] {$p_{2,1}$}
 (mat2-1-1) -- +(305:.7) node[anchor=north] {$p_{4,1}$}
 (mat2-1-1) -- +(330:.7) node[anchor=west] {$p_{4,2}$}
 (mat2-1-1) -- +(360:.7) node[anchor=west] {$p_{4,3}$};

              \end{tikzpicture}}

  This is a stable graph $\Gamma$ that specializes to
  $\hat A$ in two ways, by contracting either of
  the two edges; each corresponds to a choice of $\hat A$-structure
  as defined in \cite{Schmitt}.
  In the sense of Definition 2.10 from \cite{Lian}, there is an admissible cover of stable graphs $\Gamma\to\Gamma'$ depicted as follows where $a+b=4$:

    \scalebox{\BRANCHEDFACTOR}{
                \begin{tikzpicture}[thick,amat/.style={matrix of nodes,nodes in empty cells,
  row sep=2.5em,rounded corners,
  nodes={draw,solid,circle,minimum size=1.0cm}},
  dmat/.style={matrix of nodes,nodes in empty cells,row sep=2.5em,nodes={minimum size=1.0cm},draw=myred},
  fsnode/.style={fill=myblue},
  ssnode/.style={fill=mygreen}]

  \matrix[amat,nodes=fsnode] (mat1) {$0$\\};

    \matrix[dmat,left=1cm of mat1] (degrees1) {$4$\\};

 \matrix[amat,right=4cm of mat1,nodes=ssnode] (mat2) {$0$\\};

 \matrix[dmat,right=1cm of mat2] (degrees2) {$4$\\};

 \draw  (mat1-1-1) edge[bend left,"$a$"] (mat2-1-1)
 (mat1-1-1) edge[bend right,"$b$"] (mat2-1-1);

 \draw (mat1-1-1) -- +(125:.7) node[anchor=south] {$p_{1,1}$}
 (mat1-1-1) -- +(170:.7) node[anchor=east] {$p_{3,1}$}
 (mat1-1-1) -- +(195:.7) node[anchor=east] {$p_{3,2}$}
 (mat1-1-1) -- +(215:.7) node[anchor=east] {$p_{3,3}$};

 \draw  (mat2-1-1) -- +(30:.7) node[anchor=west] {$p_{2,1}$}
  (mat2-1-1) -- +(305:.7) node[anchor=north] {$p_{4,1}$}
 (mat2-1-1) -- +(330:.7) node[anchor=west] {$p_{4,2}$}
 (mat2-1-1) -- +(360:.7) node[anchor=west] {$p_{4,3}$};

                \end{tikzpicture}}

    The contribution of such $(\Gamma,\Gamma')$, considering both possible $\hat A$-structures (the sum of whose
    coefficients $c_{\Gamma}$ is $a+b=4$), is
                \[
                \frac 1{|\Aut(\Gamma)|}\cdot 4\cdot W_{(\Gamma,\Gamma')}=\frac 1{|\Aut(a,b)|}\cdot 4\cdot \overline H((4),(a,b),(2,1,1))\cdot \overline H((4),(a,b),(2,1,1))
                \]
                When $(a,b)=(2,2)$ this is
                \[
                \frac 12\cdot 4\cdot 2\cdot 2=8
                \]
                and when $(a,b)=(3,1)$ this is
                \[
                1\cdot 4\cdot 2\cdot 2=16.
                \]
                There are similar choices of $(\Gamma,\Gamma')$ where the components of $q_3$ and
                $q_4$ are swapped,
                yielding a cumulative sum of $2\cdot (8+16)=48$.

                The other type of cover $\Gamma\to\Gamma'$ to consider can be depicted as follows:

                  \scalebox{\BRANCHEDFACTOR}{
                                \begin{tikzpicture}[thick,amat/.style={matrix of nodes,nodes in empty cells,
  row sep=2.5em,rounded corners,
  nodes={draw,solid,circle,minimum size=1.0cm}},
  dmat/.style={matrix of nodes,nodes in empty cells,row sep=2.5em,nodes={minimum size=1.0cm},draw=myred},
  fsnode/.style={fill=myblue},
  ssnode/.style={fill=mygreen}]

  \matrix[amat,nodes=fsnode] (mat1) {$0$\\};

    \matrix[dmat,left=1cm of mat1] (degrees1) {$4$\\};

    \matrix[amat,right=4cm of mat1,nodes=ssnode] (mat2) {$0$\\
      $0$\\
    $0$\\};

 \matrix[dmat,right=1cm of mat2] (degrees2) {$2$\\
   $1$\\
 $1$\\};

 \draw  (mat1-1-1) edge[bend left,"$1$"] (mat2-1-1)
 (mat1-1-1) edge["$1$"] (mat2-1-1)
 (mat1-1-1) edge["$1$"] (mat2-2-1)
 (mat1-1-1) edge["$1$"] (mat2-3-1);

 \draw (mat1-1-1) -- +(125:.7) node[anchor=south] {$p_{1,1}$}
 (mat1-1-1) -- +(225:.7) node[anchor=east] {$p_{2,1}$};

 \draw  (mat2-1-1) -- +(30:.7) node[anchor=west] {$p_{3,1}$}
 (mat2-2-1) -- +(30:.7) node[anchor=west] {$p_{3,2}$}
 (mat2-3-1) -- +(30:.7) node[anchor=west] {$p_{3,3}$}
 (mat2-1-1) -- +(330:.7) node[anchor=west] {$p_{4,1}$}
 (mat2-2-1) -- +(330:.7) node[anchor=west] {$p_{4,2}$}
 (mat2-3-1) -- +(330:.7) node[anchor=west] {$p_{4,3}$};

                                \end{tikzpicture}}

                  There are two possible $\hat A$-structures for $\Gamma$ corresponding to which of the top two edges is
                  contracted (alongside the bottom two edges), and the sum of corresponding terms $c_{\Gamma}$ is
                  $1+1=2$.
                                There are two choices for placement of $p_{3,2}$ and $p_{4,2}$
                                and two automorphisms of the graph
                                (swapping the two top edges), so
                                the total contribution
                                is
                                \begin{align*}
                                  2\cdot \frac 1{|\Aut(\Gamma)|}\cdot 2\cdot W_{(\Gamma,\Gamma')}&=2\cdot\frac 12\cdot 2\cdot \overline H((4),(4),(1,1,1,1))\cdot \overline H((2),(2),(1,1)) \\
                                  &=2\cdot 6\cdot 1\\
                                  &=12
                                \end{align*}
                                Combining our results,
                                \[
                                \frac 12W_{(4),(2,1,1),(2,1,1)}(4)=48+12=60\implies W_{(4),(2,1,1),(2,1,1)}(4)=120\implies N_{(4),(2,1,1),(2,1,1)}(4)=\frac{120}{2^3}=15
                                \]
                                This matches our expectation, since $N_{(4),(2,1,1),(2,1,1)}(4)$ counts
                                nontrivial $4$-torsion points of an elliptic curve.
                                
\end{eg}

In general, any invariant $W_{\sigma_1,\dots,\sigma_n}(d)$ can be computed via this method, using the
same graph $A$ in this example and reducing to genus $0$ invariants (which will all be marked
Hurwitz numbers). One can also apply genus reduction for any
invariant $W_{\sigma_1,\dots,\sigma_n}(\mu)$ with $\ell(\mu)\geq 2$, although in
this case we will end up with enumerative counts in genus $0$ that
are more complicated than Hurwitz numbers.

  \subsection{Merging two points}

  In the case where $\ell(\mu)\geq 2$, we can consider the following graph $A$:

  \scalebox{\BRANCHEDFACTOR}{
  \begin{tikzpicture}[thick,amat/.style={matrix of nodes,nodes in empty cells,
  row sep=2.5em,rounded corners,
  nodes={draw,solid,circle,minimum size=1.0cm}},
  dmat/.style={matrix of nodes,nodes in empty cells,row sep=2.5em,nodes={minimum size=1.0cm},draw=myred},
  fsnode/.style={fill=myblue},
  ssnode/.style={fill=mygreen}]

  \matrix[amat,nodes=fsnode] (mat1) {$1$\\};

 \matrix[amat,right=4cm of mat1,nodes=ssnode] (mat2) {$0$\\};

 \draw  (mat1-1-1) edge (mat2-1-1);

 \draw (mat1-1-1) -- +(180:.7) node[anchor=east] {$p_{1,3}$}
 (mat1-1-1) -- +(225:.7) node[anchor=north] {$\dots$}
 (mat1-1-1) -- +(285:.7) node[anchor=north] {$p_{1,\ell(\mu)}$};

 \draw  (mat2-1-1) -- +(60:.7) node[anchor=west] {$p_{1,1}$}
 (mat2-1-1) -- +(330:.7) node[anchor=west] {$p_{1,2}$};

  \end{tikzpicture}}

                Applying Theorem~\ref{thm:admissible} to this graph, we can compute the value of
                $W_{\sigma_1,\dots,\sigma_n}(\mu)$ by reduction to marked Hurwitz
                numbers and terms of the form
                $W_{\tau_1,\dots,\tau_m}(\mu')$ where $\ell(\mu')<\ell(\mu)$, for instance
                in the first example of this section.
                The base case where $\ell(\mu)=1$ can be solved via genus reduction.
                In principle, this gives us a complete (albeit potentially combinatorially difficult) method to compute any invariant $W_{\sigma_1,\dots,\sigma_n}(\mu)$.

                More generally, we can use the theorem to compute $W_{g,\sigma,\nu}$ when $m=\sum_{i=1}^n\ell(\nu_i)\geq 2$, by letting $A$ be a graph with a genus $g$ vertex attached to a genus $0$ vertex, and two of the
                marked points on the genus $0$ vertex. The following lemma will be useful:

                \begin{lem}
                  \label{lem:branchpoints}
                  Letting $A$ be as above, in Theorem~\ref{thm:admissible}, every admissible cover of graphs $\Gamma\to\Gamma'$ where $\Gamma$ contains a genus $g$ vertex (guaranteed when $g=1$) has precisely $2$ branch points
                  whose fiber does not intersect the genus $g$ vertex.
                \end{lem}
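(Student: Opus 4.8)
The plan is to combine the rigid combinatorial shape that the admissible-cover condition forces on the source dual graph $\Gamma$ with a dimension count coming from the demand that the pair $(\Gamma,\Gamma')$ genuinely contribute in Theorem~\ref{thm:admissible}, i.e.\ that $\phi\colon\mathcal H_{(\Gamma,\Gamma')}\to\mb_A$ be dominant.

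First I would determine the shape of $\Gamma$. Nodes of an admissible cover lie over nodes of the target, and by the proof of Theorem~\ref{thm:admissible} the target graph $\Gamma'$ has exactly one edge, so the target curve has exactly one node; hence every edge of $\Gamma$ joins a vertex over one target vertex $u'$ to a vertex over the other target vertex $w'$. Thus $\Gamma$ is bipartite, and in particular carries no self-loop. Since the arithmetic genus of the source equals $\sum_{v}g_v+h^1(\Gamma)=g$ and $\Gamma$ contains a vertex $v_g$ of genus $g$, all other vertices of $\Gamma$ have genus $0$ and $h^1(\Gamma)=0$; that is, $\Gamma$ is a tree with a single genus-$g$ vertex $v_g$. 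This also accounts for the parenthetical: when $g=1$ the only alternative is that every vertex of $\Gamma$ has genus $0$ and $\Gamma$ has a unique cycle (of even length, by bipartiteness), but then the forgetful stabilization of the source retains a non-separating node, so it lands in the boundary of $\mb_A$ rather than at its generic point and the term does not contribute; hence for $g=1$ we may assume $v_g$ exists.

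Next, say $v_g$ lies over the target vertex $u'$, with corresponding genus-$g$ source component $C_{v_g}\to X_{u'}$ of positive degree. For every branch point $q_i$ on $X_{u'}$ the fiber of $C_{v_g}\to X_{u'}$ is nonempty, and since the whole fiber of the cover over $q_i$ consists of the marked points $p_{i,j}$, this fiber meets $C_{v_g}$. Conversely, for $q_i$ on $X_{w'}$ the fiber over $q_i$ consists of smooth points lying on components over $X_{w'}$, so it is disjoint from $C_{v_g}$. Therefore the branch points whose fiber does not intersect $v_g$ are exactly the ones lying over $w'$; write $P_w$ for their number, $P_u$ for the number over $u'$, and $P=P_u+P_w$.

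Finally I would show $P_w=2$. The map $\mathcal H_{(\Gamma,\Gamma')}\to\mb_{\Gamma'}\cong\mb_{0,P_u+1}\times\mb_{0,P_w+1}$ is finite by Hurwitz theory, so $\dim\mathcal H_{(\Gamma,\Gamma')}=P-4$, and this equals $3g+m-4=\dim\mb_A$ by Lemma~\ref{lem:dim} (the genus-$0$ vertex $a_0$ of $A$, bearing two legs and the attaching node, is rigid, so $\mb_A\cong\mb_{g,m-1}$). Thus if $(\Gamma,\Gamma')$ contributes then $\phi$ is generically finite. But the stabilized source underlying $\phi$ depends only on the restriction of the cover to $X_{u'}$ together with its gluing points: the part over $X_{w'}$ is either forgotten entirely (when the $\nu$-marked points lie over $u'$) or collapsed onto the rigid component $a_0$ (when some lie over $w'$), so in either case moving the $P_w+1$ marked points on $X_{w'}$ does not change $\phi$. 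Hence $\phi$ factors through a quotient of dimension at most $\dim\mathcal H_{(\Gamma,\Gamma')}-\dim\mb_{0,P_w+1}=P_u-2$, so $\dim\phi(\mathcal H_{(\Gamma,\Gamma')})\le P_u-2$; dominance onto $\mb_A$ then forces $P_u-2\ge P-4$, i.e.\ $P_w\le 2$. Since stability of the vertex $w'$ (which carries $P_w$ legs and one edge) requires $P_w\ge 2$, we conclude $P_w=2$, which combined with the previous paragraph proves the lemma. The step I expect to be the main obstacle is this last one, specifically making rigorous that $\phi$ is insensitive to the $X_{w'}$-moduli: one must check that every point remembered by $\pi_\nu$ lying over $X_{w'}$ is carried by stabilization onto the two-pointed rigid component $a_0$ — which is where the precise shape of $A$ is used — and hence that $\mathcal H_{(\Gamma,\Gamma')}\to\mb_{0,P_w+1}$ is, up to finite covers, a product along which $\phi$ is constant; the case split according to whether the $\nu$-marked points sit over $X_{u'}$ or $X_{w'}$ is a minor accompanying point, handled uniformly by the fact that $\phi$ only ever extracts rigid information from the $X_{w'}$-side.
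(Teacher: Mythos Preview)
Your proof is correct and rests on the same core observation as the paper's, but the two are packaged differently enough to be worth a brief comparison.

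The paper's argument is shorter: it isolates the factor $\Hb_v$ of $\mathcal H_{(\Gamma,\Gamma')}$ corresponding to the genus-$g$ vertex $v$ and computes $\dim\Hb_v$ in two ways --- once via a finite map $\Hb_v\to\mb_{g,m-1}$ (remembering $C_v$ together with the $m-2$ fixed points and the node), giving $\dim\Hb_v=3g+m-4=n-4$, and once via the finite Hurwitz map $\Hb_v\to\mb_{0,k+1}$, giving $\dim\Hb_v=k-2$ --- and then equates them to get $k=n-2$ directly, with no separate inequality or stability step. Your route instead bounds $P_w$ from above via the factoring of $\phi$ through the $u'$-side and from below via stability of the target vertex $w'$. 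The two arguments meet at the same unproved step: the paper's assertion that $\Hb_v\to\mb_{g,m-1}$ is finite is exactly your claim that $\phi$ is insensitive to the $X_{w'}$-moduli, which you rightly flag as the crux. Your version has the virtue of making this dependency explicit and of supplying the structural analysis of $\Gamma$ (bipartite tree, existence of $v_g$ when $g=1$) that the paper leaves implicit; the paper's version is slicker in that it bypasses the stability bound and the inequality entirely by working with $\Hb_v$ rather than the full $\mathcal H_{u'}$.
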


                \begin{proof}
                  Let $\Gamma\to\Gamma'$ be an admissible cover of graphs where $\Gamma$ comes with a $\hat A$-structure and $\hat A$ specializes to $A$, and assume $\Gamma$ has a genus $g$ vertex $v$.
                  Then $\Hb_{(\Gamma,\Gamma')}$ has a factor $\Hb_{v}$ corresponding to the vertex $v$,
                  and there is a finite map $\Hb_{v}\to\mb_{g,m-1}$ which recalls
                  the vertex $v$ along with the $m-2$ fixed points on the genus $g$ vertex of $A$ and
                  the point corresponding to the node of $A$. Thus,
                  \[
                  \dim(\Hb_v)=\dim(\mb_{g,m-1})=3g-3+m-1
                  \]
                  By Lemma~\ref{lem:dim}, $\dim(\Hb_v)=n-4$. There is also a finite morphism
                  $\Hb_v\to\mb_{0,k+1}$ where $k$ is the number of proper branch points from this
                  component (and thus $k+1$ is the number of branch points including the node), so
                  \[
                  k-2=n-4\implies k=n-2
                  \]
                  and therefore the number of branch points not arising from the genus $g$ vertex is $n-(n-2)=2$.
                  \end{proof}

                \section{All twos case}
                \label{section:twos}

In this section we study the invariants $N_{(2^k),(2^k)}(\mu)$ defined for $d=2k$ even, arriving at a proof of Theorem~\ref{thm:twos}.

First, we establish the base case $\mu=(d)$, with two distinct proofs (one by Hurwitz theory, the other by genus reduction). We then prove the general result by induction with
our most powerful technique: establishing a recursion between these invariants by use of Theorem~\ref{thm:admissible}.

\subsection{Motivation}

The Gromov-Witten invariants of an elliptic curve are well understood (\cite{Completed}). However, a natural generalization
remains open: let $C$ be the quotient of an elliptic curve under the action $x\mapsto -x$.
This action has four fixed points (the two-torsion points in the group law), and so as an orbifold
$C$ can be identified as $\P^1$ with four points of stabilizer $\Z/2$. This orbifold $C$ is known as the
{\it pillowcase orbifold}. Maps $E\to C$ correspond to maps $E\to\P^1$ with ``even'' ramification over
the four special points; one can hope to use the degeneration formula (as developed in \cite{Degeneration}) to find
relative Gromov-Witten invariants of $C$, splitting $C$ into a union
of $\P^1s$ with two $\Z/2$-stabilizer points each and a node, and $N_{(2^k),(2^k)}(\mu)$ is a relative Gromov-Witten invariant
for each of these two special $\P^1s$.
In general, one can hope to understand the sequence of classes $\langle \mu\rangle_d^{C}\in H^{g-1}(\overline{\mathcal M}_{g,\ell(\mu)})$.

\subsection{$\mu=(d)$}

We will show that $N_{(2^k),(2^k)}(d)=3$
using two methods: first, by categorizing all such maps explicitly using Hurwitz theory and counting them, and second by performing a genus reduction.

\subsubsection{Combinatorial solution}
\label{section:comb}

\begin{figure}[h]
  \caption{Factored Hurwitz cover with even ramification, in the case where $k$ is odd}
  \centering
\begin{tikzpicture}
  \draw[domain=-6:7.5,samples=50,color=gray!50,thick] plot (\x, \x^3/64 - \x/2); 
  \draw[domain=-6:7.5,samples=50,color=gray!50,thick] plot (\x, -4); 
  \draw[domain=-6:7.5,samples=50,color=gray!50,thick] plot (\x, -8); 

  \foreach \p/\plabel/\qlabel in {-5/$p$/$q_1$, -1//$q_2$, 3//$q_3$, 7//$q_4$} {
    \filldraw[black] (\p, \p^3/64-\p/2) circle (0.6mm) node[above] {\plabel};
    \draw[->] (\p, \p^3/64-\p/2-.4) to["2"] (\p, -3.5);
    \filldraw[black] (\p, -4) circle (0.6mm) node[below] {\qlabel};
  }

  \draw[->] (-5, -4.5) to["$k$"] (-5, -7.5);

  \foreach \x/\l in {-4/$2$, -3/$\dots$, -2/$2$, -1/$1$} {
    \draw[->] (\x, -4.5) to["\l"] (-3/2+\x/4, -7.5);
  }

  \foreach \x/\l in {0/$2$, 1/$\dots$, 2/$2$, 3/$1$} {
    \draw[->] (\x, -4.5) to["\l"] (3/2+\x/4, -7.5);
  }

  \foreach \x/\l in {4/$1$, 5/$1$, 6/$\dots$, 7/$1$} {
    \draw[->] (\x, -4.5) to["\l"] (9/2+\x/4, -7.5);
  }

  \foreach \x/\l in {-5/$0$, -2/$1$, 2/$\infty$, 6/$w$} {
    \filldraw[black] (\x, -8) circle (0.6mm) node[below] {\l};
    }

\end{tikzpicture}

\label{fig:comb1}
\end{figure}
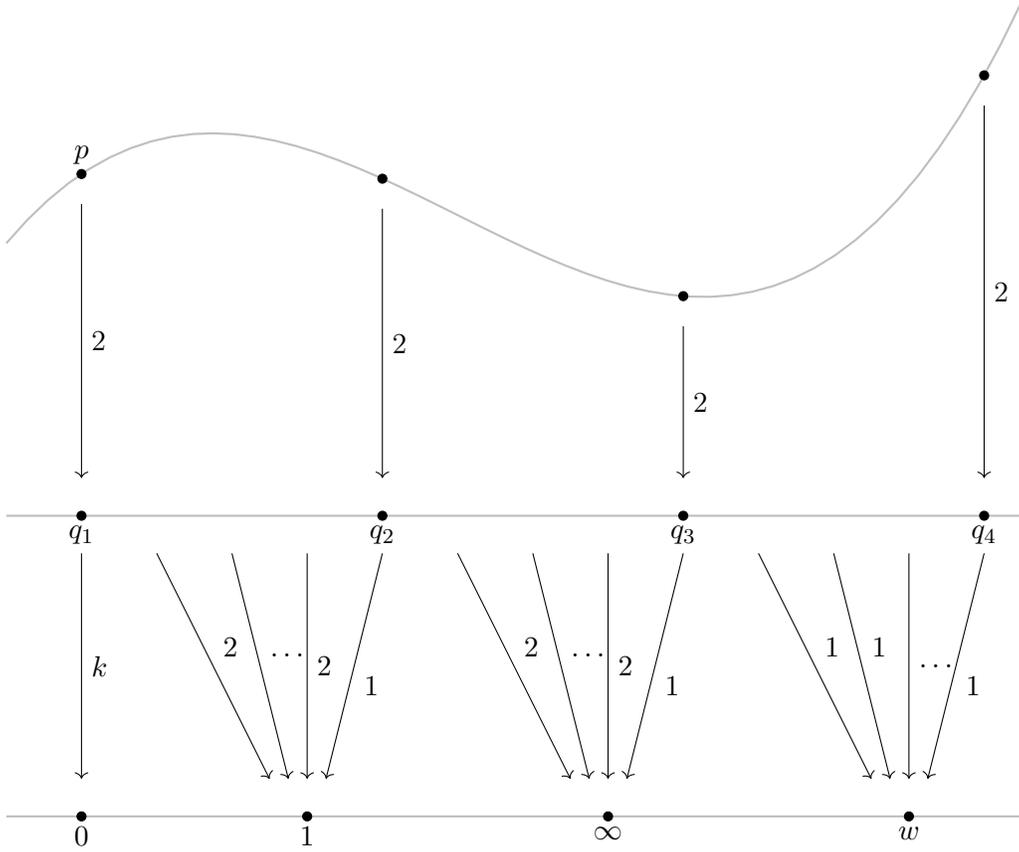

\begin{figure}[h]
  \caption{Factored Hurwitz cover with even ramification, in the case where $k$ is even}
  \centering
\begin{tikzpicture}
  \draw[domain=-6:7.5,samples=50,color=gray!50,thick] plot (\x, \x^3/64 - \x/2); 
  \draw[domain=-6:7.5,samples=50,color=gray!50,thick] plot (\x, -4); 
  \draw[domain=-6:7.5,samples=50,color=gray!50,thick] plot (\x, -8); 

  \foreach \p/\plabel/\qlabel in {-5/$p$/$q_1$, -4//$q_2$, -3//$q_3$, 5//$q_4$} {
    \filldraw[black] (\p, \p^3/64-\p/2) circle (0.6mm) node[above] {\plabel};
    \draw[->] (\p, \p^3/64-\p/2-.4) to["2"] (\p, -3.5);
    \filldraw[black] (\p, -4) circle (0.6mm) node[below] {\qlabel};
  }

  \draw[->] (-5, -4.5) to["$k$"] (-5, -7.5);

  \foreach \x/\l in {-4/$1$, -3/$1$, -2/$2$, -1/$\dots$, 0/$2$} {
    \draw[->] (\x, -4.5) to["\l"] (-3/2+\x/4, -7.5);
  }

  \foreach \x/\l in {1/$2$, 2/$\dots$, 3/$2$} {
    \draw[->] (\x, -4.5) to["\l"] (3/2+\x/4, -7.5);
  }

  \foreach \x/\l in {5/$1$, 6/$\dots$, 7/$1$} {
    \draw[->] (\x, -4.5) to["\l"] (9/2+\x/4, -7.5);
  }

  \foreach \x/\l in {-5/$0$, -2/$1$, 2/$\infty$, 6/$w$} {
    \filldraw[black] (\x, -8) circle (0.6mm) node[below] {\l};
    }

\end{tikzpicture}

\label{fig:comb2}
\end{figure}

\begin{thm}
  \label{thm:combinatorial}
  Let $f:E\to\P^1$ be of degree $d=2k$ such that $f(p)=0$ with full ramification,
  and $f$ has ramification profiles $(2^k)$ over $1$ and $\infty$. Then $f$ factors as
  $E\to\P^1\to\P^1$ where the first map is degree $2$ and the second is degree $k$.
\end{thm}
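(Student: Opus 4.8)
The plan is to reconstruct the factorization from the branching data by analyzing the monodromy of $f$. Recall that $f$ has full ramification at $p$ over $0$, ramification profile $(2^k)$ over both $1$ and $\infty$, and is simply branched (profile $(2,1^{d-2})$) at all other branch points. Let $g_0$ be the monodromy about $0$, so $g_0$ is a $d$-cycle; let $g_1$ be the monodromy about $1$ and $g_\infty$ about $\infty$, each a product of $k$ disjoint transpositions; and let $h_1,\dots,h_b$ be the simple transpositions at the remaining branch points. By the Riemann--Hurwitz formula the number $b$ of such points is determined, and in $S_d$ we have the relation $g_0 \, g_1 \, g_\infty \, h_1 \cdots h_b = 1$, with the group generated being transitive (since $E$ is connected).

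The first step is to exploit the $d$-cycle $g_0$: after relabeling the sheets, we may assume $g_0 = (1\,2\,\cdots\,d)$. The key observation is that any fixed-point-free involution $g_1$ that, together with $g_0$, generates a group contained in the centralizer structure forcing a degree-$2$ subcover must pair each sheet $i$ with $i+k \pmod d$ — that is, $g_1$ must be (a conjugate compatible with) the involution $\iota: i \mapsto i+k$. To see that $g_1$ is forced to have this form, I would argue that the product $g_0 \iota$ and the structure of the remaining simple branch points, which all lie over a single "generic" region, leaves no room for $g_1$ to be anything else: the simple transpositions $h_i$ can only connect sheets in a way compatible with the eventual genus-$1$ computation, and counting the contributions to the genus via Riemann--Hurwitz pins down $g_1$. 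This is where I expect the main obstacle to lie — making rigorous the claim that the combinatorics of the even ramification over two points, together with the $d$-cycle over $0$, forces $g_1$ (and hence $g_\infty$) to be compatible with the translation-by-$k$ involution. The cleanest route is probably to observe that $\langle g_0, \iota\rangle$ acts on $\{1,\dots,d\}$ with $\iota$ central of order $2$, so the quotient by $\langle \iota \rangle$ is well-defined; one then shows the whole monodromy group normalizes $\langle \iota\rangle$, which gives the quotient map of covers.

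Once $g_1 = \iota$ is established, the conclusion follows: the subgroup $\langle \iota \rangle \cong \Z/2$ is normalized by the full monodromy group (one checks $g_0 \iota g_0^{-1} = \iota$ since $\iota$ is translation by $k$ and $g_0$ is translation by $1$ in $\Z/d$; $g_\infty$ is determined by the relation and inherits the property; and each simple transposition $h_i$, being supported away from the pattern, normalizes it as well — this last point is the residual technical check). A monodromy group normalizing a subgroup $K$ yields a factorization of the cover through the intermediate cover corresponding to $K$: we get $E \to E/\langle\iota\rangle \to \P^1$ where $E/\langle\iota\rangle \to \P^1$ has degree $k$ and $E \to E/\langle\iota\rangle$ has degree $2$. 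Since the intermediate curve is a connected cover of $\P^1$ branched at enough points to have genus $0$ (its branch behavior, computed by Riemann--Hurwitz using that all ramification of $E\to\P^1$ "downstairs" from even profiles becomes unramified or mild), it is $\P^1$, giving exactly the asserted factorization $E \to \P^1 \to \P^1$ with degrees $2$ and $k$. Figures~\ref{fig:comb1} and \ref{fig:comb2} illustrate the two parity cases for $k$, which differ only in how the branch points over $1$ and $\infty$ interact with the degree-$2$ map but not in the structure of the argument.
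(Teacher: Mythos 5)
Your strategy (a direct monodromy argument forcing imprimitivity) is genuinely different from the paper's proof, which never analyzes the monodromy group: it fixes the branch points $0,1,\infty,w$, computes the Hurwitz count of all covers of the required type, namely $H((d),(2^k),(2^k),(2,1^{d-2}))=k/2$ by Lemma~\ref{lem:twoscomplete}, computes the orbifold count of \emph{factored} covers over the same branch locus (also $k/2$, via Lemmas~\ref{lem:twosodd} and \ref{lem:twoseven} for the degree-$k$ map and $H((2),(2),(2),(2))=\tfrac 12$ for the degree-$2$ map), and concludes that the factored covers already exhaust all covers. A monodromy proof could in principle work, but your proposal has a genuine gap exactly where you flag it, and the intermediate claim you propose to establish is false. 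With $g_0=(1\,2\,\cdots\,d)$ it is \emph{not} true that $g_1$ must be the translation $\iota:i\mapsto i+k$. Take $d=4$, $g_0=(1\,2\,3\,4)$, $g_1=(1\,2)(3\,4)$, $g_\infty=(1\,3)(2\,4)$, $h=(2\,4)$; composing right to left one checks $g_0g_1g_\infty h=\mathrm{id}$, the cycle types are $(4),(2,2),(2,2),(2,1,1)$, the group generated is the dihedral group of order $8$ (so transitive), and Riemann--Hurwitz gives genus $1$ --- yet $g_1\neq\iota$, and no relabelling of sheets preserving $g_0$ (conjugation by a power of $g_0$) makes $g_1=\iota$. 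So ``Riemann--Hurwitz pins down $g_1$'' cannot be made rigorous; the even profiles over $1$ and $\infty$ together with the $d$-cycle do not determine $g_1$ individually.

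What you actually need is weaker and is essentially the full content of the theorem: the existence of \emph{some} fixed-point-free involution of the sheets centralizing the whole monodromy group $G$, equivalently a $G$-invariant partition into $k$ blocks of size $2$. In the example above that involution is $g_\infty=(1\,3)(2\,4)$ (the center of the dihedral group), not $g_1$, which shows the involution need not be located where your argument places it. Your proposal offers no argument for the existence of such an involution beyond the assertion about $g_1$, so the quotient step producing $E\to E/\langle\iota\rangle\to\P^1$ never gets started. A further small warning: your ``residual technical check'' that a simple transposition ``supported away from the pattern'' normalizes $\langle\iota\rangle$ is backwards --- a transposition commutes with a fixed-point-free involution $\iota$ only when its support is exactly one of the pairs $\{i,\iota(i)\}$, so the extra simple branch point is constrained rather than harmless. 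To salvage the approach you would have to prove directly that a $d$-cycle, two fixed-point-free involutions, and one transposition with product the identity always generate a group admitting a block system with blocks of size $2$; the paper sidesteps precisely this by the counting comparison described above.
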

\begin{proof}
  Figures \ref{fig:comb1} and \ref{fig:comb2} illustrate how such maps should factor. We prove the claim by fixing branch points $0,1,\infty,w$ in $\P^1$, counting the (orbifold) number of factored
  maps over these branch points as well as the maps in Figure \ref{fig:twos}, and showing that these are equal.

\begin{figure}[h]
  \caption{Hurwitz covers with even ramification}
  \centering
\begin{tikzpicture}
  \draw[domain=-6:7.5,samples=50,color=gray!50,thick] plot (\x, \x^3/64 - \x/2); 
  \draw[domain=-6:7.5,samples=50,color=gray!50,thick] plot (\x, -4); 

  \filldraw[black] (-5, .55) circle (0.6mm) node[above] {$p$};

  \draw[->] (-5, 0) to["$d$"] (-5, -3.5);

  \foreach \x/\l in {-3/$2$, -2/$\dots$, -1/$2$} {
    \draw[->] (\x, \x^3/64-\x/2-.4) to["\l"] (-3/2+\x/4, -3.5);
  }

  \foreach \x/\l in {1/$2$, 2/$\dots$, 3/$2$} {
    \draw[->] (\x, \x^3/64-\x/2-.4) to["\l"] (3/2+\x/4, -3.5);
  }

  \foreach \x/\l in {4/$2$, 5/$1$, 6/$\dots$, 7.2/$1$} {
    \draw[->] (\x, \x^3/64-\x/2-.4) to["\l"] (9/2+\x/4, -3.5);
  }

  \foreach \x/\l in {-5/$0$, -2/$1$, 2/$\infty$, 6/$w$} {
    \filldraw[black] (\x, -4) circle (0.6mm) node[below] {\l};
    }

\end{tikzpicture}

\label{fig:twos}
\end{figure}
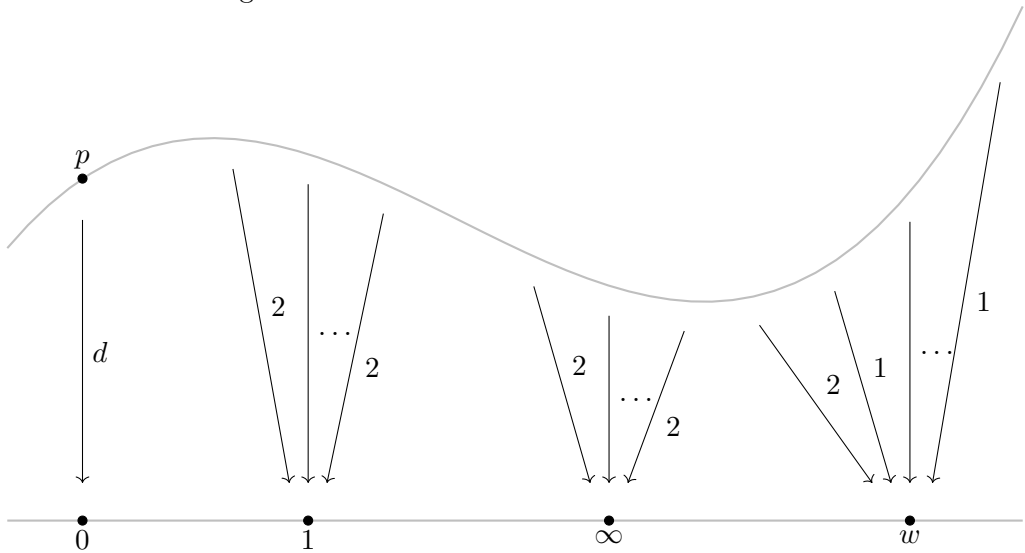

  The latter count is given by
  \[
  H((d),(2^k),(2^k),(2,1,\dots,1))
  \]
  which is $k/2$ by Lemma~\ref{lem:twoscomplete}. For the former count, we
  first consider maps $\P^1\to\P^1$  (with $0,1,\infty,w$ fixed)
  described in Figure \ref{fig:comb1} or \ref{fig:comb2}, depending on the parity of $k$:

  \begin{itemize}
  \item If $k$ is odd, the orbifold number of such maps is $H((k),(2^{(k-1)/2},1),(2^{(k-1)/2},1))$ which
    equals $1$ by Lemma~\ref{lem:twosodd}. There are $k$ choices for $q_1,q_2,q_3,q_4$
    since $q_2$ and $q_3$ are determined and there are $k$ choices for $q_4$.
  \item If $k$ is even, the orbifold number of such maps is $H((k),(2^{k/2-1},1,1),(2^{k/2}))$ which equals $\frac 12$ by Lemma~\ref{lem:twoseven}. There are $2k$ choices for $q_1,q_2,q_3,q_4$
    since there are two choices for labeling $q_2$ and $q_3$, and there are $k$ choices for
    $q_4$.
  \end{itemize}
  In either case, the orbifold number of such covers with $q_1,q_2,q_3,q_4$ labeled is $k$.

  Once $q_1,q_2,q_3,q_4$ are fixed, the count for the upstairs map $E\to\P^1$ is
  $H((2),(2),(2),(2))=\frac 12$. Thus, the orbifold number of factored maps lying over $0,1,\infty,w$ is $\frac 12\cdot k$, proving the claim.
  
\end{proof}

Now given this result, we can compute $N_{(2^k),(2^k)}(d)$ by counting
the number of factored maps described in Figures \ref{fig:comb1} and \ref{fig:comb2} for a fixed $(E,p)$.
The map $(E,p)\to(\P^1,q_1)$ is the elliptic map to $\P^1$ and therefore unique up
to automorphisms of $\P^1$.
There are $3$ choices for the point
$q_4$, and we consider each parity case:
\begin{itemize}
\item If $k$ is odd, then the orbifold count of maps
  $\P^1\to\P^1$ with ramification profiles $(k)$ over $0$,
  $(2^{(k-1)/2},1)$ over $1$, and $(2^{(k-1)/2},1)$ over $\infty$
  is $1$. There is a unique way to assign $q_2$ lying over $1$
  and $q_3$ lying over $\infty$.
  \item
  If $k$ is even, then the orbifold count of maps $\P^1\to\P^1$ with
  ramification profiles $(k)$ over $0$, $(2^{k/2-1},1,1)$ over $1$, and
  $(2^{k/2})$ over $\infty$ is $\frac 12$. There are two choices for
  labeling $q_2$ and $q_3$, so with $q_1,q_2,q_3$ fixed, the number
  of covers is $\frac 12\cdot 2=1$.
\end{itemize}
In either case, $w$ is determined as the image of $q_4$.
Therefore,
\[
N_{(2^k),(2^k)}(d)=3.
\]

\subsubsection{Genus reduction}

We apply the method of Section~\ref{section:admissible} to yield another proof that
$N_{(2^k),(2^k)}(d)=3$:

 Let $A$ be a stable graph consisting of a genus $0$ vertex with a loop and a marked point $p_{1,1}$.
  Let $\hat A$ be the stable graph with additional marked points on the genus $0$ vertex for
  the further ramification.

There are two kinds of
admissible covers $\Gamma\to\Gamma'$ with $\Gamma$ specializing to $\hat A$. The first
is of the form:

  \scalebox{\BRANCHEDFACTOR}{
\begin{tikzpicture}[thick,amat/.style={matrix of nodes,nodes in empty cells,
  row sep=2.5em,rounded corners,
  nodes={draw,solid,circle,minimum size=1.0cm}},
  dmat/.style={matrix of nodes,nodes in empty cells,row sep=2.5em,nodes={minimum size=1.0cm},draw=myred},
  fsnode/.style={fill=myblue},
  ssnode/.style={fill=mygreen}]

  \matrix[amat,nodes=fsnode] (mat1) {$0$\\};

    \matrix[dmat,left=1cm of mat1] (degrees1) {$d$\\};

 \matrix[amat,right=4cm of mat1,nodes=ssnode] (mat2) {$0$\\};

 \matrix[dmat,right=1cm of mat2] (degrees2) {$d$\\};

 \draw  (mat1-1-1) edge["$a$",bend left] (mat2-1-1)
 (mat1-1-1) edge["$b$"] (mat2-1-1);

 \draw (mat1-1-1) -- +(95:.7) node[anchor=south] {$p_{1,1}$}
 (mat1-1-1) -- +(245:.7) node[anchor=north] {$(2,1^{d-2})$};

 \draw  (mat2-1-1) -- +(30:.7) node[anchor=west] {$(2^k)$}
 (mat2-1-1) -- +(330:.7) node[anchor=west] {$(2^k)$};

            \end{tikzpicture}}

            There are two $\hat A$-structures, corresponding to a choice
            of edge to contract, and so the sum of $c_{\Gamma}$ over these $\hat A$-structures is $a+b=d$. The right-hand count here under Theorem~\ref{thm:admissible} is
            \begin{align*}
              \overline H((d),(a,b),(2,1^{d-2}))\cdot \overline H((2^k),(2^k),(a,b))\cdot d\cdot\frac 1{|\Aut(a,b)|}
            \end{align*}
            By Lemma~\ref{lem:twoscycles} in Appendix~\ref{appendix:hurwitz}, the second term vanishes unless $a=b=k$, in which case
            the count (by Claims~\ref{lem:twoscycles} and \ref{lem:Hurwitz2}) is
            \[
            (1\cdot (d-2)!)\cdot \left(\frac 1d\cdot k!\cdot k!\cdot 2\right)\cdot d\cdot\frac 12
            =(d-2)!\cdot (k!)^2
            \]

            Now the second picture is of the form:

            \scalebox{\BRANCHEDFACTOR}{
              \begin{tikzpicture}[thick,amat/.style={matrix of nodes,nodes in empty cells,
  row sep=2.5em,rounded corners,
  nodes={draw,solid,circle,minimum size=1.0cm}},
  dmat/.style={matrix of nodes,nodes in empty cells,row sep=2.5em,nodes={minimum size=1.0cm},draw=myred},
  fsnode/.style={fill=myblue},
  ssnode/.style={fill=mygreen}]

  \matrix[amat,nodes=fsnode] (mat1) {$0$\\};

    \matrix[dmat,left=1cm of mat1] (degrees1) {$d$\\};

 \matrix[amat,right=4cm of mat1,nodes=ssnode] (mat2) {$0$\\
      $0$\\
      $\vdots$\\
  $0$\\};

 \matrix[dmat,right=1cm of mat2] (degrees2) {$2$\\
   $2$ \\
    \vdots \\
    $2$\\};

 \draw  (mat1-1-1) edge["$1$",bend left] (mat2-1-1)
 (mat1-1-1) edge["$1$"] (mat2-1-1)
 (mat1-1-1) edge["$2$"] (mat2-2-1)
 (mat1-1-1) edge["$2$"] (mat2-4-1);

 \draw (mat1-1-1) -- +(95:.7) node[anchor=south] {$p_{1,1}$}
 (mat1-1-1) -- +(245:.7) node[anchor=north] {$(2^k)$};

 \draw  (mat2-1-1) -- +(30:.7) node[anchor=west] {$2$}
 (mat2-1-1) -- +(330:.7) node[anchor=west] {$2$}
 (mat2-2-1) -- +(330:.7) node[anchor=west] {$2$}
 (mat2-4-1) -- +(330:.7) node[anchor=west] {$2$};

            \end{tikzpicture}}

            With markings remembered, the number of such diagrams is
            \[
            2\cdot k\cdot\binom{d-2}{2}\cdot\binom{d-4}{2}\cdot\dots\cdot\binom 22=2k\cdot\frac{(d-2)!}{2^{k-1}}
            \]
            There are two $\hat A$-structures, each with associated coefficient $2^{k-1}$,
            and so the combined contribution to the right-hand side of Theorem~\ref{thm:admissible} is
            \begin{align*}
              &2k\cdot\frac{(d-2)!}{2^{k-1}}\cdot\overline H((d),(2^k),(2^{k-1},1,1))\cdot \overline H((1,1),(2),(2))\cdot \overline H((2),(2),(1,1))^{k-1}\cdot\frac 12\cdot 2\cdot 2^{k-1} \\
              &=2k\cdot (d-2)!\cdot (k!\cdot (k-1)!) \\
              &=2(d-2)!\cdot (k!)^2
            \end{align*}
            We conclude by adding up and seeing that the right-hand side of Theorem~\ref{thm:admissible} is
            \begin{align*}
              (d-2)!\cdot (k!)^2+ 2(d-2)!\cdot (k!)^2=3(d-2)!\cdot (k!)^2&\implies W_{(2^k),(2^k),(2,1^{d-2})}(d)=6(d-2)!\cdot (k!)^2 \\
              &\implies N_{(2^k),(2^k)}(d)=3.
            \end{align*}

\subsection{Recursion and general solution}

We now aim to find a recursion for the terms $N_{(2^k),(2^k)}(\mu)$. In particular we will show:

\begin{thm}
  If $a>b$, $\mu=(a,b,\mu')$ and $d'=|\mu'|$, then
  \[
  N_{(2^k),(2^k)}(a,b,\mu')=N_{(2^k),(2^k)}(a+b,\mu')+n_{a,b,d'}N_{(2^{k-b}),(2^{k-b})}(a-b,\mu')
  \]
  where \[n_{a,b,d'}=\begin{cases}
  3 & a-b=2,\ d'=0 \\
  6 & a-b=1,\ d'=1 \\
  1 & d-2b>2 \end{cases}
  \]
  and
  \[
  N_{(2^k),(2^k)}(a,a,\mu')=
  N_{(2^k),(2^k)}(2a,\mu')+
  \begin{cases}
    2N_{(2^{k-a}),(2^{k-a})}(\mu') & |\mu'|>2 \\
    3 & |\mu'|=0 \\
    6 & \mu'=(2) \\
    12 & \mu'=(1,1)
    \end{cases}
  \]
  \end{thm}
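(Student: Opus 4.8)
The plan is to apply Theorem~\ref{thm:admissible} with $A$ the ``merging two points'' graph: a genus~$1$ vertex joined by an edge to a genus~$0$ vertex carrying the two legs recording the parts $a$ and $b$ of $\mu$, so that $|\Aut A|=1$. By the proof of Theorem~\ref{thm:admissible}, $\Gamma'$ is a chain of two genus~$0$ vertices, and by Lemma~\ref{lem:branchpoints} the genus~$1$ vertex $v$ of $\Gamma$ meets all but exactly two of the $n=\ell(\mu)+3$ branch fibers (here $n$ already includes the $\ell(\mu)$ simple branch points forced by Lemma~\ref{lem:dim}; throughout I use the convention from the problem statement that $N_{(2^j),(2^j)}(-)$ silently carries whatever simple ramification Riemann--Hurwitz requires). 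Since $p_{1,1},p_{1,2}$ lie over $0$ on the genus~$0$ side of $A$, the fiber over $0$ meets a genus~$0$ bubble $B\subset\Gamma$; as the rest of the fiber over $0$ is marked and lies on $v$'s curve, $B$ has degree exactly $a+b$ over $0$, with profile $(a,b)$ there.

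The key step is a local Riemann--Hurwitz analysis of $B$ (and of any chain of genus~$0$ bubbles connecting it to $v$). Because $\sigma_1=\sigma_2=(2^k)$ is maximally rigid, the profiles of $B$ over $1$ and over $\infty$ can only involve $2$'s and $1$'s, and combining this with the requirement that the source have arithmetic genus $1$ (so that $\Gamma$ is a tree away from $v$) leaves exactly two families. In the first, $B$ carries only $(a,b)$ over $0$ and is glued to $v$ through a point of ramification $a+b$; here the two points $p_{1,1},p_{1,2}$ have fused into a single point of index $a+b$ over $0$, and the residual genus~$1$ count is $N_{(2^k),(2^k)}(a+b,\mu')$. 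In the second, $B$ additionally swallows $b$ of the $k$ transpositions over $1$ and $b$ of those over $\infty$, so that on $B$ the profile over $1$ and over $\infty$ is $(2^b,1^{a-b})$; Riemann--Hurwitz then forces the gluing point to $v$ to have ramification $a-b$ (this is where $a>b$ enters), and the residual genus~$1$ count is $N_{(2^{k-b}),(2^{k-b})}(a-b,\mu')$. When instead $a=b$ the gluing index drops to $0$, meaning $B$ detaches as a genuine connected component --- a genus~$0$, degree~$2a$ cover with profile $(a,a)$ over $0$ and $(2^a)$ over each of $1$ and $\infty$ --- and the residual count loses the fused point entirely, becoming $N_{(2^{k-a}),(2^{k-a})}(\mu')$; this is the structural reason the two displayed recursions differ, and the $a=a$ formula is obtained by running the same argument in this degenerate configuration.

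Summing the two families produces the shape of both recursions, so what remains is to pin down the multiplicities. This means combining: the coefficient $c_\Gamma$ (the product of labels of the edges contracted in the $\hat A$-structure), $|\Aut\Gamma|$, the number of $\hat A$-structures on $\Gamma$, the binomial count of ways to choose the swallowed transpositions, the Hurwitz numbers attached to $B$ (evaluated via the lemmas of Appendix~\ref{appendix:hurwitz}), and the labeled-to-unlabeled conversion factor $|\Aut(\sigma-\nu)|$ relating the $W$'s of Theorem~\ref{thm:admissible} to the $N$'s in the statement. One checks that the first family collapses to the single term $N_{(2^k),(2^k)}(a+b,\mu')$ and the second to $n_{a,b,d'}\cdot N_{(2^{k-b}),(2^{k-b})}(a-b,\mu')$ with $n_{a,b,d'}=1$ once $d-2b=a-b+d'>2$; in the boundary ranges $a-b\le 2$ or $d'\le 2$ the bubble $B$ or the residual curve no longer has ``room'' for the generic number of simple branch points, the relevant Hurwitz numbers must be evaluated directly, and the answer jumps to $3$ (when $a-b=2$, $d'=0$), $6$ (when $a-b=1$, $d'=1$), and correspondingly $2$, $3$, $6$, $12$ in the equal-parts cases; the base case $N_{(2^k),(2^k)}(d)=3$ of Theorem~\ref{thm:combinatorial} (hence of Theorem~\ref{thm:twos}) feeds into these evaluations.

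The main obstacle is exactly this bookkeeping of multiplicities together with the degenerate small cases. Concretely one must (a) prove that no other $(\Gamma,\Gamma')$ contribute --- in particular that the swallowed transpositions must split evenly between $1$ and $\infty$, and that no stray genus~$0$ bubble over $v$'s image can survive --- which leans on the ``exactly two missed branch points'' constraint of Lemma~\ref{lem:branchpoints} and, essentially, on the all-twos rigidity; (b) correctly track the $2$-power conversions between labeled and unlabeled invariants, which is where most of the arithmetic lives; and (c) handle $a-b\in\{1,2\}$, $d'\in\{0,2\}$ and $\mu'=(1,1)$ by hand, since there the clean Riemann--Hurwitz count of free simple branch points is unavailable and one inserts small Hurwitz numbers and the already-established base case directly.
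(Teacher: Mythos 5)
Your overall strategy (apply Theorem~\ref{thm:admissible} to the merging-two-points graph $A$ and sort the contributing $(\Gamma,\Gamma')$ by Riemann--Hurwitz) is the paper's strategy, but the configuration analysis at the heart of your argument is wrong, and the coefficients that constitute the actual content of the theorem are asserted rather than derived. The central problem is your ``second family.'' You keep both $p_{1,1}$ and $p_{1,2}$ on a single bubble $B$, glued directly to the genus $1$ vertex $v$, with profile $(a,b)$ over $0$ and $(2^b,1^{a-b})$ over both $1$ and $\infty$. Since $B$ is adjacent to $v$ and $\Gamma'$ has a single edge, $B$ lies over the target vertex \emph{not} containing the image of $v$; your configuration would then place three branch points ($0$, $1$ and $\infty$) on that vertex, whose fibers all miss $v$, contradicting Lemma~\ref{lem:branchpoints} (exactly two such branch points), which you yourself invoke. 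The term $N_{(2^{k-b}),(2^{k-b})}(a-b,\mu')$ actually arises from a configuration you never consider: $p_{1,1}$ and $p_{1,2}$ lie on \emph{different} components over the non-$v$ target vertex (of degrees $a$ and $b$), and a separate genus $0$ bubble of degree $2b$ on $v$'s side of the target carries $(2^b)$ from each even fiber, attaching by degree-$b$ edges to both of those components, with $v$ attached to $p_{1,1}$'s component by an edge of index $a-b$; the hypothesis $a>b$ enters through the vanishing of $\overline H((2^e),(2^e),(2e-b,b))$ unless $b=e$ (Lemma~\ref{lem:twoscycles}), not through a Riemann--Hurwitz computation on a single bubble. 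Relatedly, your opening claim that the rest of the $\mu$-fiber ``lies on $v$'s curve'' while $p_{1,1},p_{1,2}$ sit on $B$ cannot hold in $\Gamma$ itself (the whole fiber over $0$ lies over one target vertex); it is only true after stabilization, and this conflation is what lets the impossible picture slip in.

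The $a=b$ case has the same defect: letting the gluing index ``drop to $0$'' so that $B$ ``detaches as a genuine connected component'' is not a legitimate degeneration, because the source of an admissible cover in $\Hb_{1,\sigma}$ is a connected genus $1$ curve. The extra term $2N_{(2^{k-a}),(2^{k-a})}(\mu')$ (and the values $6$, $12$) comes from a genuinely different configuration in which the entire $\mu$-fiber lies over the \emph{same} target vertex as $v$, on a degree-$2a$ bubble carrying $(2^a)$ over both even branch points, joined through right-side bubbles that carry the two non-$v$ (simple) branch points; and the value $3$ in the $|\mu'|=0$ case comes from yet another picture in which a full $(2^k)$ fiber sits on the non-$v$ side. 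None of these is identified in your proposal. Finally, the coefficients $n_{a,b,d'}\in\{1,3,6\}$ and $\{2,3,6,12\}$ --- which are precisely what the theorem asserts --- are deferred with ``one checks'' and ``the answer jumps to,'' so even granting the corrected configurations, the multiplicity bookkeeping (choices of which fibers supply the even ramification on the bubble, $c_\Gamma$, $|\Aut(\Gamma)|$, and the labeled-to-unlabeled conversion) that occupies most of the paper's proof is missing.
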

\begin{proof}
  We will use Theorem~\ref{thm:admissible} to compute $W_{(2^k),(2^k),(2,1^{d-2})^{\ell(\mu)}}(a,b,\mu')$,
  letting $A$ be the graph with a genus $1$ vertex containing
  $p_{1,3},\dots,p_{1,\ell(\mu)}$ connected to a genus $0$ vertex containing $p_{1,1}$ and $p_{1,2}$.
  Any admissible cover of graphs $\Gamma\to\Gamma'$ has precisely two branch points that do
  not arise from the genus $1$ vertex of $\Gamma$, by Lemma~\ref{lem:branchpoints}.

  Suppose the points $p_{1,1},\dots,p_{1,\ell(\mu)}$ appear on the right side of $\Gamma$, i.e.
  their images are not on the same component as the image of the genus $1$ vertex. If
  $p_{1,1}$ and $p_{1,2}$ are on the same component, then we have the picture:

    \scalebox{\BRANCHEDFACTOR}{
  \begin{tikzpicture}[thick,amat/.style={matrix of nodes,nodes in empty cells,
  row sep=2.5em,rounded corners,
  nodes={draw,solid,circle,minimum size=1.0cm}},
  dmat/.style={matrix of nodes,nodes in empty cells,row sep=2.5em,nodes={minimum size=1.0cm},draw=myred},
  fsnode/.style={fill=myblue},
  ssnode/.style={fill=mygreen}]

  \matrix[amat,nodes=fsnode] (mat1) {$1$\\
    $0$\\
    \vdots \\
  $0$\\};

  \matrix[dmat,left=2cm of mat1] (degrees1) {$d-\sum c_i$\\
    $c_1$\\
    \vdots \\
  $c_r$\\};

 \matrix[amat,right=7cm of mat1,nodes=ssnode] (mat2) {$0$\\
   \vdots\\
   $0$ \\
   $0$\\};
 \matrix[dmat,right=1.5cm of mat2] (degrees2) {$\mu_{\ell(\mu)}$\\
   \vdots\\
   $\mu_3$ \\
 $a+b$\\};

 \draw  (mat1-1-1) edge["$\mu_{\ell(\mu)}$"] (mat2-1-1)
 (mat1-1-1) edge["$\mu_{3}$"] (mat2-3-1)
 (mat1-1-1) edge["$a+b-\sum c_i$" shift={(1.2,-1)}] (mat2-4-1)
 (mat1-2-1) edge["$c_1$" shift={(0,-0.7)}] (mat2-4-1)
 (mat1-4-1) edge["$c_r$"] (mat2-4-1);

 \draw  (mat2-1-1) -- +(30:.7) node[anchor=west] {$p_{1,\ell(\mu)}$}
 (mat2-3-1) -- +(30:.7) node[anchor=west] {$p_{1,3}$}
 (mat2-4-1) -- +(30:.7) node[anchor=west] {$p_{1,2}$}
 (mat2-4-1) -- +(330:.7) node[anchor=west] {$p_{1,1}$};

  \end{tikzpicture}}

  Riemann-Hurwitz for the bottom-right vertex tells us that there must be
  \[
  2(a+b)-2-(a+b-2)-\left(a+b-\sum c_i-1+\sum(c_i-1)\right)=1+r
  \]
  additional ramification on that vertex.
  If $\ell(\mu')=0$ and there is a ramification profile of $(2^k)$ on the right side,
  then $1+r=k$ and $r=k-1$ with $a+b=2k$. Since there must be a ramification
  profile $(2^k)$ on the left, we have $c_i=2$ for all $i$ and the following
  picture:

    \scalebox{\BRANCHEDFACTOR}{
    \begin{tikzpicture}[thick,amat/.style={matrix of nodes,nodes in empty cells,
  row sep=2.5em,rounded corners,
  nodes={draw,solid,circle,minimum size=1.0cm}},
  dmat/.style={matrix of nodes,nodes in empty cells,row sep=2.5em,nodes={minimum size=1.0cm},draw=myred},
  fsnode/.style={fill=myblue},
  ssnode/.style={fill=mygreen}]

  \matrix[amat,nodes=fsnode] (mat1) {$1$\\
    $0$\\
    \vdots \\
  $0$\\};

  \matrix[dmat,left=2cm of mat1] (degrees1) {$2$\\
    $2$\\
    \vdots \\
  $2$\\};

  \matrix[amat,right=7cm of mat1,nodes=ssnode] (mat2) {$0$\\};
  
 \matrix[dmat,right=1.5cm of mat2] (degrees2) {$d$\\};

 \draw  (mat1-1-1) edge["$2$"] (mat2-1-1)
 (mat1-2-1) edge["$2$"] (mat2-1-1)
 (mat1-4-1) edge["$2$"] (mat2-1-1);

 \draw  (mat1-1-1) -- +(50:.7) node[anchor=south] {$2$}
 (mat1-1-1) -- +(110:.7) node[anchor=south] {$2$}
 (mat1-1-1) -- +(170:.7) node[anchor=east] {$2$}
 (mat1-2-1) -- +(50:.7) node[anchor=south] {$2$}
 (mat1-4-1) -- +(50:.7) node[anchor=south] {$2$};

 \draw  (mat2-1-1) -- +(50:.7) node[anchor=south] {$p_{1,1}$}
 (mat2-1-1) -- +(115:.7) node[anchor=south] {$p_{1,2}$}
 (mat2-1-1) -- +(260:.7) node[anchor=north] {$(2^k)$};

    \end{tikzpicture}}

    By Lemma~\ref{lem:twoscycles}, the Hurwitz number $H((2^k),(2^k),(a,b))$ on the right
    side vanishes unless $a=b$.
    If $a=b$, then the contribution of this picture to $W_{(2^k),(2^k),(2,1^{d-2})^{\ell(\mu)}}(a,a)$ is
    \begin{align*}
      &2\cdot k\cdot \binom{d-2}{2,\dots,2}^2\cdot W_{(2),(2),(2)}(2)\cdot \overline H((2),(2),(1,1),(1,1))^{k-1}\cdot \overline H((2^k),(2^k),(k,k))\cdot 2^{k-1} \\
      &=2k\cdot\frac{((d-2)!)^2}{2^{2k-2}}\cdot 6\cdot 2^{k-1}\cdot (k!)^2\cdot\frac 1k\cdot 2^{k-1} \\
      &=2\cdot 6\cdot ((d-2)!)^2\cdot\frac{2^{2k-2}}{2^{2k-2}}\cdot (k!)^2 \\
      &=12((d-2)!)^2\cdot (k!)^2
      \end{align*} 
    so the contribution to $N_{(2^k),(2^k)}(a,a)$ is
    $\frac{12((d-2)!)^2(k!)^2}{(k!)^2\cdot 2\cdot ((d-2)!)^2\cdot 2!}=3$ (corresponding to a choice of nontrivial 2-torsion point).
  
  Otherwise if $\ell(\mu')>0$ there cannot be a ramification profile of $(2^k)$
  on the right side, the additional ramification must be simple, and so $r=0$.
  Upon
  choosing which fiber of simple branching will appear on the right side and how it will appear,
  for which there are
  \[
  C=\ell(\mu)\cdot \binom{d-2}{a+b-2,\mu_3,\dots,\mu_{\ell(\mu)}}
  \]

  choices, there is a unique such map $\Gamma\to\Gamma'$, and $\Gamma$ has a unique $\hat A$-structure
  with $c_{\Gamma}=\prod\limits_{i=3}^{\ell(\mu)}\mu_i$, so the corresponding contribution to the right side of Theorem~\ref{thm:admissible} in this case is
  \begin{align*}
    &C\cdot W_{(2^k),(2^k),(2,1^{d-2})^{\ell(\mu)-1}}(a+b,\mu') \prod_{i=3}^{\ell(\mu)}\overline H((\mu_i),(\mu_i),(1^{\mu_i})) \cdot \overline H((a+b),(a,b),(2,1^{a+b-2})) c_{\Gamma} \\
    &=\frac{\ell(\mu)(d-2)!}{(a+b-2)!\mu_3!\cdots\mu_{\ell(\mu)}!}\cdot (k!)^2\cdot 2\cdot((d-2)!)^{\ell(\mu)-1} (\ell(\mu)-1)! N_{(2^k),(2^k)}(a+b,\mu')\cdot \prod_{i=3}^{\ell(\mu)}\mu_i!\cdot (a+b-2)! \tag*{(using Lemma~\ref{lem:Hurwitz2})} \\
    &=(\ell(\mu))!\cdot (k!)^2\cdot 2\cdot((d-2)!)^{\ell(\mu)}\cdot N_{(2^k),(2^k)}(a+b,\mu') \\
  \end{align*}

  and the left side is the $W$ invariant of interest, since $A$ has no nontrivial automorphisms. Thus,
  the contribution to $N_{(2^k),(2^k)}(a,b,\mu')$ is
  \begin{align*}
    \frac{(\ell(\mu))!\cdot (k!)^2\cdot 2\cdot((d-2)!)^{\ell(\mu)}\cdot N_{(2^k),(2^k)}(a+b,\mu')}{(k!)^2\cdot 2\cdot((d-2)!)^{\ell(\mu)}\cdot (\ell(\mu))!} =N_{(2^k),(2^k)}(a+b,\mu').
  \end{align*}

  Next, consider the case where $p_{1,1}$ and $p_{1,2}$ are not on the same component, in which
  case at least one vertex containing one of these two contains some additional ramification
  (necessarily simple).
  This then exhausts the branch points not arising from the genus $1$ vertex, so we have the following
  picture:

    \scalebox{\BRANCHEDFACTOR}{
    \begin{tikzpicture}[thick,amat/.style={matrix of nodes,nodes in empty cells,
  row sep=2.5em,rounded corners,
  nodes={draw,solid,circle,minimum size=1.0cm}},
  dmat/.style={matrix of nodes,nodes in empty cells,row sep=2.5em,nodes={minimum size=1.0cm},draw=myred},
  fsnode/.style={fill=myblue},
  ssnode/.style={fill=mygreen}]

  \matrix[amat,nodes=fsnode] (mat1) {$1$\\
    $0$\\};

  \matrix[dmat,left=2cm of mat1] (degrees1) {$d-2e$\\
    $2e$\\};

 \matrix[amat,right=7cm of mat1,nodes=ssnode] (mat2) {$0$\\
   $0$ \\
   $0$ \\
   $0$\\
 $0$\\};
 \matrix[dmat,right=1.5cm of mat2] (degrees2) {$\mu_{\ell(\mu)}$\\
   \vdots\\
   $\mu_3$ \\
   $a$\\
 $b$\\};

 \draw  (mat1-1-1) edge["$\mu_{\ell(\mu)}$"] (mat2-1-1)
 (mat1-1-1) edge["$\mu_{3}$"] (mat2-3-1)
 (mat1-1-1) edge["$a+b-2e$"] (mat2-4-1)
 (mat1-2-1) edge["$2e-b$"] (mat2-4-1)
 (mat1-2-1) edge["$b$"] (mat2-5-1);

 \draw  (mat2-1-1) -- +(30:.7) node[anchor=west] {$p_{1,\ell(\mu)}$}
 (mat2-3-1) -- +(30:.7) node[anchor=west] {$p_{1,3}$}
 (mat2-4-1) -- +(30:.7) node[anchor=west] {$p_{1,1}$}
 (mat2-4-1) -- +(310:.7) node[anchor=west] {$(2,1^{a-2})$}
 (mat2-5-1) -- +(30:.7) node[anchor=west] {$p_{1,2}$};

 \draw  (mat1-2-1) -- +(30:.7) node[anchor=west] {$(2^e)$}
 (mat1-2-1) -- +(80:.7) node[anchor=south] {$(2^e)$};

    \end{tikzpicture}}

    By Lemma~\ref{lem:twoscycles}, $\overline H((2^e),(2^e),(2e-b,b))$ vanishes
    unless $b=e$ (which requires $a+b-2e=a-b>0\iff a>b$).
    Let $n_{a,b,d'}$ be the number of choices for the fibers in which to place
    the even ramification on the bottom left vertex. If $d-2b=a-b+d'$ (where
    $d'=|\mu'|$) is equal to $2$, then either $a-b=2$, $d'=0$, and $n_{a,b,d'}=\binom 32=3$
    since there are three ramified points on the genus $1$ vertex,
    or $a-b=d'=1$ and $n_{a,b,d'}=\binom 42=6$ since there are four ramified points on
    the genus $1$ vertex. If $d-2b>2$, then there are only two evenly ramified
    fibers on the genus $1$ vertex, so the placement of even ramification
    on the bottom left vertex is determined and $n_{a,b,d'}=1$.
    
    In any case the corresponding
    contribution to $W_{(2^k),(2^k),(2,1^{d-2})^{\ell(\mu)}}(a,b,\mu')$ is

    \begin{align*}
      &n_{a,b,d'}\ell(\mu)\binom{d-2}{a-2,b,\mu_3,\dots,\mu_{\ell(\mu)}}\binom{k}{b}^2\binom{d-2}{2b}^{\ell(\mu)-1}W_{(2^{k-b}),(2^{k-b}),(2,1^{d-2b-2})^{\ell(\mu)-1}}(a-b,\mu') \cdot \\
      &\overline H((2^b),(2^b),(b,b),(1^{2b})^{\ell(\mu)-1}) \cdot\prod_{i=3}^{\ell(\mu)}
      \overline H((\mu_i),(\mu_i),(1^{\mu_i}))\mu_i\cdot \overline H((a-b,b),(a),(2,1^{a-2}))\overline H((b),(b),(1^b))\cdot b^2 \\
      &=n_{a,b,d'}\ell(\mu)\frac{(d-2)!(k!)^2((d-2)!)^{\ell(\mu)-1}}{(a-2)!b!\mu_3!\cdots\mu_{\ell(\mu)}!(b!)^2((k-b)!)^2((2b)!(d-2b-2)!)^{\ell(\mu)-1}}N_{(2^{k-b}),(2^{k-b})}(a-b,\mu')\cdot \\
      &2((k-b)!)^2\cdot ((d-2b-2)!)^{\ell(\mu)-1}(\ell(\mu)-1)! \cdot\frac 1{2b}(b!)^2\cdot 2\cdot ((2b)!)^{\ell(\mu)-1}\cdot \mu_3!\cdots\mu_{\ell(\mu)}!(a-2)!b!\cdot b \\
      &=n_{a,b,d'}(\ell (\mu))!N_{(2^{k-b}),(2^{k-b})}(a-b,\mu')\cdot (k!)^2\cdot 2\cdot ((d-2)!)^{\ell(\mu)}
    \end{align*}
    so the contribution to $N_{(2^k),(2^k),(2,1^{d-2})^{\ell(\mu)}}(a,b,\mu')$ is $n_{a,b,d'}N_{(2^{k-b}),(2^{k-b})}(a-b,\mu')$.

    We can now turn to the situation where $p_{1,1},\dots,p_{1,\ell(\mu)}$ appear
    on the left side of $\Gamma$, i.e.\ where their images are on the same
    component as the image of the genus $1$ vertex. Since $p_{1,1}$ is not on the
    genus $1$ vertex, its path to the genus $1$ vertex must pass through
    a vertex on the right side, yielding at least two branch points.
    If any of $p_{1,3},\dots,p_{1,\ell(\mu)}$ were not on the genus $1$ vertex then
    this would be true of another vertex, implying that both fibers of
    ramification profile $(2^k)$ are on the right side and there is only
    simple ramification on the genus $1$ vertex (contradicting
    dimensional assumptions).

    If $p_{1,1}$ and $p_{1,2}$ are on different components, we have the following
    picture:

      \scalebox{\BRANCHEDFACTOR}{
        \begin{tikzpicture}[thick,amat/.style={matrix of nodes,nodes in empty cells,
  row sep=2.5em,rounded corners,
  nodes={draw,solid,circle,minimum size=1.0cm}},
  dmat/.style={matrix of nodes,nodes in empty cells,row sep=2.5em,nodes={minimum size=1.0cm},draw=myred},
  fsnode/.style={fill=myblue},
  ssnode/.style={fill=mygreen}]

  \matrix[amat,nodes=fsnode] (mat1) {$1$\\
    $0$\\
  $0$\\};

  \matrix[dmat,left=2cm of mat1] (degrees1) {$d-a-b$\\
    $a$\\
  $b$\\};

 \matrix[amat,right=7cm of mat1,nodes=ssnode] (mat2) {$0$\\
   $0$ \\
   $0$ \\
   $0$\\};
 \matrix[dmat,right=1.5cm of mat2] (degrees2) {$1$\\
   \vdots\\
   $1$ \\
   $e$\\};

 \draw  (mat1-1-1) edge["$1$"] (mat2-1-1)
 (mat1-1-1) edge["$1$"] (mat2-3-1)
 (mat1-1-1) edge["$e-a-b$"] (mat2-4-1)
 (mat1-2-1) edge["$a$"] (mat2-4-1)
 (mat1-3-1) edge["$b$"] (mat2-4-1);

 \draw  
 (mat2-4-1) -- +(30:.7) node[anchor=west] {$(2,1^{e-2})$}
 (mat2-4-1) -- +(310:.7) node[anchor=west] {$(2,1^{e-2})$};

 \draw  (mat1-2-1) -- +(30:.7) node[anchor=west] {$p_{1,1}$}
 (mat1-3-1) -- +(30:.7) node[anchor=west] {$p_{1,2}$};

    \end{tikzpicture}}

        This is impossible by Riemann-Hurwitz for the bottom-right vertex, and so
        we are left with one remaining picture to consider where $p_{1,1}$ and
        $p_{1,2}$ appear on the same component (here $2e=a+b$):

          \scalebox{\BRANCHEDFACTOR}{
                \begin{tikzpicture}[thick,amat/.style={matrix of nodes,nodes in empty cells,
  row sep=1.5em,rounded corners,
  nodes={draw,solid,circle,minimum size=1.0cm}},
  dmat/.style={matrix of nodes,nodes in empty cells,row sep=1.5em,nodes={minimum size=1.0cm},draw=myred},
  fsnode/.style={fill=myblue},
  ssnode/.style={fill=mygreen}]

  \matrix[amat,nodes=fsnode] (mat1) {$1$\\
    $0$\\};

  \matrix[dmat,left=2cm of mat1] (degrees1) {$d-2e$\\
    $2e$\\};

 \matrix[amat,right=7cm of mat1,nodes=ssnode] (mat2) {$0$\\
   $0$ \\
   $0$ \\
   $0$\\
   $0$\\
   $0$\\
 $0$\\};
 \matrix[dmat,right=1.5cm of mat2] (degrees2) {$1$\\
   \vdots\\
   $1$ \\
   $2$\\
   $1$\\
   \vdots\\
 $1$\\};

 \draw  (mat1-1-1) edge["$1$"] (mat2-1-1)
 (mat1-1-1) edge["$1$"] (mat2-3-1)
 (mat1-1-1) edge["$1$"] (mat2-4-1)
 (mat1-2-1) edge["$1$"] (mat2-4-1)
 (mat1-2-1) edge["$1$"] (mat2-5-1)
 (mat1-2-1) edge["$1$"] (mat2-7-1);

 \draw  
 (mat2-4-1) -- +(30:.7) node[anchor=west] {$2$}
 (mat2-4-1) -- +(310:.7) node[anchor=west] {$2$};

 \draw  (mat1-2-1) -- +(50:.7) node[anchor=west] {$p_{1,1}$}
 (mat1-2-1) -- +(80:.7) node[anchor=south] {$p_{1,2}$}
 (mat1-2-1) -- +(160:.7) node[anchor=east] {$(2^e)$}
 (mat1-2-1) -- +(220:.7) node[anchor=north] {$(2^e)$};

                \end{tikzpicture}}

                By Lemma~\ref{lem:twoscycles}, $\overline H((a,b),(2^e),(2^e))$ vanishes unless $a=b$.
                There are $\binom{\ell(\mu)}{2}$ choices for simple ramification profiles to place on the right
                side, $\binom{d-2}{2a-1}$ choices for which unramified points appear in the top right
                and bottom right corners of the diagram within the first chosen simple ramification profile,
                $(d-2)!$ choices for placement of unramified points in the second chosen
                right simple ramification profile, $\binom ka^2$ choices for placement of even ramification profiles
                on the left side, and $\binom{d-2}{2a}^{\ell(\mu)-2}$ choices for placement of unramified points in
                simple ramification profiles on the left side.
                As for choices of placement for even ramification:
                \begin{itemize}
                  \item
                If $\mu'=(2)$, then there are $3$ unmarked ramified points on
                the genus $1$ vertex, and two must be marked as being in
                even ramification fibers, yielding $\binom 32=3$ choices.
              \item If $\mu'=(1,1)$, then there are
                $4$ unmarked ramified points on the genus $1$ vertex,
                yielding $\binom 42=6$ choices.
              \item If $d'>2$, then there is a single choice, as the
                genus $1$ vertex unambiguously has two even ramification profiles.
                \end{itemize}

                We also have $c_{\Gamma}=2$, since there are two ways to contract
                edges of the graph to obtain a stabilization (one, but not
                both, of the edges from the degree $2$ vertex on the right
                will be contracted).
                There are also $d-2a-1$ unfixed points to be ordered
                on the genus $1$ vertex
                corresponding to all but one of its edges.
                Therefore the total
                contribution to $W_{(2^k),(2^k),(2,1^{d-2})^{\ell(\mu)}}(a,a,\mu')$ in this case is
                \begin{align*}
                  &\binom{\ell(\mu)}{2}\frac{((d-2)!)^2}{(2a-1)!(d-2a-1)!}\binom ka^2\binom{d-2}{2a}^{\ell(\mu)-2}W_{(2^{k-a}),(2^{k-a}),(2,1^{d-2a-2})^{\ell(\mu)-2}}(\mu')\cdot (d-2a-1)!\cdot  \\
                  &
                  \overline H((a,a),(2^a),(2^a),(1^{2a})^{\ell(\mu)-1}) \cdot
                  \overline H((2),(2),(1,1))\cdot 2 \\
                  &=\frac{(\ell(\mu))(\ell(\mu)-1)((d-2)!)^2}{2(2a-1)!}\frac{(k!)^2}{(a!(k-a)!)^2}\frac{((d-2)!)^{\ell(\mu)-2}}{((2a)!(d-2a-2)!)^{\ell(\mu)-2}}((k-a)!)^2\cdot 2 \cdot \\
                  &((d-2a-2)!)^{\ell(\mu)-2}\cdot (\ell(\mu)-2)!N_{(2^{k-a}),(2^{k-a})}(\mu')\cdot\frac{((2a)!)^{\ell(\mu)-1}((a!)^2)}{a}\cdot 2 \\
                  &=\frac{2(\ell(\mu))!((d-2)!)^{\ell(\mu)}(k!)^2N_{(2^{k-a}),(2^{k-a})}(\mu')
                  (2a)!}{(2a-1)!a} \\
                  &=2N_{(2^{k-a}),(2^{k-a})}(\mu')\cdot (k!)^2\cdot 2\cdot ((d-2)!)^{\ell(\mu)}\cdot (\ell(\mu))!
                \end{align*}
                multiplied by $3$ if $\mu'=(2)$ or by $6$ if $\mu'=(1,1)$.
                Therefore the corresponding contribution
                to $W_{(2^k),(2^k)}(a,a,\mu')$ is $2N_{(2^{k-a}),(2^{k-a})}(\mu')$
                if $d'>2$, $3\cdot 2\cdot 1=6$ if $\mu'=(2)$, and
                $6\cdot 2\cdot 1=12$ if $\mu'=(1,1)$.

\end{proof}

We can conclude with a proof of Theorem~\ref{thm:twos}:
\begin{proof}
  We can prove that $N_{(2^k),(2^k)}(\mu)=3\cdot 2^{\ell(\mu)-1}$ by induction on $\ell(\mu)$; the base case $\ell(\mu)=1$ was proven in Theorem~\ref{thm:combinatorial}. Assume
  the result for all $\nu$ with $\ell(\nu)< r$, and suppose $\ell(\mu)=r\geq 2$.
  Then we can write $\mu=(a,b,\mu')$ with $d'=|\mu'|$. If $a=b$, then we have
  four cases:
  \begin{enumerate}
  \item If $\mu'$ is empty: $N_{(2^k),(2^k)}(a,a)=N_{(2^k),(2^k)}(2a)+3=3+3=3\cdot 2^{r-1}$
  \item If $\mu'=(2)$: $N_{(2^k),(2^k)}(a,a,2)=N_{(2^k),(2^k)}(2a,2)+6=6+6=12=3\cdot 2^{r-1}$
  \item If $\mu'=(1,1)$: $N_{(2^k),(2^k)}(a,a,1,1)=N_{(2^k),(2^k)}(2a,1,1)+12=12+12=3\cdot 2^{r-1}$
  \item If $d'>2$:
      \begin{align*}
    N_{(2^k),(2^k)}(a,a,\mu') &= N_{(2^k),(2^k)}(2a,\mu')+2N_{(2^{k-a}),(2^{k-a})}(\mu') \\
    &=3\cdot 2^{r-2}+2(3\cdot 2^{r-3}) \\
    &=3\cdot 2^{r-1}
  \end{align*}

    \end{enumerate}
  Now if $a>b$ (without loss of generality), we have three cases:
  \begin{enumerate}
  \item If $a-b=2$ and $d'=0$:
    \[
    N_{(2^k),(2^k)}(a,a-2)=N_{(2^k),(2^k)}(2a-2)+3N_{(2),(2)}(2)=3+3\cdot 1=6=3\cdot 2^{r-1}
    \]
  \item If $a-b=1$ and $d'=1$:
    \[
    N_{(2^k),(2^k)}(a,a-1,1)=N_{(2^k),(2^k)}(2a-1,1)+6N_{(2),(2)}(1,1)=6+6\cdot 1=12=3\cdot 2^{r-1}
    \]
    \item If $d-2b>2$:
    \begin{align*}
      N_{(2^k),(2^k)}(a,b,\mu') &= N_{(2^k),(2^k)}(a+b,\mu')+2N_{(2^{k-b}),(2^{k-b})}(a-b,\mu') \\
      &= 3\cdot 2^{r-2}+3\cdot 2^{r-2} \\
      &= 3\cdot 2^{r-1}.
    \end{align*}
    \end{enumerate}
  \end{proof}

\section{Invariants $S_{\a,\b}$}
\label{section:Sab}

In this section, we consider a family of invariants $N_{g,\sigma,\nu}$ where
one ramification profile
$\mu$ is fully marked,
each other ramification profile $\sigma_i$ is of the form $(a,1,\dots,1)$, and
the only ``remembered'' points in these ramified fibers are some subset of the ones with
a nontrivial ramification index. We will establish general recursive formulas
(Theorems~\ref{thm:reduceb} and \ref{thm:reducea})
that allow us to compute this family of invariants, and will establish
specific closed-forms in special cases.

\subsection{Definitions and basic results}

Fix a degree $d\geq 1$. For any partition $\mu$ of $d$, and multisets $\a$ and $\b$ of
positive integers less than or equal to $d$, let
\[
S_{\a,\b}(\mu)=N_{1,\sigma,\nu}
\]
where
\[
\sigma=\{\mu\}\cup\{(x,1,\dots,1):x\in\a\cup\b\},\quad \nu=\{\mu\}\cup\{(x,1,\dots,1):x\in\b\}
\]
By Riemann-Hurwitz,
\[
d-\ell(\mu)+\sum_{x\in\a\cup\b}(x-1)=2d.
\]
We will use the convention that $x_1,\dots,x_{\ell(\mu)}$
form the fiber corresponding to $\mu$, $p_{i,1},\dots,p_{i,d-a_i+1}$ form
the fiber corresponding to the $i^{\text{th}}$ value $a_i\in\a$, and
$P_{i,1},\dots,P_{i,d-b_i+1}$ form the fiber corresponding to the
$i^{\text{th}}$ value $b_i\in\b$.

\begin{prop}
  Elements $1$ in $\b$ can be removed:
  $S_{\a,\b+\{1\}}(\mu)=S_{\a,\b}(\mu)$.
  We can therefore assume without loss of generality that $\b$ contains only integers greater than $1$.
\end{prop}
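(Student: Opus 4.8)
The plan is to recognise a $1\in\b$ as a completely free marked point on the source. Adjoining it to $\b$ replaces $\sigma$ by $\sigma'=\sigma\cup\{(1^{d})\}$ and $\nu$ by $\nu'=\nu\cup\{(1)\}$, i.e.\ we keep track of exactly one point of the new, unramified fibre; remembering more points of it would make the source of $\pi_{\nu'}$ of smaller dimension than its target, contradicting Lemma~\ref{lem:dim}. Because a part equal to $1$ contributes $0$ to $\sum_{x\in\a\cup\b}(x-1)$, the Riemann--Hurwitz balance is unchanged, so the same number of simple branch points is adjoined, and $n$ and $m$ each grow by $1$, so $3g+m=n$ persists. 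Hence $S_{\a,\b+\{1\}}(\mu)$ is defined exactly when $S_{\a,\b}(\mu)$ is, and the content of the proposition is that the two are equal.

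I would make this precise with the forgetful maps $F\colon\Hb_{1,\sigma'}\to\Hb_{1,\sigma}$, erasing the new branch point $q_{n+1}$ together with all of its $d$ preimages, and $G\colon\mb_{1,m+1}\to\mb_{1,m}$, erasing the $(m+1)$-st source point; with $\pi_{\nu'}$ and $\pi_{\nu}$ these fit into a commuting square. The key observation is that this square is cartesian over the dense open locus on which the degrees are computed: a point of $\Hb_{1,\sigma'}$ lying over a generic $(C,p_1,\dots,p_{m+1})$ is the same datum as a cover $f$ of $\Hb_{1,\sigma}$ lying over $(C,p_1,\dots,p_m)$ --- for generic $p_{m+1}$ this point is automatically unramified for $f$ and misses $f^{-1}(q_1),\dots,f^{-1}(q_n)$, so $q_{n+1}=f(p_{m+1})$ is forced --- together with a labelling of the remaining $d-1$ points of $f^{-1}(q_{n+1})$. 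Thus the generic fibre of $\pi_{\nu'}$ has exactly $(d-1)!$ times as many points as that of $\pi_{\nu}$, i.e.\ $W_{1,\sigma',\nu'}=(d-1)!\cdot W_{1,\sigma,\nu}$. It then remains to see that the normalisation absorbs this factor: $\sigma'_{n+1}-\nu'_{n+1}=(1^{d-1})$ contributes precisely the factor $(d-1)!$ to $|\Aut(\sigma'-\nu')|$, while the new fibre --- being remembered in $\nu'$, hence pinned to a fixed position in $\mb_{1,m+1}$ --- introduces no new symmetry among the fibres; so $|\Aut(\sigma'-\nu')|=(d-1)!\cdot|\Aut(\sigma-\nu)|$ and $S_{\a,\b+\{1\}}(\mu)=N_{1,\sigma',\nu'}=N_{1,\sigma,\nu}=S_{\a,\b}(\mu)$.

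The main obstacle is purely bookkeeping at the two ends of this chain. On the geometric side one must choose the generic pointed curve carefully enough that $p_{m+1}$ is simultaneously unramified and lands at a nonspecial point of the target for every one of the finitely many covers in the count, and that the square is cartesian only after deleting the codimension $\geq 1$ loci where $q_{n+1}$ collides with an existing branch point; neither affects the generic degree. On the combinatorial side one must verify that the labelling conventions built into $W_{1,\sigma,\nu}$ and the symmetry conventions built into $|\Aut(\sigma-\nu)|$ really do cancel the factor $(d-1)!$ exactly rather than approximately --- this is the only step that genuinely uses the precise definitions from the preceding subsection, the geometric input being otherwise transparent.
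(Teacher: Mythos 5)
Your argument is correct and follows essentially the same route as the paper: both set up the commuting square of forgetful maps, observe that a cover with the extra marked unramified fiber is the data of the old cover plus a labelling of the remaining $d-1$ points of that fiber, conclude $W_{1,\sigma',\nu'}=(d-1)!\,W_{1,\sigma,\nu}$, and cancel this against the factor $(d-1)!$ that the new profile $(1^{d-1})$ contributes to $|\Aut(\sigma'-\nu')|$. The only wording quibble is that the square is not actually cartesian even generically --- the paper says precisely that the induced map to the fiber product is $(d-1)!$-to-$1$ --- but your subsequent description states exactly this, so the content agrees.
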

\begin{proof}
  The addition of a ramification profile $(1,\dots,1)$ does not affect
  our counting problem, since this is the generic form of a fiber;
  imposing that one marked point on the
  generic genus $1$
  marked curve is part of a generic fiber is no restriction at all.

  More formally,
  we have the following commutative diagram:

  \[\begin{tikzcd}
\Hb_{1,\sigma+\{(1,\dots,1)\}} & \overline{\mathcal M}_{1,\ell(\b)+1} \\
\Hb_{1,\sigma} & \overline{\mathcal M}_{1,\ell(\b)}
	\arrow[from=1-1, to=1-2, "\pi'"]
	\arrow[from=1-1, to=2-1]
	\arrow[from=1-2, to=2-2]
	\arrow[from=2-1, to=2-2, "\pi"]
  \end{tikzcd}\]
  This is not quite a Cartesian square, but the induced map
  \[\Hb_{1,\sigma+\{(1,\dots,1)\}}\to \Hb_{1,\sigma}\times_{\overline{\mathcal M}_{1,\ell(\b)}}\overline{\mathcal M}_{1,\ell(\b)+1}
  \]
  is $(d-1)!$-to-$1$ since a cover $f:E\to\P^1$ with an additional marked unramified fiber
  $y_1,\dots,y_d$
  is equivalent to the cover with the marked point $y_1$ and an orientation
  of the points it determines in its fiber $f^{-1}(f(y_1))$, $\{y_2,\dots,y_d\}$.
  Therefore
  \[
  W'=\text{deg}(\pi')=(d-1)!\cdot\text{deg}(\pi)=(d-1)!\cdot W
  \]
  where $W$ and $W'$ are the labeled versions of $S_{\a,\b}(\mu)$ and
  $S_{\a,\b+\{1\}}(\mu)$ respectively, and incorporating the normalizing factors
  yields the result.
  \end{proof}

The special case $\b=\emptyset$ is of most interest to us, as it can be rewritten
\[
S_{\a,\emptyset}(\mu)=N_{(a_1,1,\dots,1),\dots,(a_{\ell(\a)},1,\dots,1)}(\mu)
\]
However even if we wish to focus solely on this special case, our
recursive formulas require us to consider the more general notion (see
Theorem~\ref{thm:reducea}).

\begin{lem}
  \label{lem:Sabdim}
  If $S_{\mathfrak a,\mathfrak b}(\mu)$ is well-defined, $|\mathfrak a|=\ell(\mu)+2$.
\end{lem}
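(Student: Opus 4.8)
The plan is to derive the statement directly from the dimension criterion of Lemma~\ref{lem:dim}. Recall that $S_{\a,\b}(\mu)=N_{1,\sigma,\nu}$, where $\sigma$ consists of the profile $\mu$ together with one profile $(x,1,\dots,1)$ for each $x\in\a\cup\b$, and $\nu$ records the whole fiber of $\mu$ together with the single ramified point of each fiber indexed by $\b$. By Lemma~\ref{lem:dim} (applied with $g=1$), if this invariant is well-defined then $3+m=n$, where $n$ is the number of branch profiles appearing in $\sigma$ and $m=\sum_i\ell(\nu_i)$; so it suffices to compute $n$ and $m$ and solve this equation for $|\a|$.

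The remaining work is two short counts. First, $n$: there is one profile for $\mu$ and one for each element of $\a\cup\b$, so $n=1+|\a|+|\b|$. Second, $m$: the fiber of $\mu$ contributes its $\ell(\mu)$ remembered points; each of the $|\b|$ fibers indexed by $\b$ contributes exactly one remembered point (the point of ramification index $x$, the remaining $d-x$ points of that fiber being forgotten); and the fibers indexed by $\a$ contribute none. Hence $m=\ell(\mu)+|\b|$. Plugging both into $3+m=n$ gives $3+\ell(\mu)+|\b|=1+|\a|+|\b|$, that is $|\a|=\ell(\mu)+2$, as claimed.

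I expect no real obstacle here; the only place to be careful is the computation of $m$, where one must use that $\pi_{\nu}$ remembers only the ramified point of each fiber indexed by $\b$, so such a fiber contributes $1$ rather than $d-x+1$ to $m$. It is also worth noting in the write-up that the Riemann--Hurwitz relation $d-\ell(\mu)+\sum_{x\in\a\cup\b}(x-1)=2d$ is part of the standing setup for $S_{\a,\b}(\mu)$, which is what makes $\epsilon_0$ finite and hence lets Lemma~\ref{lem:dim} apply; the relation itself is not otherwise used in the argument.
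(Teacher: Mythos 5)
Your proof is correct and follows essentially the same route as the paper: apply Lemma~\ref{lem:dim} with $g=1$, count $n=1+|\mathfrak a|+|\mathfrak b|$ branch points and $m=\ell(\mu)+|\mathfrak b|$ remembered points, and solve $3+m=n$. Your side remark correctly resolves the one subtlety (each $\mathfrak b$-fiber contributes a single remembered point, as in the paper's example $\nu=((4),(),(),(2))$), so nothing is missing.
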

\begin{proof}
  By Lemma~\ref{lem:dim}, $3+m=n$ where $m$ is the number of points marked under $\nu$ and
  $n$ is the number of branch points. In this case $n=|\mathfrak a|+|\mathfrak b|+1$
  and $m=\ell(\mu)+|\mathfrak b|$, so
  \[
  3+\ell(\mu)+|\mathfrak b|=|\mathfrak a|+|\mathfrak b|+1\implies |\mathfrak a|=\ell(\mu)+2.
  \]
\end{proof}

\begin{eg}
  Let $d=4$, let $\mu=(4)$, let $\a=\{3,2,2\}$ and let $\b=\{2\}$. Then
  \[
  S_{\a,\b}(\mu)=N_{1,((4),(3,1),(2,1,1),(2,1,1)),((4),(),(),(2))}
  \]
  This counts genus $1$ covers of $\P^1$ as illustrated in Figure~\ref{fig:Sab} (with pink, bolded points fixed).

  \begin{figure}[h]
    \caption{Example of a cover counted by $S_{\{3,2,2\},\{2\}}(4)$}
  \centering
\begin{tikzpicture}
  \draw[domain=-7:7,samples=50,color=gray!50,thick] plot (\x, \x^3/64 - \x/2);
  \foreach \x/\p/\c/\d in {-6/$\mathbf{x_1}$/mypink/4}
  {
    \filldraw[\c] (\x,\x^3/64 - \x/2) circle (0.6mm) node[above] {\p};
    \draw[->] (\x,\x^3/64 - \x/2 - .4) to["\d"] (-6, -3.5);
  }

    \foreach \x/\p/\c/\d in {-4.8/$p_{1,1}$/black/3, -3.5/$p_{1,2}$/black/1}
  {
    \filldraw[\c] (\x,\x^3/64 - \x/2) circle (0.6mm) node[above] {\p};
    \draw[->] (\x,\x^3/64 - \x/2 - .4) to["\d"] (-2.8+.3*\x, -3.5);
  }
  
  \foreach \x/\p/\c/\d in {-1.8/$p_{2,1}$/black/2, -.9/$p_{2,2}$/black/1, .2/$p_{2,3}$/black/1}
  {
    \filldraw[\c] (\x,\x^3/64 - \x/2) circle (0.6mm) node[above] {\p};
    \draw[->] (\x,\x^3/64 - \x/2 - .4) to["\d"] (-1+.3*\x, -3.5);
  }

    \foreach \x/\p/\c/\d in {1/$p_{3,1}$/black/2, 2/$p_{3,2}$/black/1, 3/$p_{3,3}$/black/1}
  {
    \filldraw[\c] (\x,\x^3/64 - \x/2) circle (0.6mm) node[above] {\p};
    \draw[->] (\x,\x^3/64 - \x/2 - .4) to["\d"] (1+.3*\x, -3.5);
  }
  
  \foreach \x/\p/\c/\d in {4/$\mathbf{P_{1,1}}$/mypink/2, 4.8/$P_{1,2}$/black/1, 5.7/$P_{1,3}$/black/1}
  {
    \filldraw[\c] (\x,\x^3/64 - \x/2) circle (0.6mm) node[above] {\p};
    \draw[->] (\x,\x^3/64 - \x/2 - .4) to["\d"] (3+.3*\x, -3.5);
    }

    \draw[domain=-7:7,samples=50,color=gray!50,thick] plot (\x, -4);
  \foreach \x/\q in {-6/$q_1$, -4/$q_2$, -1.4/$q_3$, 1.4/$q_4$, 4.3/$q_5$}
    \filldraw[black] (\x,-4) circle (0.6mm) node[below] {\q};
\end{tikzpicture}

\label{fig:Sab}
  \end{figure}
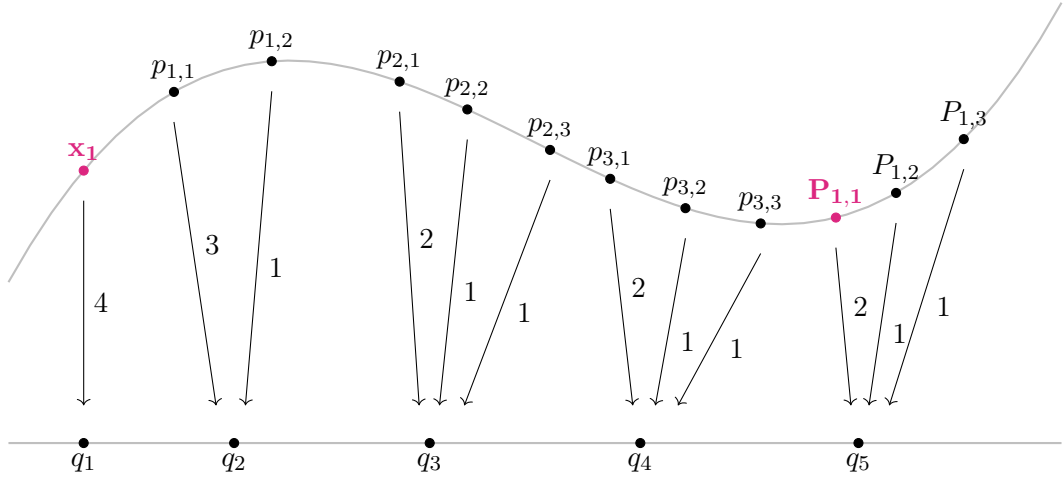
\end{eg}

\begin{eg}
  \label{eg:Sabns}
  We can specialize to the case where $\a$ is of the simple form
  $\a=\{n,\dots,n\}\cup\{2,\dots,2\}$, where there are $k$ appearances
  of $n$ and $\ell$ appearances of $2$, and $\b=\{t\}$ (the $\b=\emptyset$
  case, which is of particular interest to us, corresponding to $t=1$).
  By Riemann-Hurwitz and Lemma~\ref{lem:Sabdim}, for
  $S_{\{n\}^k\cup\{2\}^{\ell},\{t\}}(\mu)$ to be well-defined,
  \begin{align*}
    d-(\ell(\a)-2)+\sum_{x\in\a}(x-1)+(t-1)=2d &\iff d-(k+\ell-2)+k(n-1)+\ell+t-1=2d \\
    &\iff k(n-2)+1+t=d
  \end{align*}
  and $k+\ell=\ell(\mu)+2$.
  For example in the case $n=3$ and $t=1$, we have $d=k+2$; for any fixed $d$,
  \[S_{\{3\}^{d-2}\cup\{2\}^{\ell(\mu)+4-d},\emptyset}(\mu)\] counts genus $1$ covers
  with a fixed ramification profile $\mu$ and as many other
  points of ramification index $3$ as possible. It is well-defined
  as long as $d-\ell(\mu)\leq 4$.
  In Example~\ref{eg:fourtorsion}, we saw that
  \[
  S_{\{4\}\cup\{2,2\},\emptyset}(4)=N_{(4)}(4)=15.
  \]
  For $n>3$, there are additional
  restrictions on the degree $d$.
  \end{eg}

We will compute the invariants $S_{\a,\b}(\mu)$ as follows. First,
we use our admissible covers technique
and Theorem~\ref{thm:admissible} to establish a recursion (Theorem~\ref{thm:reduceb}) that allows us to remove elements of
$\b$, and we then apply the same
technique to establish another recursion (Theorem~\ref{thm:reducea}) that allows us to remove elements of $\a$; the combination
of these recursions allows us to reduce to the case where $\mu=(d)$ and $\b$ is empty. Finally,
we establish the base case (Theorem~\ref{thm:Sabbase}) using our previously demonstrated genus reduction technique.

While these results in principle provide an algorithm for computing any invariant
$S_{\a,\b}(\mu)$, the recursions involved are too messy to expect a general closed
form solution for these invariants,
even for $\mu=(1^d)$. However, we demonstrate
how they can be used to find a closed form for a special family
of invariants in Theorem~\ref{thm:deg4}.

\subsection{Reduction of $\b$}

\begin{thm}
  \label{thm:reduceb}
  For $n>0$, if $y>0$, then
  \begin{align*}
    S_{\mathfrak a,\mathfrak b+\{y\}}(\mu, n) &=S_{\mathfrak a,\mathfrak b}(\mu, n-y+1) +\sum_{z\in\mathfrak a}\min(z-1,y-1,n,y+z-n-2)S_{\mathfrak a-\{z\},\mathfrak b+\{y+z-n-2\}}(\mu)
  \end{align*}
  This sum is over $\a$ as a set rather than a multiset, i.e.\ there is
  a single term for each unique value in $\a$.
\end{thm}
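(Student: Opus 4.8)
The plan is to run the degeneration argument of Theorem~\ref{thm:admissible} on the labeled invariant $W$ underlying $S_{\a,\b+\{y\}}(\mu,n)$, taking the boundary divisor $A$ of $\mb_{1,m}$ (here $m=\ell(\mu)+|\b|+2$) to be the two-vertex graph: a genus $1$ vertex joined by a single edge to a genus $0$ vertex, where the genus $0$ vertex carries precisely the index-$n$ point $x_{\ell(\mu)+1}$ of the $(\mu,n)$-fiber together with the (index-$y$) marked point $P$ of the $y$-fiber, and the genus $1$ vertex carries all the other marked points (the points $x_1,\dots,x_{\ell(\mu)}$ and the marked points of the fibers of $\b$). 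Since $\Aut(A)$ is trivial, the left side of Theorem~\ref{thm:admissible} is exactly this $W$. By Lemma~\ref{lem:branchpoints}, in any contributing admissible cover of graphs $\Gamma\to\Gamma'$ the graph $\Gamma'$ has one edge and the fiber of the genus $1$ vertex of $\Gamma$ omits exactly two branch points; since $x_{\ell(\mu)+1}$ lies over a target component $X_0$, its whole fiber over $q_1$ lies over $X_0$, and the $y$-fiber over $q^{(y)}$ likewise, so $q_1$ and $q^{(y)}$ are precisely the two omitted branch points and the genus $1$ vertex (together with the degree-$\mu_i$ bubbles that carry $x_1,\dots,x_{\ell(\mu)}$ and contract onto it in the $\hat A$-structure) lives over the complementary component $X_1$.

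Next I would enumerate the contributing $\Gamma$; one checks, by Riemann--Hurwitz on the relevant genus $0$ vertices exactly as in the recursion argument of Section~\ref{section:twos}, that the surviving configurations split into two families according to whether some branch point $q^{(z)}$ with $z\in\a$ also lies over $X_0$. If none does, then over $X_0$ there is a single genus $0$ bubble $B$ of degree $n$ carrying $x_{\ell(\mu)+1}$ (fully ramified over $q_1$) and $P$ (ramified of index $y$ over $q^{(y)}$) and attached to the genus $1$ side; the Hurwitz number attached to $B$ has the form $\overline H((n),(y,1^{n-y}),(n-y+1,1^{y-1}))$, and after contracting $B$ and the $\mu_i$-bubbles the genus $1$ side retains a fixed fiber of profile $(\mu,n-y+1)$, so this family contributes $S_{\a,\b}(\mu,n-y+1)$ once the $c_\Gamma$ weights, multinomial placement counts for the unmarked points, and automorphism factors are cancelled against the normalization (and it is empty, i.e.\ contributes $0$, whenever $(\mu,n-y+1)$ fails to be a partition). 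If instead $q^{(z)}$ lies over $X_0$ for exactly one value $z\in\a$, the bubble $B$ over $X_0$ also carries the index-$z$ point of the $z$-fiber, and contracting it leaves on the genus $1$ side a fixed fiber of profile $\mu$ together with one new marked point of index $w:=y+z-n-2$ arising at the node — so the vertex term is $S_{\a-\{z\},\b+\{w\}}(\mu)$, the $z$- and $y$-fibers having merged; summing over the distinct values of $\a$ (the multiplicity of $z$ in $\a$ cancels against the passage from $|\Aut(\a)|$ to $|\Aut(\a-\{z\})|$, which is why the sum is over $\a$ as a set) produces the stated sum. Throughout one uses the $(2,1^{d-2})$-padding convention to satisfy Riemann--Hurwitz and the evaluations of the small genus $0$ Hurwitz numbers (cf.\ Appendix~\ref{appendix:hurwitz}, Lemma~\ref{lem:Hurwitz2}).

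The step I expect to be the main obstacle is extracting the coefficient $\min(z-1,y-1,n,y+z-n-2)$ in the second family and checking that all remaining weights cancel. The point is that in the bubble $B$ over $X_0$ the node to the genus $1$ side carries a ramification index $e$ which the data permits to range over $1\le e\le\min(z-1,y-1,n,w)$ — one bound as the part of the $z$-ramification left over past the distinguished index-$z$ point, one likewise from the $y$-fiber, one because the companion index-$n$ point sits on the genus $1$ side, and one because $e$ cannot exceed the new merged index $w=y+z-n-2$ — and for every admissible value of $e$ the resulting vertex term is the same $S_{\a-\{z\},\b+\{w\}}(\mu)$, so the number of admissible covers of graphs is precisely this minimum. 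Verifying these four bounds, confirming that the intervening Hurwitz numbers, contracted-edge labels $c_\Gamma$, and $|\Aut(\Gamma)|$ factors cancel exactly, and ruling out the remaining sub-configurations (for instance with the index-$n$ and index-$y$ points on distinct bubbles, or with $q_1,q^{(y)}$ over $X_1$ and two $\a$-branch points escaping instead, which die on Riemann--Hurwitz grounds for the offending genus $0$ vertex) is where the real bookkeeping lies; dividing both sides by $|\Aut(\sigma-\nu)|$ then converts the resulting identity on $W$ into the claimed recursion for $S$.
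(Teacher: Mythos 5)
Your setup (the two-vertex graph $A$ with $x_{\ell(\mu)+1}$ and the $y$-marked point on the genus $0$ vertex, Theorem~\ref{thm:admissible} plus Lemma~\ref{lem:branchpoints}) is the paper's, and your first family reproduces the paper's first case correctly. But the second family is misconfigured, and the error traces back to your opening claim that the $\hat A$-structure forces the component of $\Gamma$ carrying $x_{\ell(\mu)+1}$ to lie over a target component $X_0$ different from the one under the genus $1$ vertex, so that $q_1$ and $q^{(y)}$ are always the two omitted branch points. The $\hat A$-structure constrains only how $\Gamma$ contracts to the source degeneration; it says nothing about which target component the $x_{\ell(\mu)+1}$-component covers. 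Once you then let a branch point $q^{(z)}$, $z\in\a$, ``also'' lie over $X_0$, you have three branch points whose fibers miss the genus $1$ vertex, contradicting Lemma~\ref{lem:branchpoints} (such loci fail to dominate $\mb_A$ and contribute $0$). The configuration is also internally inconsistent: you keep ``a fixed fiber of profile $\mu$'' on the genus $1$ side while putting the index-$n$ point of the \emph{same} fiber over $q_1$ on the bubble $B$ over $X_0$; a fiber over a point of $X_0$ cannot have points lying over $X_1$. In the actual contributing configuration for the second term, $q_1$ lies on the genus-$1$ side: $x_1,\dots,x_{\ell(\mu)}$ sit on the genus $1$ vertex, $x_{\ell(\mu)+1}$ sits on a separate degree-$n$ genus $0$ bubble over $X_1$, and that bubble is glued by an edge of index $n$ to the bubble over $X_0$ carrying $P_{1,1}$ and the index-$z$ point; the two omitted branch points are $q^{(y)}$ and $q^{(z)}$, not $q_1$ and $q^{(y)}$.

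Your mechanism for the coefficient is also not what happens. The node index $e$ joining the genus $1$ vertex to the bubble over $X_0$ is not free: Riemann--Hurwitz on that genus $0$ vertex forces $e=y+z-n-2$ exactly, so there is no range ``$1\le e\le\min(\dots)$'' to count. The factor $\min(z-1,y-1,n,y+z-n-2)$ is instead the genus $0$ Hurwitz number $H((y+z-n-2,n),(z,1,\dots),(y,1,\dots))$ of that bubble, evaluated by Lemma~\ref{lem:Hurwitz2} (with an extra factor of $2$ when $y+z-n-2=n$, since the two edges into the bubble are then distinguished), i.e.\ it is a monodromy count for a fixed graph, not a count of graphs. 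Finally, the sub-case eliminations you defer (all $x_i$ on the genus $1$ vertex; $P_{1,1}$ and $p_{i,1}$ on a common component over $X_0$) are exactly the Riemann--Hurwitz/stability arguments the paper carries out, and they are needed to see that the configuration above is the only survivor; as written, your enumeration would have to be redone from the corrected dichotomy before the cancellation bookkeeping can even begin.
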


\begin{proof}
  Let $k=\ell(\mu)$. We can apply Theorem~\ref{thm:admissible}
  to calculate $W_{1,\sigma,\nu}$ and divide by the normalizing factor
  \[ 
  \prod_{x\in\a+\b+\{y\}}(d-x)!\cdot |\Aut(\a)|\cdot |\Aut(\b+\{y\})|,
  \]
  but for brevity we will instead perform calculations on level of
  $N_{1,\sigma,\nu}$ to avoid writing combinatorial terms that cancel.
  Let $A$ be the following graph,
  with a genus $1$ component containing $x_1,\dots,x_k$ and all but one
  of the other fixed points:

    \scalebox{\BRANCHEDFACTOR}{
  \begin{tikzpicture}[thick,amat/.style={matrix of nodes,nodes in empty cells,
  row sep=2.5em,rounded corners,
  nodes={draw,solid,circle,minimum size=1.0cm}},
  dmat/.style={matrix of nodes,nodes in empty cells,row sep=2.5em,nodes={minimum size=1.0cm},draw=myred},
  fsnode/.style={fill=myblue},
  ssnode/.style={fill=mygreen}]

  \matrix[amat,nodes=fsnode] (mat1) {$1$\\};

 \matrix[amat,right=4cm of mat1,nodes=ssnode] (mat2) {$0$\\};

 \draw  (mat1-1-1) edge (mat2-1-1);

 \draw (mat1-1-1) -- +(35:.7) node[anchor=west] {$P_{\ell(\b),1}$}
 (mat1-1-1) -- +(60:.7) node[anchor=south] {$\dots$}
 (mat1-1-1) -- +(100:.7) node[anchor=south] {$P_{2,1}$}
 (mat1-1-1) -- +(285:.7) node[anchor=north] {$(x_1,\dots,x_k)$};

 \draw  (mat2-1-1) -- +(60:.7) node[anchor=west] {$x_{k+1}$}
 (mat2-1-1) -- +(330:.7) node[anchor=west] {$P_{1,1}$};

                \end{tikzpicture}}

                  Note that $|\Aut(A)|=1$.
We now aim to count admissible covers of graphs $\Gamma\to\Gamma'$ where $\Gamma$ has a $\hat A$-structure and $\hat A$ specializes to $A$.
Recall by Lemma~\ref{lem:branchpoints} that there are two branch points not
  arising from the genus $1$ component.
  
  A cover of graphs with $x_1,\dots,x_k$ on the right side looks like:

    \scalebox{\BRANCHEDFACTOR}{
\begin{tikzpicture}[thick,amat/.style={matrix of nodes,nodes in empty cells,
  row sep=2.5em,rounded corners,
  nodes={draw,solid,circle,minimum size=1.0cm}},
  dmat/.style={matrix of nodes,nodes in empty cells,row sep=2.5em,nodes={minimum size=1.0cm},draw=myred},
  fsnode/.style={fill=myblue},
  ssnode/.style={fill=mygreen}]

  \matrix[amat,nodes=fsnode] (mat1) {$1$\\
    $0$\\
    \vdots \\
  $0$\\};

  \matrix[dmat,left=2cm of mat1] (degrees1) {$d-\sum c_i$\\
    $c_1$\\
    \vdots \\
  $c_r$\\};

 \matrix[amat,right=7cm of mat1,nodes=ssnode] (mat2) {$0$\\
   \vdots\\
   $0$ \\
   $0$\\};
 \matrix[dmat,right=1.5cm of mat2] (degrees2) {$\mu_1$\\
   \vdots\\
   $\mu_k$ \\
 $n$\\};

 \draw  (mat1-1-1) edge["$\mu_1$"] (mat2-1-1)
 (mat1-1-1) edge["$\mu_k$"] (mat2-3-1)
 (mat1-1-1) edge["$n-\sum c_i$"] (mat2-4-1)
 (mat1-2-1) edge["$c_1$"] (mat2-4-1)
 (mat1-4-1) edge["$c_r$"] (mat2-4-1);

 \draw (mat1-1-1) -- +(95:.7) node[anchor=south] {fixed points};

 \draw  (mat2-1-1) -- +(30:.7) node[anchor=west] {$x_1$}
 (mat2-3-1) -- +(30:.7) node[anchor=west] {$x_k$}
 (mat2-4-1) -- +(30:.7) node[anchor=west] {$x_{k+1}$};

\end{tikzpicture}}

There are two branch points outside the genus $1$ component, which must be the images of
$x_1,\dots,x_k,x_{k+1}$ and of $P_{1,1}$. This means there is no extra ramification on any genus $0$ component, which is why each top right bubble
connects only to the genus $1$ component. The bottom right bubble must have some ramification beyond what is pictured here, which is necessarily
$P_{1,1}$. So our diagram looks like:

  \scalebox{\BRANCHEDFACTOR}{
  \begin{tikzpicture}[thick,amat/.style={matrix of nodes,nodes in empty cells,
  row sep=2.5em,rounded corners,
  nodes={draw,solid,circle,minimum size=1.0cm}},
  dmat/.style={matrix of nodes,nodes in empty cells,row sep=2.5em,nodes={minimum size=1.0cm},draw=myred},
  fsnode/.style={fill=myblue},
  ssnode/.style={fill=mygreen}]

  \matrix[amat,nodes=fsnode] (mat1) {$1$\\
    $0$\\
    \vdots \\
  $0$\\};

  \matrix[dmat,left=2cm of mat1] (degrees1) {$d-r$\\
    $1$\\
    \vdots \\
  $1$\\};

 \matrix[amat,right=7cm of mat1,nodes=ssnode] (mat2) {$0$\\
   \vdots\\
   $0$ \\
   $0$\\};
 \matrix[dmat,right=1.5cm of mat2] (degrees2) {$\mu_1$\\
   \vdots\\
   $\mu_k$ \\
 $n$\\};

 \draw  (mat1-1-1) edge["$\mu_1$"] (mat2-1-1)
 (mat1-1-1) edge["$\mu_k$"] (mat2-3-1)
 (mat1-1-1) edge["$n-r$"] (mat2-4-1)
 (mat1-2-1) edge["$1$"] (mat2-4-1)
 (mat1-4-1) edge["$1$"] (mat2-4-1);

 \draw (mat1-1-1) -- +(95:.7) node[anchor=south] {fixed points};

 \draw  (mat2-1-1) -- +(30:.7) node[anchor=west] {$x_1$}
 (mat2-3-1) -- +(30:.7) node[anchor=west] {$x_k$}
 (mat2-4-1) -- +(30:.7) node[anchor=west] {$x_{k+1}$}
 (mat2-4-1) -- +(330:.7) node[anchor=west] {$P_{1,1}$};

  \end{tikzpicture}}

  As there is no extra ramification outside the genus $1$ component,
  the dimension reduction of $r$ should match the codimension reduction
  corresponding to the removal of $P_{1,1}$, which is $y-1$. So $r=y-1$ and we
  have the first term in the claim, since $H((n),(y,1,\dots),(n-y+1,1,\dots))=1$.

  Now consider the case where $x_1,\dots,x_k,x_{k+1}$ are on the left side. In this
  case, the branch points not coming from the genus $1$ component will be
  the image of $P_{1,1}$ and the image of another point $p_{i,1}$ (with ramification index $z\in\mathfrak a$). Consider the possibility that
  $x_1,\dots,x_k$ are not all on the genus $1$ component:

    \scalebox{\BRANCHEDFACTOR}{
  \begin{tikzpicture}[thick,amat/.style={matrix of nodes,nodes in empty cells,
  row sep=2.5em,rounded corners,
  nodes={draw,solid,circle,minimum size=1cm}},
  dmat/.style={matrix of nodes,nodes in empty cells,row sep=2.5em,nodes={minimum size=1cm},draw=myred},
  fsnode/.style={fill=myblue},
  ssnode/.style={fill=mygreen}]

    \matrix[amat,nodes=fsnode] (mat1) {$0$\\
      $1$\\};

    \matrix[amat,right=2cm of mat1,nodes=ssnode] (mat2) {$0$\\};

      \matrix[dmat,left=1cm of mat1] (degrees1) {$\mu_1$\\
    $d-n-\mu_1$\\};

            \matrix[dmat,right=0.5cm of mat2] (degrees2) {$e$\\};
  
 \draw  (mat1-1-1) edge["$\mu_1$"] (mat2-1-1)
 (mat1-2-1) edge["$e-\mu_1$"] (mat2-1-1);

 \draw (mat1-1-1) -- +(120:.7) node[anchor=south] {$x_1$}
 (mat1-2-1) -- +(120:.7) node[anchor=south] {$x_2,\dots,x_k$};

 \draw (mat2-1-1) -- +(100:.7) node[anchor=south] {$e$ ramif};

  \end{tikzpicture}}

  This would require the genus $0$ on the right side to have multiple branch points contributing
  $e$ ramification,
  but $P_{1,1}$ cannot live on this component (because of the stabilization condition), a contradiction.
  Therefore, $x_1,\dots,x_k$ all live on the genus $1$ component.

  The points $P_{1,1}$ and $p_{i,1}$ cannot lie on separate components, because the following diagram cannot be completed:

    \scalebox{\BRANCHEDFACTOR}{
        \begin{tikzpicture}[thick,amat/.style={matrix of nodes,nodes in empty cells,
  row sep=2.5em,rounded corners,
  nodes={draw,solid,circle,minimum size=1.0cm}},
  dmat/.style={matrix of nodes,nodes in empty cells,row sep=2.5em,nodes={minimum size=1.0cm},draw=myred},
  fsnode/.style={fill=myblue},
  ssnode/.style={fill=mygreen}]

  \matrix[amat,nodes=fsnode] (mat1) {$1$\\
    $0$\\};

  \matrix[dmat,left=2cm of mat1] (degrees1) {$d-n$\\
    $n$\\};

 \matrix[amat,right=3cm of mat1,nodes=ssnode] (mat2) {$0$\\
   $0$\\};
 \matrix[dmat,right=1.5cm of mat2] (degrees2) {$z$\\
   $y$\\};

 \draw  (mat1-2-1) edge["$y$"] (mat2-2-1);

 \draw (mat1-1-1) -- +(115:.7) node[anchor=south] {fixed points}
 (mat1-1-1) -- +(260:.7) node[anchor=north] {$x_1,\dots,x_k$}
 (mat1-2-1) -- +(260:.7) node[anchor=east] {$x_{k+1}$};

 \draw (mat2-1-1) -- +(75:.7) node[anchor=south] {$p_{i,1}$}
 (mat2-2-1) -- +(265:.7) node[anchor=north] {$P_{1,1}$};

 \end{tikzpicture}}

  Therefore $P_{1,1}$ and $p_{i,1}$ lie on the same genus $0$ component, all other genus $0$ components on the right side have degree $1$,
  and we end up with:

    \scalebox{\BRANCHEDFACTOR}{
        \begin{tikzpicture}[thick,amat/.style={matrix of nodes,nodes in empty cells,
  row sep=2.5em,rounded corners,
  nodes={draw,solid,circle,minimum size=1.0cm}},
  dmat/.style={matrix of nodes,nodes in empty cells,row sep=2.5em,nodes={minimum size=1.0cm},draw=myred},
  fsnode/.style={fill=myblue},
  ssnode/.style={fill=mygreen}]

  \matrix[amat,nodes=fsnode] (mat1) {$1$\\
    $0$\\};

  \matrix[dmat,left=2cm of mat1] (degrees1) {$d-n$\\
    $n$\\};

 \matrix[amat,right=3cm of mat1,nodes=ssnode] (mat2) {$0$\\
   \vdots\\
   $0$ \\
   $0$\\};
 \matrix[dmat,right=1.5cm of mat2] (degrees2) {$1$\\
   \vdots\\
   $1$ \\
   $e+n$\\};

 \draw  (mat1-1-1) edge["$1$"] (mat2-1-1)
 (mat1-1-1) edge["$1$"] (mat2-3-1)
 (mat1-1-1) edge["$e$"] (mat2-4-1)
 (mat1-2-1) edge["$n$"] (mat2-4-1);

 \draw (mat1-1-1) -- +(115:.7) node[anchor=south] {fixed points}
 (mat1-1-1) -- +(260:.7) node[anchor=north] {$x_1,\dots,x_k$}
 (mat1-2-1) -- +(260:.7) node[anchor=north] {$x_{k+1}$};

 \draw (mat2-4-1) -- +(75:.7) node[anchor=south] {$P_{1,1}$}
 (mat2-4-1) -- +(265:.7) node[anchor=north] {$p_{i,1}$};

        \end{tikzpicture}}

        By Riemann-Hurwitz for the bottom-right component, 
        \[
        (e+n-2)+(y-1)+(z-1)=2(e+n)-2\implies y+z-2=e+n\implies e=y+z-n-2.
        \]
        By our calculation of $H((y+z-n-2,n),(z,1,\dots),(y,1,\dots))$ (and noting that
        we should multiply by $2$ if $y+z-n-2=n$, since the corresponding edges should be
        distinguished), we end up with the term in the claim.
        
\end{proof}

\subsection{Reduction of $\a$}
\begin{thm}
  \label{thm:reducea}
  For $n>0$,
  \begin{align*}
    S_{\a,\b}(\mu, n, m) &= \sum_{\substack{z\in\mathfrak a,\ z\leq n+m \\ n+m-z+1>0}}\min(n,m,z-1,n+m-z+1)S_{\a-\{z\},\b}(\mu,n+m-z+2) \\
    &+\sum_{\substack{z_1,z_2\in\a-\{2\} \\ z_1+z_2-n-m-3>0}}X_{z_1,z_2,n,m}S_{\a-\{z_1,z_2\},\b+\{z_1+z_2-n-m-3\}}(\mu)
  \end{align*}
  where
  \[
  X_{z_1,z_2,n,m}=H((z_1+z_2-n-m-3,n,m),(z_1,1,\dots),(z_2,1,\dots))\cdot \frac{|\Aut(n,m,z_1+z_2-n-m-3)|}{|\Aut(z_1,z_2)|}
  \]
  These sums are over $\a$ as a set rather than a multiset. 
\end{thm}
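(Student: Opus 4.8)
The plan is to run the admissible–covers degeneration of Theorem~\ref{thm:admissible} exactly as in the proof of Theorem~\ref{thm:reduceb}, but with the ``merging two points'' graph $A$ chosen so that the two merged points are the distinguished fixed points $x$ and $x'$ of the $(\mu,n,m)$–fiber, of ramification indices $n$ and $m$. Thus $A$ has a genus $1$ vertex carrying $x_1,\dots,x_{\ell(\mu)}$ together with the marked points of all the $\b$–fibers, and a genus $0$ vertex carrying exactly $x$ and $x'$; here $|\Aut(A)|=1$ unless $n=m$, in which case it is $2$ and this factor is absorbed in the normalization as in Theorem~\ref{thm:reduceb}. Applying Theorem~\ref{thm:admissible} expresses $\tfrac{1}{|\Aut(A)|}W_{1,\sigma,\nu}$ as a sum over admissible covers of stable graphs $\Gamma\to\Gamma'$: by the theorem $\Gamma'$ has a single edge, and by Lemma~\ref{lem:branchpoints} exactly two of the branch points of $\Gamma\to\Gamma'$ have fibers disjoint from the (unique) genus $1$ vertex $v$ of $\Gamma$. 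Moreover, since the genus $0$ vertex of $A$ carries $x$ and $x'$, any $\hat A$–structure on $\Gamma$ forces $x$ and $x'$ onto the ``genus $0$ side'', i.e.\ off $v$.

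The heart of the proof is the enumeration of all such $(\Gamma,\Gamma')$, organized by the positions of $x$, $x'$ and of the ramified points of the $\a$–fibers relative to $v$, with Riemann–Hurwitz applied to each genus $0$ component to pin down every ramification index. Impossible configurations are discarded by the usual ``this diagram cannot be completed'' arguments (for instance, $x$ and $x'$ cannot both lie on a component mapping away from $v$'s image with no further ramification, since Riemann–Hurwitz would demand a node of index exceeding that component's degree). The first surviving family consists of covers in which $x$, $x'$ and exactly one $\a$–ramified point, of index $z$, lie on a single genus $0$ component $C_0$ of degree $n+m$ mapping away from $v$'s image, with $x_1,\dots,x_{\ell(\mu)}$ on fully–ramified rational tails attached to $v$'s component (which contract under stabilization); Riemann–Hurwitz on $C_0$ forces its unique further node to have index $n+m-z+2$ (so $z\le n+m$, matching the summation condition), the local count is the genus $0$ Hurwitz number $H\big((n+m-z+2,1^{z-2}),(n,m),(z,1^{n+m-z})\big)$, which by the Hurwitz–number evaluation equals $\min(n,m,z-1,n+m-z+1)$, and after stabilization that node becomes a new marked point of index $n+m-z+2$ appended to $\mu$ while $z$ is removed from $\a$; this is the first sum. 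The second surviving family consists of covers in which two $\a$–ramified points, of indices $z_1$ and $z_2$, lie on a genus $0$ component $C_0$ of degree $z_1+z_2-3$, while $x$ and $x'$ sit on degree–$n$ and degree–$m$ rational bridges joining $C_0$ to $v$'s side through a common target node; fitting the index–$z_1$ and index–$z_2$ points on $C_0$ forces $z_1,z_2\ge 3$ (hence $z_1,z_2\in\a-\{2\}$), Riemann–Hurwitz on $C_0$ forces the three edges at that node to have indices $n$, $m$, $z_1+z_2-n-m-3$ (so the last quantity is positive), the edge of index $z_1+z_2-n-m-3$ becomes after stabilization the marked point of a new $\b$–fiber, and the local count on $C_0$ is $H\big((z_1+z_2-n-m-3,n,m),(z_1,1^{z_2-3}),(z_2,1^{z_1-3})\big)$; this is the second sum.

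What remains is the combinatorial bookkeeping. For each surviving $(\Gamma,\Gamma')$ one multiplies the local $W$–invariants at the vertices — each a genus $0$ Hurwitz number, except for the one at $v$, which is the lower $S$–invariant appearing in the claim — by the product $c_\Gamma$ of contracted edge labels and by the multinomial coefficients that distribute the unlabeled fiber points, and divides by $|\Aut(\Gamma)|$ and by the normalization converting $W$ to $N$. Exactly as in the proof of Theorem~\ref{thm:reduceb}, all of these factors cancel in pairs — in the second family the contributions of the two rational bridges together with the factor $c_\Gamma=nm$ from contracting their edges collapse to $1$ — leaving precisely the coefficients $\min(n,m,z-1,n+m-z+1)$ and $X_{z_1,z_2,n,m}$, the latter including the residual automorphism ratio $|\Aut(n,m,z_1+z_2-n-m-3)|/|\Aut(z_1,z_2)|$ that records the symmetries of the three edges at the common node and of the two consumed $\a$–fibers.

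I expect the main obstacle to be precisely the exhaustiveness of the case analysis together with this cancellation: verifying that no further configuration of $(\Gamma,\Gamma')$ contributes — in particular that covers with the $(\mu,n,m)$–fiber split across both sides of $\Gamma'$, or with additional ramified points on $C_0$, are either impossible or have vanishing Hurwitz number — and checking that the degenerate situations ($n=m$; $z_1=z_2$; a newly created part of $\mu$ or element of $\b$ coinciding with one already present; and the genuine absence, rather than silent omission, of a $z_i=2$ term) are each accounted for by the correct automorphism factor and match the stated form of $X_{z_1,z_2,n,m}$.
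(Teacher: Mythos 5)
Your proposal is correct and follows essentially the same route as the paper's own proof: the same two-vertex degeneration graph $A$ merging the index-$n$ and index-$m$ points, Theorem~\ref{thm:admissible} together with Lemma~\ref{lem:branchpoints}, the same two surviving families of covers of graphs with identical Riemann--Hurwitz computations, the same local Hurwitz numbers $H((n,m),(z,1,\dots),(n+m-z+2,1,\dots))$ and $H((z_1+z_2-n-m-3,n,m),(z_1,1,\dots),(z_2,1,\dots))$, and the same cancellation of the bookkeeping factors. The only small slip is your attribution of the $n=m$ factor of $2$ to $|\Aut(A)|$: since the legs of $A$ are labeled, $|\Aut(A)|=1$ regardless, and that factor instead enters through the marked count $\overline H$ on the degree-$(n+m)$ component (the $|\Aut(n,m)|$ correction to Lemma~\ref{lem:Hurwitz2}), exactly as in the paper's handling of the analogous point in Theorem~\ref{thm:reduceb}.
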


\begin{proof}
    We consider diagrams which stabilize to a genus $1$ curve containing $x_1,\dots,x_k$ and all 
  of the other fixed points, attached to a genus $0$ tail containing $q$ (with ramification index $n$) and $r$
  (with ramification index $m$).
  
  If $x_1,\dots,x_k,q,r$ are on the right side, then there is one more branch point arising as the image of some $s$ (with
  ramification index $z$). Suppose $q$ and $r$ are on different components:

    \scalebox{\BRANCHEDFACTOR}{
  \begin{tikzpicture}[thick,amat/.style={matrix of nodes,nodes in empty cells,
  row sep=2.5em,rounded corners,
  nodes={draw,solid,circle,minimum size=1.0cm}},
  dmat/.style={matrix of nodes,nodes in empty cells,row sep=2.5em,nodes={minimum size=1.0cm},draw=myred},
  fsnode/.style={fill=myblue},
  ssnode/.style={fill=mygreen}]

  \matrix[amat,nodes=fsnode] (mat1) {$1$\\
    $0$\\};

 \matrix[amat,right=7cm of mat1,nodes=ssnode] (mat2) {$0$\\
   \vdots\\
   $0$ \\
   $0$\\
 $0$\\};

 \draw  (mat1-1-1) edge["$\mu_1$"] (mat2-1-1)
 (mat1-1-1) edge["$\mu_k$"] (mat2-3-1)
 (mat1-1-1) edge["$c$"] (mat2-4-1)
 (mat1-2-1) edge["$e$"] (mat2-4-1)
 (mat1-2-1) edge["$m$"] (mat2-5-1);

 \draw (mat1-1-1) -- +(95:.7) node[anchor=south] {fixed points};

 \draw  (mat2-1-1) -- +(30:.7) node[anchor=west] {$x_1$}
 (mat2-3-1) -- +(30:.7) node[anchor=west] {$x_k$}
 (mat2-4-1) -- +(30:.7) node[anchor=west] {$q$}
 (mat2-5-1) -- +(30:.7) node[anchor=west] {$r$};

  \end{tikzpicture}}

  (Without loss of generality, we suppose the component containing $q$ is the one that connects to the genus $1$.)
  There is extra ramification on both the component containing $q$ and the lower left component, a contradiction. Therefore
  $q$ and $r$ lie on the same component, and our picture must look like:

    \scalebox{\BRANCHEDFACTOR}{
    \begin{tikzpicture}[thick,amat/.style={matrix of nodes,nodes in empty cells,
  row sep=2.5em,rounded corners,
  nodes={draw,solid,circle,minimum size=1.0cm}},
  dmat/.style={matrix of nodes,nodes in empty cells,row sep=2.5em,nodes={minimum size=1.0cm},draw=myred},
  fsnode/.style={fill=myblue},
  ssnode/.style={fill=mygreen}]

  \matrix[amat,nodes=fsnode] (mat1) {$1$\\
    $0$\\
    \vdots\\
    $0$\\};

    \matrix[dmat,left=1cm of mat1] (degrees1) {$d-e$\\
    $1$\\
    \vdots \\
  $1$\\};

 \matrix[amat,right=8cm of mat1,nodes=ssnode] (mat2) {$0$\\
   \vdots\\
   $0$ \\
   $0$\\};

     \matrix[dmat,right=1cm of mat2] (degrees2) {$\mu_1$\\
    \vdots \\
    $\mu_k$\\
     $n+m$\\};

 \draw  (mat1-1-1) edge["$\mu_1$"] (mat2-1-1)
 (mat1-1-1) edge["$\mu_k$"] (mat2-3-1)
 (mat1-1-1) edge["$n+m-e$" shift={(1.1,-0.7)}] (mat2-4-1)
 (mat1-2-1) edge["$1$"] (mat2-4-1)
 (mat1-4-1) edge["$1$"] (mat2-4-1);

 \draw (mat1-1-1) -- +(95:.7) node[anchor=south] {fixed points};

 \draw  (mat2-1-1) -- +(30:.7) node[anchor=west] {$x_1$}
 (mat2-3-1) -- +(30:.7) node[anchor=west] {$x_k$}
 (mat2-4-1) -- +(30:.7) node[anchor=west] {$q$}
 (mat2-4-1) -- +(50:.7) node[anchor=west] {$r$}
 (mat2-4-1) -- +(300:.7) node[anchor=west] {$s$};

    \end{tikzpicture}}

    By Riemann-Hurwitz for the lower right component,
    \[
    (n+m-e-1)+(n+m-2)+(z-1)=2(n+m)-2\implies e=z-2
    \]
    yielding the first term in the claim using Lemma~\ref{lem:Hurwitz2} to simplify
    $H((n,m),(z,1,\dots),(n+m-z+2,1,\dots))$.

    If $x_1,\dots,x_k,q,r$ are on the left side, then there are two branch points arising as the images of $s_1$ (with
    ramification index $z_1$) and $s_2$ (with ramification index $z_2$). By logic similar to the last proof, the
    $x_1,\dots,x_k$ all lie on the genus $1$ component.

    Consider the possibility that $q$ and $r$ lie on the same component:

      \scalebox{\BRANCHEDFACTOR}{
      \begin{tikzpicture}[thick,amat/.style={matrix of nodes,nodes in empty cells,
  row sep=2.5em,rounded corners,
  nodes={draw,solid,circle,minimum size=1.0cm}},
  dmat/.style={matrix of nodes,nodes in empty cells,row sep=2.5em,nodes={minimum size=1.0cm},draw=myred},
  fsnode/.style={fill=myblue},
  ssnode/.style={fill=mygreen}]

  \matrix[amat,nodes=fsnode] (mat1) {$1$\\
    $0$\\};

 \matrix[amat,right=7cm of mat1,nodes=ssnode] (mat2) {$0$\\
   \vdots\\
   $0$ \\
 $0$\\};

 \draw  (mat1-1-1) edge["$1$"] (mat2-1-1)
 (mat1-1-1) edge["$1$"] (mat2-3-1)
 (mat1-1-1) edge["$c$"] (mat2-4-1)
 (mat1-2-1) edge["$n+m$"] (mat2-4-1);

 \draw (mat1-1-1) -- +(95:.7) node[anchor=south] {fixed points}
 (mat1-1-1) -- +(270:.7) node[anchor=north] {$x_1,\dots,x_k$}
 (mat1-2-1) -- +(270:.7) node[anchor=north] {$q$}
 (mat1-2-1) -- +(255:.7) node[anchor=north] {$r$};

      \end{tikzpicture}}

      There must be at least one additional branch point from the lower left component and at least two additional branch
      points from the lower right component, a contradiction. Therefore, $q$ and $r$ lie on separate components:

        \scalebox{\BRANCHEDFACTOR}{
          \begin{tikzpicture}[thick,amat/.style={matrix of nodes,nodes in empty cells,
  row sep=2.5em,rounded corners,
  nodes={draw,solid,circle,minimum size=1.0cm}},
  dmat/.style={matrix of nodes,nodes in empty cells,row sep=2.5em,nodes={minimum size=1.0cm},draw=myred},
  fsnode/.style={fill=myblue},
  ssnode/.style={fill=mygreen}]

  \matrix[amat,nodes=fsnode] (mat1) {$1$\\
    $0$\\
    $0$\\};

    \matrix[dmat,left=1cm of mat1] (degrees1) {$d-n-m$\\
    $n$\\
  $m$\\};

 \matrix[amat,right=4cm of mat1,nodes=ssnode] (mat2) {$0$\\
   \vdots\\
   $0$ \\
   $0$\\};

     \matrix[dmat,right=1cm of mat2] (degrees2) {$1$\\
    \vdots \\
    $1$\\
     $c+n+m$\\};

 \draw  (mat1-1-1) edge["$1$"] (mat2-1-1)
 (mat1-1-1) edge["$1$"] (mat2-3-1)
 (mat1-1-1) edge["$c$"] (mat2-4-1)
 (mat1-2-1) edge["$n$"] (mat2-4-1)
 (mat1-3-1) edge["$m$"] (mat2-4-1);

 \draw (mat1-1-1) -- +(95:.7) node[anchor=south] {fixed points}
 (mat1-1-1) -- +(245:.7) node[anchor=north] {$x_1,\dots,x_k$}
 (mat1-2-1) -- +(245:.7) node[anchor=north] {$q$}
 (mat1-3-1) -- +(245:.7) node[anchor=north] {$r$};

 \draw  (mat2-4-1) -- +(30:.7) node[anchor=west] {$s_1$}
 (mat2-4-1) -- +(300:.7) node[anchor=west] {$s_2$};

          \end{tikzpicture}}
          
          By Riemann-Hurwitz for the lower-right component,
          \[
          (c+n+m-3)+(z_1-1)+(z_2-1)=2(c+n+m)-2\implies c=z_1+z_2-n-m-3
          \]
          yielding the second term of the claim. The additional factors arise because
          the edges from the bottom right component should be distinguished, while $s_1$ and
          $s_2$ should not be distinguished.
\end{proof}

It remains to calculate $S_{\{a_1,a_2,a_3\},\emptyset}(d)$ as a base case:

\begin{thm}
  \label{thm:Sabbase}
  When $\a=\{a_1,a_2,a_3\}$, $S_{\a,\emptyset}(d)$ is given by
  \[
  \sum_{i=1}^3\sum_{e+f=d-a_i+2}\frac{2(a_j+a_k-2) H((d),(a_i,1,\dots),(e,f,1,\dots)) H((e,f),(a_j,1,\dots),(a_k,1,\dots)) |\Aut(e,f,1^{d-e-f})|}{(a_i-2)!|\Aut(a_1,a_2,a_3)|}
  \]
  where $\{1,2,3\}=\{i,j,k\}$.
\end{thm}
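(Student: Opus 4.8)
The plan is to evaluate $S_{\a,\emptyset}(d)=N_{1,\sigma,\nu}$, where $\sigma=((d),(a_1,1^{d-a_1}),(a_2,1^{d-a_2}),(a_3,1^{d-a_3}))$ and $\nu=((d),\emptyset,\emptyset,\emptyset)$ (so here $m=1$), by the genus reduction instance of Theorem~\ref{thm:admissible}, following the template of Example~\ref{eg:genusreduction}. Take $A$ to be the boundary divisor of $\mb_{1,1}$ consisting of a single genus $0$ vertex carrying a loop and the leg $x_1$, so that $|\Aut(A)|=2$, and let $\hat A$ be the unique stable graph reducing to $A$ (place all remaining marked points on that vertex). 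Theorem~\ref{thm:admissible} then writes $\frac12 W_{1,\sigma,\nu}$ as a sum, over admissible covers of graphs $\Gamma\to\Gamma'$ with $\Gamma'$ a genus $0$ graph having a single edge, of terms $\frac1{|\Aut(\Gamma)|}c_\Gamma W_{(\Gamma,\Gamma')}$.

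First I would classify the contributing $\Gamma\to\Gamma'$. Since $\Gamma'$ has one edge, the target curve has two rational components $X_1,X_2$ joined at a node. Because $x_1$ is totally ramified over its image $q_1$, the fiber over $q_1$ is $\{x_1\}$ with multiplicity $d$, so there is exactly one source component $w_1$ over $X_1$, of degree $d$. All source components have genus $0$, so $\Gamma$ has first Betti number $1$; its unique cycle is therefore a pair of edges joining $w_1$ to a single component $w_2$ over $X_2$, with ramification indices $e$ and $f$ there, while every other component over $X_2$ is a degree $1$ rational curve attached to $w_1$ by a single ramification-index-$1$ edge. As the target is a stable $4$-pointed rational curve, its four branch points split $2$--$2$ between $X_1$ and $X_2$; since $q_1\in X_1$, exactly one branch point carrying a profile $(a_\bullet,1,\dots,1)$ lies on $X_1$, say the one with value $a_i$, the other two values being $a_j,a_k$. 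Riemann--Hurwitz for $w_1\to X_1$ (total ramification over $q_1$, profile $(a_i,1^{d-a_i})$ over the other branch point on $X_1$, profile $(e,f,1^{d-e-f})$ over the node) forces $e+f=d-a_i+2$, equivalently $d-e-f=a_i-2$ degree-$1$ tails; Riemann--Hurwitz for $w_2\to X_2$ is then consistent and yields $e+f=a_j+a_k-2$. No other configuration occurs: a longer cycle, extra edges between $w_1$ and $w_2$, or a cycle avoiding $w_1$ would force source genus $>1$; a $3$--$1$ split of the branch points would violate stability of the target; and Riemann--Hurwitz rules out any component over $X_2$ of degree $>1$ other than $w_2$. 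Hence the contributing data are a choice of $i\in\{1,2,3\}$ together with an unordered pair $(e,f)$ with $e+f=d-a_i+2$.

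For fixed $(i,e,f)$, the vertex factor $W_{(\Gamma,\Gamma')}$ is a product of: a Hurwitz number for $w_1$, a degree $d$ genus $0$ cover with profiles $(d)$, $(a_i,1^{d-a_i})$, $(e,f,1^{d-e-f})$ over its three special points; a Hurwitz number for $w_2$, a degree $e+f$ genus $0$ cover with profiles $(e,f)$, $(a_j,1^{e+f-a_j})$, $(a_k,1^{e+f-a_k})$; and the trivial factor $1$ for each of the $a_i-2$ tails. Stabilizing $\Gamma$ to $\hat A$ contracts one of the two $w_1$--$w_2$ edges and all $a_i-2$ tail edges, so summing $c_\Gamma$ over the two resulting $\hat A$-structures gives $e+f$; together with the $|\Aut(A)|=2$ on the left-hand side this produces the prefactor $2(e+f)=2(a_j+a_k-2)$ in the statement. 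What remains is a bookkeeping computation: counting the ways of distributing the non-remembered points of the three ramification fibers among $w_1$, $w_2$ and the $a_i-2$ degree-$1$ tails; accounting for $|\Aut(\Gamma)|$ (which contains the swap of the two $w_1$--$w_2$ edges when $e=f$); dividing by $|\Aut(A)|=2$; and converting the labeled objects $W$ and $\overline H=|\Aut(\text{profiles})|\cdot H$ into the unlabeled $N=S_{\a,\emptyset}(d)$ and $H$ via the normalization that divides $W_{\sigma_1,\sigma_2,\sigma_3}(\mu)$ by $|\Aut(a_1,a_2,a_3)|\prod_l(d-a_l)!$. The factorials appearing on the two sides largely cancel, leaving a net factor $(a_i-2)!$ from the $a_i-2$ tails in the denominator and the factor $|\Aut(e,f,1^{d-e-f})|$ (coming out of $\overline H$ for $w_1$) in the numerator, and one recovers exactly the claimed summand; summing over $i$ and over the pairs $(e,f)$ with $e+f=d-a_i+2$ then proves the theorem. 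As a numerical check, for $d=4$ and $\a=\{4,2,2\}$ the formula evaluates to $3+6+6=15=N_{(4),(2,1,1),(2,1,1)}(4)$, consistent with Example~\ref{eg:genusreduction}.

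The main obstacle is precisely this last bookkeeping step: reconciling the several factorials produced by the distributions of the unramified points of the fibers $(a_\bullet,1,\dots,1)$ with those hidden inside $\overline H=|\Aut(\text{profiles})|\cdot H$ and inside the $W$-to-$N$ normalization, and handling the graph automorphisms correctly in the degenerate cases $e=f$ and $a_i=d$ (where, e.g., $|\Aut(e,f,1^{d-e-f})|=d!$ rather than $2\cdot(d-e-f)!$). Getting the prefactor $2(a_j+a_k-2)$ and, especially, the denominator $(a_i-2)!$ --- rather than, say, $(d-a_i)!$ --- right is where the care is needed; the geometric classification above follows the pattern already established for genus reduction in Example~\ref{eg:genusreduction} and Section~\ref{section:admissible}.
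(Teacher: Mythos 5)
Your proposal follows essentially the same route as the paper's proof: the same genus-reduction graph $A$ (a genus $0$ vertex with a loop and the leg $x_1$, so $|\Aut(A)|=2$), the same classification of contributing covers of graphs (a single degree-$d$ component over $X_1$, one doubly-attached component of degree $e+f=d-a_i+2$ over $X_2$, and $a_i-2$ degree-one tails), the same sum of $c_\Gamma$ over the two $\hat A$-structures giving $e+f$, and the same two Hurwitz factors. The bookkeeping you defer is exactly what the paper carries out explicitly --- the binomial counts for distributing the unmarked points of the fibers together with the $\overline H$-to-$H$ and $W$-to-$N$ normalizations --- and it yields precisely the net factors you predict, namely $(a_i-2)!=(d-e-f)!$ in the denominator and $|\Aut(e,f,1^{d-e-f})|$ in the numerator.
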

\begin{proof}
  We will want to calculate $W_{(a_1,1,\dots),(a_2,1,\dots),(a_3,1,\dots)}(d)$ and then divide by \[(d-a_1)!(d-a_2)!(d-a_3)!|\Aut(a_1,a_2,a_3)|\]
  Let $A$ be the standard graph for genus reduction; we will apply Theorem~\ref{thm:admissible}. Any admissible cover of graphs $\Gamma\to\Gamma'$ will
  have a single genus $0$ component on the left side containing
  $x_1$ with a double edge to a genus $0$ component on the right side.
  By Riemann-Hurwitz, the left component has another ramified
  point $p_{i,1}$, and the genus $0$ component to which it has a double
  edge has two ramified points $p_{j,1}$ and $p_{k,1}$. Our picture looks like:

    \scalebox{\BRANCHEDFACTOR}{
            \begin{tikzpicture}[thick,amat/.style={matrix of nodes,nodes in empty cells,
  row sep=2.5em,rounded corners,
  nodes={draw,solid,circle,minimum size=1.0cm}},
  dmat/.style={matrix of nodes,nodes in empty cells,row sep=2.5em,nodes={minimum size=1.0cm},draw=myred},
  fsnode/.style={fill=myblue},
  ssnode/.style={fill=mygreen}]

  \matrix[amat,nodes=fsnode] (mat1) {$0$\\};

    \matrix[dmat,left=1cm of mat1] (degrees1) {$d$\\};

 \matrix[amat,right=4cm of mat1,nodes=ssnode] (mat2) {$0$\\
      $0$\\
      $\vdots$\\
  $0$\\};

 \matrix[dmat,right=1cm of mat2] (degrees2) {$e+f$\\
   $1$ \\
    \vdots \\
    $1$\\};

 \draw  (mat1-1-1) edge["$e$",bend left] (mat2-1-1)
 (mat1-1-1) edge["$f$"] (mat2-1-1)
 (mat1-1-1) edge["$1$"] (mat2-2-1)
 (mat1-1-1) edge["$1$"] (mat2-4-1);

 \draw (mat1-1-1) -- +(95:.7) node[anchor=south] {$x_1$}
 (mat1-1-1) -- +(245:.7) node[anchor=north] {$p_{i,1}$};

 \draw  (mat2-1-1) -- +(30:.7) node[anchor=west] {$p_{j,1}$}
 (mat2-1-1) -- +(330:.7) node[anchor=west] {$p_{k,1}$};

            \end{tikzpicture}}

            Riemann-Hurwitz says that
            \[
            (a_i-1)+(d-1)+(e+f-2)=2d-2\iff e+f=d-a_i+2
            \]
            and
            \[
            (e+f-2)+(a_j-1)+(a_k-1)=2(e+f)-2\iff a_i+a_j+a_k=d+4
            \]
            where this second equality is already known.

            The number of choices for points not marked in this
            picture is computed as follows: first,
            there are $\binom{d-a_j}{e+f-a_j}$ choices
            for which of $p_{j,2},\dots$ to place on the top right component,
            and similarly $\binom{d-a_k}{e+f-a_k}$ choices
            for which of $p_{k,2},\dots$ to place on the top right component.
            Once these are fixed, each other genus $0$ component
            on the right can be identified by which element of
            $p_{j,2},\dots$ it contains, and there are
            then $(d-e-f)!$ choices for placement of
            the remaining $p_{k,2},\dots$ among
            these components.

            Therefore, the contribution to the right side of Theorem~\ref{thm:admissible} is
            \begin{align*}
              &\frac{c_{\Gamma}}{|\Aut(\Gamma)|}\binom{d-a_j}{e+f-a_j}\binom{d-a_k}{e+f-a_k}(d-e-f)!\cdot \\
              &\overline H((d),(a_i,1,\dots),(e,f,1,\dots))\cdot\overline H((e,f),(a_j,1,\dots),(a_k,1,\dots)) \\
              &=\frac{e+f}{|\Aut(e,f)|}\cdot\frac{(d-a_j)!(d-a_k)!}{(e+f-a_j)!(e+f-a_k)!((d-e-f)!)^2}\cdot (d-e-f)!\cdot H((d),(a_i,1,\dots),(e,f,1,\dots))\cdot \\
              &(d-a_i)!|\Aut(e,f,1,\dots)|H((e,f),(a_j,1,\dots),(a_k,1,\dots))\cdot (e+f-a_j)!(e+f-a_k)!|\Aut(e,f)| \\
              &=\frac{(e+f)\prod\limits_{\ell=1}^3(d-a_{\ell})!\cdot H((d),(a_i,1,\dots),(e,f,1,\dots))H((e,f),(a_j,1,\dots),(a_k,1,\dots)) |\Aut(e,f,1,\dots)|}{(d-e-f)!}
            \end{align*}
            and so the contribution to $S_{\a,\emptyset}(d)$ is double this
            (given $|\Aut(A)|=2$) divided by the normalizing factor, i.e.
            \[
            \frac{2(e+f)\cdot H((d),(a_i,1,\dots),(e,f,1,\dots))\cdot H((e,f),(a_j,1,\dots),(a_k,1,\dots))\cdot|\Aut(e,f,1,\dots)|}{(d-e-f)!\cdot |\Aut(a_i,a_j,a_k)|}
            \]
            Since $e+f=a_j+a_k-2$ and $d-e-f=a_i-2$, this proves the claim.
\end{proof}

\subsection{Applying recursions}

We now prove Theorem~\ref{thm:Sabcomputable}:

\begin{proof}
  Let $X=\{(a,b,k)\}=\Z_{\geq 0}^3$ with the total order where
  $(a,b,k)<(a',b',k')$
  if:
  \begin{itemize}
  \item $k<k'$, or
  \item $k=k'$ and $b<b'$, or
    \item $k=k'$, $b=b'$, and $a<a'$
  \end{itemize}
  We will prove by induction on $(a,b,k)$, under this order, that
  any $S_{\a,\b}(\mu)$ with $\ell(\a)=a$, $\ell(\b)=b$, and $\ell(\mu)=k$
  can be computed by a finite sequence of applications
  of Theorems~\ref{thm:reduceb}, \ref{thm:reducea} and \ref{thm:Sabbase}.
  Induction is possible here because for invariants $S_{\a,\b}(\mu)$
  we require $\ell(\mu)\leq d$ and $\ell(\a),\ell(\b)\leq 2d$, implying
  that each $(a,b,k)$ is greater than only finitely many triples
  of interest.
  
  As a base case, if $(a,b,k)<(\infty,0,1)$, then
  this means either $k=0$ (in which case
  our convention is $S_{\a,\b}(\emptyset)=0$) or $k=1$ and $b=0$
  (in which case we can directly apply Theorem~\ref{thm:Sabbase} to find
  $S_{\a,\emptyset}(d)$).

  Now suppose that $(a,b,k)>(\infty,0,1)$
  and that the theorem is true for any $(a',b',k')<(a,b,k)$. There are two cases:
  \begin{enumerate}
  \item If $k>1$, then Theorem~\ref{thm:reducea} reduces $S_{\a,\b}(\mu)$ to
    a sum of terms involving triples less than $(a,b,k)$.
  \item If $k=1$ and $b>0$, then Theorem~\ref{thm:reduceb} reduces
    $S_{\a,\b}(\mu)$ to a sum of terms involving triples less than $(a,b,k)$.
  \end{enumerate}
  In either case the result is proven.
\end{proof}

Define \[
U_{d,a}=S_{\{3\}^{d-2}\cup\{2\}^{5-a},\emptyset}(a,1^{d-a}).
\]
Given a generic $d$-pointed genus $1$ curve $(E,x_1,\dots,x_d)$, $U_{d,1}$
is the number of degree $d$ covers $(E,x_1,\dots,x_d)\to(\P^1,0)$
with $d-2$ unspecified points of $E$ having ramification index $3$.
We now turn to the proof of Theorem~\ref{thm:deg4}:

\begin{proof}
  Firstly, Theorem~\ref{thm:reducea} tells us that
  \[
  U_{d,a}=U_{d,a+1}+U_{d-1,a}\text{ for }a>1,\qquad U_{d,1}=U_{d,2}+U_{d-2,1}
  \]
  This means that the function $f_a(d)=U_{d,a}$ has $f_{a+1}$ as its first differences
  for $a>1$, and $f_1$ has $f_2$ as its first differences with step size $2$.

  We can use Theorem~\ref{thm:Sabbase} to compute:
  \begin{itemize}
  \item $U_{5,5}=f_5(5)=16$
  \item $U_{4,4}=f_4(4)=20$
  \item $U_{3,3}=f_3(3)=8$
    \item $U_{2,2}=f_2(2)=1$
    \end{itemize}
  By Riemann-Hurwitz, $U_{d,a}=f_a(d)=0$ if $a\geq 6$. Each $f_a$ for $a>1$ is determined by $f_{a+1}$ and a value at a point,
  and $f_1$ is determined by $f_2$ and its value at two points (e.g., $f_1(1)=0$ and $f_1(2)=1$).
  Since the quartic in the theorem statement fulfills the same conditions (as can be verified by computer), we are done.

\end{proof}

\appendix

\section{Hurwitz counts}
\label{appendix:hurwitz}

Hurwitz numbers can be interpreted as solutions to counting problems in the symmetric group,
and basic results about these numbers arise from some combinatorial arguments.
We state some results that are used in this paper; proofs are in Appendix A of \cite{Thesis}.

\begin{lem}
  \label{lem:twoscomplete}
For $d=2k$ even,
\[
H((d),(2^k),(2^k),(2,1^{d-2}))=\frac k2
\]
\end{lem}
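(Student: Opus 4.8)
The plan is to evaluate this Hurwitz number via the symmetric-group description of \cite{Completed} recalled above. Set $d=2k$. Then $H((d),(2^k),(2^k),(2,1^{d-2}))$ equals $\tfrac 1{(2k)!}$ times the number of quadruples $(s_1,s_2,s_3,s_4)\in S_{2k}^4$ with $s_1$ a $2k$-cycle, $s_2,s_3$ fixed-point-free involutions, $s_4$ a transposition, and $s_1s_2s_3s_4=1$. Since $s_4=(s_1s_2s_3)^{-1}$ is determined by the rest, this count is the number of triples $(s_1,s_2,s_3)$ of the prescribed types such that $s_1s_2s_3$ is a transposition $\tau$, equivalently such that $s_2s_3=s_1^{-1}\tau$. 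Multiplying the $2k$-cycle $s_1^{-1}$ by a transposition of two of its (all) points yields a permutation of cycle type $(\ell,2k-\ell)$ for some $\ell$, so this will be the cycle type of $s_2s_3$.

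The first ingredient is that a product $s_2s_3$ of two fixed-point-free involutions of $\{1,\dots,2k\}$ has every cycle length occurring with even multiplicity. Viewing the transpositions making up $s_2$ and those making up $s_3$ as two perfect matchings, their union decomposes $\{1,\dots,2k\}$ into alternating closed walks of even length $2m$, and a component of length $2m$ contributes exactly two $m$-cycles to $s_2s_3$ (with doubled edges, $m=1$, giving pairs of fixed points). Since $(\ell,2k-\ell)$ has only two parts, this forces $\ell=k$, so $s_2s_3$ has cycle type $(k,k)$; and for a fixed $2k$-cycle, the transpositions $\tau$ producing this type are precisely the $k$ transpositions swapping a pair of antipodal points of the cycle.

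The crux --- and the step I expect to be the main obstacle --- is the counting lemma: for any fixed $\gamma\in S_{2k}$ of cycle type $(k,k)$, say $\gamma=\gamma_A\gamma_B$ with $\gamma_A$ a $k$-cycle on a $k$-set $A$ and $\gamma_B$ a $k$-cycle on the complementary set $B$, there are exactly $k$ ordered pairs $(b,c)$ of fixed-point-free involutions with $bc=\gamma$. I would prove this by an explicit construction. By the alternating-cycle analysis above, $bc=\gamma$ of type $(k,k)$ forces the union of the matchings $b$ and $c$ to be a single $2k$-cycle whose vertices, read cyclically, alternate between $B$ and $A$, with the $A$-vertices occurring in $\gamma_A$-order and the $B$-vertices in $\gamma_B^{-1}$-order. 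Concretely, to each $(a_0,b_0)\in A\times B$ one associates the cyclic word with $v_{2i+1}=\gamma^{-i}(b_0)$ and $v_{2i}=\gamma^{i-1}(a_0)$, and sets $b=\{v_1v_2,v_3v_4,\dots\}$, $c=\{v_2v_3,v_4v_5,\dots,v_{2k}v_1\}$; a direct check shows $b,c$ are fixed-point-free involutions with $bc=\gamma$, that every such $(b,c)$ arises this way, and that the resulting map $A\times B\to\{(b,c):bc=\gamma\}$ is exactly $k$-to-one, the fibre over a given $(b,c)$ being $\{(b(b_0),b_0):b_0\in B\}$. Hence there are $k^2/k=k$ such pairs.

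Finally I would assemble the count: the pairs $(s_1,\tau)$ with $s_1$ a $2k$-cycle and $\tau$ an admissible transposition number $(2k-1)!\cdot k$, and each contributes $k$ choices of $(s_2,s_3)$ by the lemma, so there are $k^2(2k-1)!$ valid triples $(s_1,s_2,s_3)$. Dividing by $(2k)!$ gives $H((d),(2^k),(2^k),(2,1^{d-2}))=\dfrac{k^2(2k-1)!}{(2k)!}=\dfrac k2$. (Incidentally, since $s_1$ is a full $2k$-cycle the monodromy is automatically transitive, so the disconnected and connected counts coincide here.)
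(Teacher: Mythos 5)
Your proof is correct: the symmetric-group reformulation, the parity fact forcing $s_1^{-1}\tau$ to have type $(k,k)$ with exactly $k$ admissible (antipodal) transpositions $\tau$ per $2k$-cycle, and the key counting lemma that a fixed permutation of type $(k,k)$ has exactly $k$ ordered factorizations into two fixed-point-free involutions (via your alternating-cycle construction, whose fibres over a pair $(b,c)$ are indeed the $k$ choices of starting point $b_0\in B$) all check out, and the final tally $k^2(2k-1)!/(2k)! = k/2$ is right. The paper itself gives no proof of this lemma, deferring to Appendix A of \cite{Thesis} and indicating only that such results follow from the symmetric-group characterization of Hurwitz numbers, which is exactly the route you take.
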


\begin{lem}
  \label{lem:twoscycles}
  For $d=2k$ even,
  \[
  H((2^k),(2^k),(a,b))=\begin{cases}
  \frac 1d & a=b=k \\
  0 & \text{else}
  \end{cases}
  \]
\end{lem}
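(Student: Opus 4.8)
The plan is to pass to the symmetric-group characterization of Hurwitz numbers (the Proposition attributed to \cite{Completed}): writing $d=2k$, we have
\[
H((2^k),(2^k),(a,b)) = \frac{1}{d!}\bigl|\{(s_1,s_2,s_3)\in S_d^3 : s_i \text{ has the prescribed cycle type},\ s_1s_2s_3=1\}\bigr|.
\]
Since $s_1$ and $s_2$ are involutions, $s_3=(s_1s_2)^{-1}=s_2s_1$ is determined by the pair $(s_1,s_2)$, so it suffices to count pairs $(s_1,s_2)$ of fixed-point-free involutions in $S_d$ for which $s_1s_2$ has cycle type $(a,b)$, and then divide by $d!$.

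The structural input is that a product of two fixed-point-free involutions has a cycle type in which every part occurs an even number of times. To see this, view $s_1$ and $s_2$ as perfect matchings on the vertex set $\{1,\dots,d\}$; their (multi)union is a graph in which every vertex has degree $2$, hence a disjoint union of closed walks whose edges alternate between $s_1$ and $s_2$, each therefore of even length $2\ell$. Labelling the vertices $v_0,\dots,v_{2\ell-1}$ around such a walk with $\{v_{2i},v_{2i+1}\}\in s_2$ and $\{v_{2i+1},v_{2i+2}\}\in s_1$ (indices mod $2\ell$), one computes $(s_1s_2)(v_{2i})=v_{2i+2}$ and $(s_1s_2)(v_{2i+1})=v_{2i-1}$, so the walk contributes exactly two $\ell$-cycles of $s_1s_2$. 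Consequently, if $(a,b)$ with $a+b=d$, $a,b\ge 1$, is not of the form $(k,k)$ — in particular whenever $a\ne b$, and also in the degenerate case $b=0$ of a single $d$-cycle — no such pair exists and the Hurwitz number is $0$.

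When $a=b=k$, a cycle type with exactly two parts forces the union graph $s_1\cup s_2$ to be a single closed walk of length $d=2k$, i.e.\ a Hamiltonian cycle on $\{1,\dots,d\}$ (in particular $s_1$ and $s_2$ share no edge). Thus the count is $\sum_{s_1}\#\{s_2 : s_1\cup s_2 \text{ is a Hamiltonian cycle}\}$, the sum running over the $(d-1)!!=\tfrac{d!}{2^k k!}$ fixed-point-free involutions $s_1$. For a fixed perfect matching $M=s_1$, the inner quantity is the number of Hamiltonian cycles of $K_d$ containing $M$: such a cycle amounts to a cyclic ordering of the $k$ edges of $M$ together with an orientation of each, giving $\tfrac{(k-1)!}{2}\cdot 2^k = 2^{k-1}(k-1)!$, the factor $\tfrac12$ recording the overall reversal of the cycle. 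Hence the total is $\tfrac{d!}{2^k k!}\cdot 2^{k-1}(k-1)! = \tfrac{d!}{2k}$, and dividing by $d!$ gives $\tfrac{1}{2k}=\tfrac1d$. (The case $k=1$ is immediate and consistent: $s_3$ is the identity, so $s_1=s_2$ is the unique transposition and $H=\tfrac12=\tfrac1d$.)

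The only step needing genuine care is the Hamiltonian-cycle count $2^{k-1}(k-1)!$ — that is where I expect the bookkeeping to be most error-prone — but it can be cross-checked on small cases (e.g.\ $K_4$, where exactly two of the three Hamiltonian cycles contain a given perfect matching); everything else reduces to the pairing lemma for products of fixed-point-free involutions.
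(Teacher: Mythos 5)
Your proof is correct: the reduction via the $S_d$ characterization to counting pairs of fixed-point-free involutions, the matching-union/alternating-cycle argument showing every cycle length of $s_1s_2$ occurs with even multiplicity (hence vanishing unless $a=b=k$), and the count $(d-1)!!\cdot 2^{k-1}(k-1)!=d!/d$ giving $1/d$ are all sound, including the separate check at $k=1$. The paper itself defers the proof of this lemma to Appendix A of the cited thesis, which is described as using exactly this kind of symmetric-group combinatorics, so your argument follows the intended approach.
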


\begin{lem}
  \label{lem:twoseven}
For $d=2k$ even,
\[
H((d),(2^k),(2^{k-1},1,1))=\frac 12
\]
\end{lem}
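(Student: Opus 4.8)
The plan is to compute the Hurwitz number directly from the symmetric-group formula recalled above (the Proposition from \cite{Completed}) and to recognize the relevant permutations as elements of a dihedral group. By that formula, $H((d),(2^k),(2^{k-1},1,1))$ equals $\tfrac1{d!}$ times the number of triples $(s_1,s_2,s_3)\in S_d^3$ with $s_1$ a $d$-cycle, $s_2$ a fixed-point-free involution, $s_3$ an involution with exactly two fixed points, and $s_1s_2s_3=1$. (Since a $d$-cycle is present the group generated is transitive, so there is no disconnected contribution to worry about.) All $d$-cycles are conjugate in $S_d$ and the simultaneous-conjugation action makes every fibre of $(s_1,s_2,s_3)\mapsto s_1$ the same size, so this count is $(d-1)!\cdot M$, where, fixing $\sigma=(1\,2\,\cdots\,d)$, $M$ is the number of fixed-point-free involutions $s_2$ for which $s_3:=(\sigma s_2)^{-1}=s_2\sigma^{-1}$ has cycle type $(2^{k-1},1,1)$. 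Hence $H=M/d$, and it suffices to prove $M=k=d/2$.

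Next I would pin down which $s_2$ can occur. As $s_2^2=1$, the element $s_3=s_2\sigma^{-1}$ has inverse $\sigma s_2$, so $s_3$ is an involution if and only if $s_2\sigma^{-1}=\sigma s_2$, equivalently $s_2\sigma s_2^{-1}=\sigma^{-1}$, i.e.\ $s_2$ conjugates $\sigma$ to $\sigma^{-1}$. The set of such $s_2$ is a coset of the centraliser $C_{S_d}(\sigma)=\langle\sigma\rangle$, and together with $\langle\sigma\rangle$ it generates the dihedral group $D_d$ of order $2d$, realised as the symmetry group of the regular $d$-gon with vertices labelled $1,\dots,d$ in cyclic order; the elements conjugating $\sigma$ to $\sigma^{-1}$ are exactly the $d$ reflections. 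Since $d=2k$ is even, $k$ of these reflections have axis through two opposite vertices and hence fix exactly two points (cycle type $(2^{k-1},1,1)$, not $(2^k)$), while the other $k$ have axis through two opposite edge-midpoints and are fixed-point-free (cycle type $(2^k)$). Therefore $s_2$ must be one of the $k$ edge-axis reflections; in particular the "antipodal'' fixed-point-free involutions such as $\sigma^{k}$, which commute with $\sigma$ rather than inverting it, are correctly excluded.

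Finally I would verify that each of these $k$ candidates works. For an edge-axis reflection $s_2$, the product $s_3=s_2\sigma^{-1}$ is again a reflection, obtained from $s_2$ by post-composing with the rotation by one vertex; writing the reflections as $s_i=\sigma^{i}s_0$ one has $s_i\sigma^{-1}=s_{i+1}$, and for $d$ even the two families (vertex-axis, edge-axis) alternate with the index. Hence $s_3$ is a vertex-axis reflection, which has exactly two fixed points, i.e.\ cycle type $(2^{k-1},1,1)$, as required. So all $k$ edge-axis reflections are valid and $M=k$, giving $H((d),(2^k),(2^{k-1},1,1))=k/d=1/2$.

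The routine part is the symmetric-group bookkeeping in the first step; the only place needing care is the classification of the two families of reflections in $D_{2k}$ and the observation that multiplication by the primitive rotation interchanges them. As a sanity check one can look at $d=2$ (where $s_2=\sigma$ and $s_3=e$, so $M=1$) and $d=4$ (where the valid involutions are $(12)(34)$ and $(14)(23)$, while the rotation $(13)(24)$ is excluded, so $M=2$), in both cases recovering $H=1/2$.
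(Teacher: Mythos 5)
Your proposal is correct. The paper itself does not include a proof of this lemma (it defers all of Appendix~\ref{appendix:hurwitz} to Appendix A of \cite{Thesis}, indicating only that these are symmetric-group counting arguments), so there is nothing in the text to compare line by line; your argument is exactly the kind of combinatorial computation intended. The bookkeeping is right: conjugation-invariance gives $H=M/d$ with $M$ the number of fixed-point-free involutions $s_2$ for which $s_2\sigma^{-1}$ has type $(2^{k-1},1,1)$; the observation that this forces $s_2\sigma s_2^{-1}=\sigma^{-1}$, hence that $s_2$ is one of the $d$ reflections in $\langle\sigma,s_2\rangle\cong D_d$; and the even-$d$ dichotomy between the $k$ edge-axis (fixed-point-free) and $k$ vertex-axis (two-fixed-point) reflections, with right multiplication by $\sigma^{-1}$ interchanging the two families, so that all $k$ edge-axis reflections contribute and $M=k$, giving $H=1/2$. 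The only caveat is the degenerate case $d=2$, where the dihedral picture (``$D_d$ of order $2d$'' inside $S_2$) does not literally apply, but you dispose of that case by direct inspection, so the proof is complete as written.
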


\begin{lem}
  \label{lem:twosodd}
For $d=2k+1$ odd,
\[
H((d),(2^k,1),(2^k,1))=1
\]
\end{lem}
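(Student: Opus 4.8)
The plan is to evaluate this Hurwitz number directly from the symmetric-group characterization recalled above (the Proposition of \cite{Completed}). Because one of the three ramification profiles is $(d)$, the permutation realizing it is already a $d$-cycle, so the group it generates is transitive and the connected and disconnected counts agree; thus
\[
H\bigl((d),(2^k,1),(2^k,1)\bigr)=\frac1{d!}\,\bigl|\{(a,b,c)\in S_d^3:\ a\text{ is a }d\text{-cycle},\ b,c\text{ of cycle type }(2^k,1),\ abc=1\}\bigr|.
\]
Writing $c=b^{-1}a^{-1}$ and summing over the $(d-1)!$ distinct $d$-cycles $a$, all of which are conjugate and hence contribute equally, it suffices to prove that for one fixed $d$-cycle $a$ there are exactly $d$ involutions $b$ of cycle type $(2^k,1)$ for which $ba^{-1}$ again has type $(2^k,1)$; then $H=\tfrac1{d!}\cdot(d-1)!\cdot d=1$.

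To carry this out I would first note that, since $b$ is an involution, $c=ba^{-1}$ is an involution exactly when $(ba^{-1})^2=1$, i.e.\ when $ba^{-1}b^{-1}=a$, so the relevant $b$'s are precisely the elements conjugating $a^{-1}$ to $a$. These form a single left coset of the centralizer $C_{S_d}(a)=\langle a\rangle\cong\Z/d$, and the coset is nonempty since $a$ and $a^{-1}$ share a cycle type; hence there are exactly $d$ candidate $b$'s. Identifying $\{1,\dots,d\}$ with $\Z/d$ so that $a$ is translation by $1$, the map $j\mapsto -j$ conjugates $a^{-1}$ to $a$, so every element of the coset is a reflection $j\mapsto t-j$. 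Since $d$ is odd, $2$ is a unit mod $d$, so each reflection $j\mapsto t-j$ fixes exactly one point and is a product of $k=(d-1)/2$ transpositions, i.e.\ has type $(2^k,1)$; the same computation shows $ba^{-1}\colon j\mapsto t'-j$ has type $(2^k,1)$. Therefore all $d$ coset elements are valid, the count is $d$, and $H=1$.

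The only genuinely delicate point I anticipate is the cycle-type bookkeeping at the very end: one must check not just that $b$ and $ba^{-1}$ are involutions but that each has exactly $k$ transpositions, and this is precisely where the hypothesis that $d$ is odd is used (for $d$ even the $d$ coset elements split into reflections with $0$ and with $2$ fixed points, which is why the analogous even count is $\tfrac12$ in Lemma~\ref{lem:twoseven} rather than $1$). Everything else is formal; as a sanity check I would verify $d=3$ directly, where all three transpositions of $S_3$ work and give $H=1$. A more geometric alternative would be to observe that Riemann--Hurwitz forces the source curve to have genus $0$, so one is counting degree-$d$ self-maps of $\P^1$ totally ramified over one point and with profile $(2^k,1)$ over two others; the Chebyshev polynomial $T_d$ furnishes one such map (for $d$ odd, $T_d-1$ and $T_d+1$ each have $k$ double roots and one simple root), and one would then argue uniqueness up to the relevant automorphisms -- but the monodromy computation above is cleaner and avoids discussing moduli of maps.
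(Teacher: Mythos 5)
Your argument is correct: the reduction to counting, for a fixed $d$-cycle $a$, the involutions $b$ with $ba^{-1}b^{-1}=a$, the identification of these with the $d$ reflections $j\mapsto t-j$ of $\Z/d$, and the observation that oddness of $d$ forces each such reflection (and $ba^{-1}$) to have exactly one fixed point, yields $H=\tfrac 1{d!}\,(d-1)!\cdot d=1$, and the transitivity remark disposes of connectedness. This is essentially the same route the paper intends --- it proves these lemmas by the symmetric-group characterization, deferring the combinatorial details to Appendix A of \cite{Thesis} --- so there is nothing further to reconcile.
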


\begin{lem}
  \label{lem:Hurwitz2}
  If $a+b-2=d$, $1\leq n\leq d/2$, and $2\leq a\leq d/2+1$ (equivalently $2\leq a\leq b$), then
\[
H((d-n,n),(a,1,\dots,1),(b,1,\dots,1))=\begin{cases}
\frac 12\min(a-1,n)& d=2n \\
\min(a-1,n)&\text{else}
\end{cases}
\]
\end{lem}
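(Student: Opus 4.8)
The plan is to translate the statement into a count of factorizations in the symmetric group, and then to evaluate that count via the associated dessins d'enfants (bipartite maps on the sphere).

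First, by the symmetric-group description of Hurwitz numbers (the Proposition quoted from \cite{Completed}), $H((d-n,n),(a,1^{d-a}),(b,1^{d-b}))$ equals $\tfrac1{d!}$ times the number of triples $(s_1,s_2,s_3)$ with $s_1s_2s_3=1$, where $s_1$ has cycle type $(d-n,n)$, $s_2$ is a single $a$-cycle, and $s_3$ a single $b$-cycle, the monodromy group acting transitively (i.e.\ one counts connected covers). Since $s_1$ is determined by $s_2,s_3$, and the number of factorizations $\gamma=\alpha\beta$ of a permutation into an $a$-cycle times a $b$-cycle depends only on the cycle type of $\gamma$, this count equals $|C|\cdot M$, where $C$ is the conjugacy class of type $(d-n,n)$ and $M$ is the number of such factorizations of one fixed $\gamma_0\in C$. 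I would note that for $n\ge 2$ transitivity is automatic: $\gamma_0$ then has full support, and an intransitive $\langle\alpha,\beta\rangle$ would have to preserve the partition into the two cycles of $\gamma_0$, forcing $\mathrm{supp}(\alpha)$ and $\mathrm{supp}(\beta)$ each into a single cycle, hence $a+b=d$, contradicting $a+b=d+2$; for $n=1$ the transitive factorizations must be singled out. Using $|C|=d!/((d-n)n)$ when $d\neq 2n$ and $|C|=d!/(2n^2)$ when $d=2n$, the statement reduces to showing that the weighted count $\sum 1/|\Aut|$ over connected covers with this ramification is $\min(a-1,n)$, resp.\ $\tfrac12\min(a-1,n)$.

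Second, I would pass to dessins: such a connected cover corresponds to a bipartite map on the sphere with $d$ edges, black vertices of degrees $(a,1^{d-a})$, white vertices of degrees $(b,1^{d-b})$, and two faces of degrees $2(d-n)$ and $2n$; it has genus $0$ since the number of vertices and faces, $(b-1)+(a-1)+2=a+b=d+2$, equals $E+2$. The skeleton is rigid: every black leaf and every white leaf attaches to the unique hub of the opposite colour (a leaf–leaf edge would be a separate component, contradicting connectedness), and the black hub, carrying $d-b=a-2$ white leaves, is joined to the white hub by exactly $a-(d-b)=2$ edges — a double edge cutting the sphere into two regions which are the two faces. Letting $u$ and $v$ be the numbers of white-hub and black-hub leaves lying in the region bounded by the $2n$-face, reading off that face's degree gives $u+v=n-1$, with $0\le u\le a-2$ and $0\le v\le b-2$; since $b=d+2-a\ge n+1$ the bound $v\le b-2$ is automatic, so $u$ runs over $\{0,\dots,\min(a-2,n-1)\}$ and there are exactly $\min(a-1,n)$ such maps. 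When $d\neq 2n$ the two faces have distinct degrees, each map is rigid ($|\Aut|=1$), and $H=\min(a-1,n)$; when $d=2n$ the hypothesis $a\le d/2+1$ gives $\min(a-1,n)=a-1$, the orientation-preserving symmetry exchanging the two equal faces identifies the map with parameter $u$ with the one with parameter $a-2-u$, and a short bookkeeping of this involution (one fixed map, with $|\Aut|=2$, exactly when $a$ is even) yields $\sum 1/|\Aut|=\tfrac12(a-1)=\tfrac12\min(a-1,n)$.

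The step I expect to be the main obstacle is the rigidity analysis — proving that every admissible dessin has precisely the skeleton above with the stated automorphism group (equivalently, that the only freedom in writing $\gamma_0$ as an $a$-cycle times a $b$-cycle is the choice of the splitting parameter $u$) — together with a clean treatment of the two boundary cases $n=1$ (where $\gamma_0$ has a fixed point and genuinely disconnected factorizations appear and must be discarded) and $d=2n$ (the face-swap symmetry). I would sanity-check the outcome on small cases — for instance against a direct enumeration of triples in $S_4$ with $(d-n,n)\in\{(3,1),(2,2)\}$ and $a=b=3$, or $a=2,b=4$ — before writing out the general argument.
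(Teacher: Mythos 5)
Your approach is correct, and since the paper itself gives no proof of this lemma (it defers to Appendix A of the thesis, advertised only as ``counting in the symmetric group''), your dessin argument stands as a legitimate self-contained route; it is essentially the map-theoretic packaging of that symmetric-group count. I checked the substantive claims you leave as ``bookkeeping'' and they are right: the skeleton is forced as you say (no leaf--leaf edges by connectedness, hence a double edge between the two hubs), the face-degree equation gives $u+v=n-1$ with $v\le b-2$ automatic from $b\ge n+1$, and $u$ ranges over $\{0,\dots,\min(a-2,n-1)\}$, i.e.\ $\min(a-1,n)$ classes. For the rigidity: an orientation-preserving automorphism fixes both hubs and acts on the darts at the black hub by a rotation preserving the pair of hub--hub darts; a nontrivial such rotation must swap them and hence swap the two faces, which is impossible when $d\neq 2n$, and since an automorphism of a connected dessin fixing a dart is trivial, every class is rigid in that case. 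When $d=2n$ the face-swapping involution identifies $u$ with $a-2-u$, exists as an automorphism exactly when $u=a-2-u$ (so $a$ even, and then $v=(b-2)/2$ automatically), and $|\Aut|\le 2$ always, so the weighted count is $\tfrac12(a-1)=\tfrac12\min(a-1,n)$ using $a\le n+1$; this matches your parity bookkeeping, including the boundary case $a=2$ (no black-hub leaves, one class with $|\Aut|=2$).

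One point you should make explicit rather than leave implicit: under the paper's stated convention, $H$ without superscript is the possibly-disconnected count, and with that literal reading the lemma is false at $n=1$ --- for instance in degree $4$ with $a=b=3$ one gets $H((3,1),(3,1),(3,1))=\tfrac43$ from the eight intransitive pairs $(s_2,s_2)$, versus the connected value $1$ claimed by the formula. So your decision to count only transitive factorizations is not a stylistic choice but the necessary (and clearly intended) reading, consistent with how the lemma is used in Theorems~\ref{thm:reduceb} and \ref{thm:reducea}, where these numbers are contributions of single (hence connected) vertices of an admissible cover. Your observation that for $n\ge 2$ transitivity is automatic is correct, though in the intransitive analysis you should also dispose of the subcase where both cycles are supported in the same $\gamma_0$-cycle (then the other cycle of $\gamma_0$ would consist of common fixed points, contradicting that it has length at least $2$); the intransitive factorizations at $n=1$ are exactly the genus-one factorizations supported on $d-1$ letters, and your dessin count excludes them by construction.
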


\bibliographystyle{plain}
\bibliography{sources.bib}

\end{document}